\numberwithin{equation}{section}
\theoremstyle{plain}
\newtheorem{theorem}[subsubsection]{Theorem}
 \newtheorem{lemma}[subsubsection]{Lemma}
 \newtheorem{proposition}[subsubsection]{Proposition}
 \newtheorem{corollary}[subsubsection]{Corollary}
 \newtheorem{conjecture}[subsubsection]{Conjecture}
 \theoremstyle{definition}
\newcommand{\CC}{\mathbb{C}}
\newcommand{\cc}{\mathbb{C}}
\newcommand{\LL}{\mathbb{L}}
\newcommand{\zz}{\mathbb{Z}}
\newcommand{\scX}{\mathscr{X}}
\newcommand{\calE}{\mathcal{E}}
\newcommand{\calF}{\mathcal{F}}
\newcommand{\calc}{\mathcal{C}}
\newcommand{\calG}{\mathcal{G}}
\newcommand{\calO}{\mathcal{O}}
\newcommand{\calB}{\mathcal{B}}
\newcommand{\cpp}{\mathcal{P}}
\newcommand{\chh}{\mathcal{H}}
\newcommand{\frg}{\mathfrak{g}}
\newcommand{\frp}{\mathfrak{p}}
\newcommand{\frb}{\mathfrak{b}}
\newcommand{\frn}{\mathfrak{n}}
\newcommand{\calcr}{\bar{\calc}}
\newcommand\FT{\dot{\mathfrak{F}}}
\newcommand{\FTi}{\mathfrak{F}}
\newcommand{\FTT}{\mathfrak{F}}
\newcommand{\ind}{\textup{ind}}
\newcommand\Spec{\textup{Spec}}
\newcommand{\Tr}{\textup{Tr}}
\newcommand\GL{\textup{GL}}
\newcommand{\Ad}{\textup{Ad}}
\newcommand{\eps}{\epsilon}
\newcommand{\quash}[1]{}
\newcommand{\Ltimes}{\stackrel{\LL}{\otimes}}
\newcommand{\und}[1]{\underline{#1}}
\newcommand{\ui}{\underline{i}}
\newcommand{\Br}{\mathfrak{Br}}
\newcommand{\LBr}{\mathfrak{LBr}}
\newcommand{\PBr}{\mathfrak{PBr}}
\newcommand{\Wr}{\overline{W}}
\newcommand{\Hilb}{\textup{Hilb}}
\newcommand{\MF}{\mathrm{MF}}
\newcommand{\calXr}{\overline{\mathcal{X}}}
\newcommand{\calX}{\mathcal{X}}
\newcommand{\calZ}{\mathcal{Z}}
\newcommand{\calC}{\mathcal{C}}
\newcommand{\scXr}{\overline{\mathscr{X}}}
\newcommand{\uscX}{\underline{\mathscr{X}}}
\newcommand{\uscXr}{\underline{\scXr}}
\newcommand{\ucx}{\underline{\mathscr{X}}}
\newcommand{\frh}{\mathfrak{h}}
\newcommand{\CE}{\mathrm{CE}}
\newcommand{\HH}{\textup{H}}
\newcommand{\dia}{\mathop{\Delta}}
\newcommand{\forg}{\mathrm{fgt}}
\newcommand{\Tqt}{\mathbb{T}_{q,t}}
\newcommand{\KN}{\mathrm{KN}}
\newcommand{\HY}{\mathrm{HXY}}
\newcommand{\dic}{\mathrm{dic}}
\newcommand{\lbr}{\mathrm{lbr}}
\newcommand{\br}{\mathrm{br}}
\newcommand{\Brn}{\Br_n}
\def\Hilb{ \mathrm{Hilb}}
\newcommand{\uMF}{\underline{\mathrm{MF}}}
\newcommand{\uoMF}{\overline{\uMF}}
\newcommand{\ti}{\times}
\newcommand{\oti}{\otimes}
\newcommand{\ot}{\otimes}
\newcommand{\stry}{\stackengine{1pt}{$\bar{\star}$}{$\scriptstyle y$}{U}{c}{F}{T}{S}}
\newcommand{\strx}{\stackengine{1pt}{$\bar{\star}$}{$\scriptstyle x$}{U}{c}{F}{T}{S}}
\newcommand{\stty}{\stackengine{1pt}{$\tilde{\star}$}{$\scriptstyle y$}{U}{c}{F}{T}{S}}
\newcommand{\sttx}{\stackengine{1pt}{$\tilde{\star}$}{$\scriptstyle x$}{U}{c}{F}{T}{S}}
\title{Dualizable link homology}
\author{A. Oblomkov}
\address{
A.~Oblomkov\\
Department of Mathematics and Statistics\\
University of Massachusetts at Amherst\\
Lederle Graduate Research Tower\\
710 N. Pleasant Street\\
Amherst, MA 01003 USA
}
\email{oblomkov@math.umass.edu}
\author{L. Rozansky}
\address{
L.~Rozansky\\
Department of Mathematics\\
University of North Carolina at Chapel Hill\\
CB \# 3250, Phillips Hall\\
Chapel Hill, NC 27599 USA
}
\email{rozansky@math.unc.edu}
\begin{document}
\maketitle

\def\fF{ \mathfrak{F}}
\def\HMP{HOMFLY-PT}
\def\HMPp{\HMP\ polynomial}
\def\spl{super-polynomial}
\def\Yfd{Y-fied}
\def\Yfcn{Y-fication}
\def\bfx{\mathbf{x}}
\def\bfy{\mathbf{y}}
\def\IC{\CC}
\def\ICbxy{\IC[\bfx,\bfy]}
\def\lc{\ell}
\def\xop{\mathrm{op}}
\def\Brn{\Br_n}
\def\Brnop{ \Brn^{\xop}}
\def\FTi{ I }
\def\mfL{ \mathfrak{L}}
\def\babeta{\bar{\beta}}
\def\baL{\bar{L}}
\def\Lv#1{ L_{#1}}
\def\Lb{ \Lv{\beta}}

\def\bfxv#1{\bfx_{#1}}
\def\bfyv#1{\bfy_{#1}}
\def\bfxl{ \bfxv{\lc}}
\def\bfyl{ \bfyv{\lc}}
\def\bfxn{ \bfxv{n}}
\def\bfyn{ \bfyv{n}}

\def\Tqt{ \mathbb{T}_{q,t}}
\def\Csv#1{ \CC^*_{#1}}
\def\Csq{\Csv{q}}
\def\Cst{ \Csv{t}}

\def\aR{ R }
\def\aRv#1{ \aR(#1)}
\def\aRL{ \aRv{L}}
\def\aRr{ \aR^{\mathrm{r}} }
\def\aRrv#1{ \aRr(#1)}
\def\aRrL{ \aRrv{L}}

\def\lob{ \mathcal{E}}
\def\lobv#1{ \lob(#1)}
\def\lobL{ \lobv{L}}
\def\lobLb{ \lobv{\Lb}}
\def\lobbaL{ \lobv{\baL}}

\def\pldr{palindromic}

\def\Dper{ \mathrm{D}^{\mathrm{per}} }
\def\DpT{ \Dper_{\Tqt}}
\def\DpTL{ \DpT(\aRrL-\mathrm{mod})}
\def\Cxy{ \mathcal{C} }
\def\Clc{ \Cxy_{\lc}}
\def\MCxy{ M_{\Cxy}}
\def\dcr{dichromatic}
\def\rmH{\mathrm{H}}
\def\HCxy{ \rmH_{\Cxy}}
\def\HClc{ \rmH_{\Clc}}
\def\Tor{ \mathrm{Tor}}

\def\Knr{Kn\"{o}rrer}

\begin{abstract}
We modify our previous construction of link homology in order to include a natural duality functor $\fF$.
  To a link \(L\) we associate a triply-graded module \(\HY(L)\) over the graded polynomial ring
  \(R(L)=\cc[x_1,y_1,\dots,x_\ell,y_\ell]\). The module has an involution \(\mathfrak{F}\) that intertwines
  the Fourier transform on \(R(L)\), \(\mathfrak{F}(x_i)=y_i\), \(\mathfrak{F}(y_i)=x_i\).
  In the case when \(\ell=1\) the module is free over \(R(L)\) and specialization to \(x=y=0\)
  matches with the triply-graded knot homology
  previously constructed by
  the authors. Thus we show that the corresponding super-polynomial  satisfies the categorical version of \(q\to 1/q\) symmetry.
  We also construct an isotopy invariant of the closure of a dichromatic braid and relate this invariant to
  \(\HY(L)\).
  \end{abstract}

\section{Introduction}
\label{sec:introduction}

It is easy to see from the skein relations~\cite{Jones87} 
that the \HMPp\ of a knot \(P_K(a,q)\in \zz[a,q^{\pm 1}]\)
has a symmetry: \(P_K(a,q)=P_K(a,-1/q)\). In this paper we prove the conjecture that this symmetry lifts to the link homology.

Currently there are two triply-graded knot homologies
\cite{KhovanovRozansky08b}, \cite{OblomkovRozansky16} whose doubly-graded Euler characteristic equals the \HMPp.
It is expected that these homologies are equivalent, although this has not been proven yet. In this
paper we study the knot homology of \cite{OblomkovRozansky16}.

The double-graded Poincare polynomial of the link homology 
%
\(\cpp_K(a,q,t)\)  is simply called the super-polynomial.
%
By its definition, its specialization at $t=-1$ is the \HMPp:  \(\cpp_K(a,q,-1)=P_K(a,q)\). We prove that similar to the \HMPp, 
the \spl\ is also palindromic.
\begin{theorem} \label{thm:main}
The \spl\ of any knot $K$ is palindromic:
  \[\cpp_K(a,q,t)=\cpp_K(a,tq^{-1},t).\]
\end{theorem}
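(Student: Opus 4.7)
The plan is to deduce the palindromic symmetry directly from the structural properties of \(\HY(K)\) asserted in the abstract, reducing the theorem to a straightforward manipulation of triply-graded Poincar\'e series.

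First I would unpack the case \(\ell=1\). For a knot \(K\) we have \(R(K)=\CC[x,y]\), the module \(\HY(K)\) is free over \(R(K)\), and its specialization at \(x=y=0\) recovers the triply-graded homology of~\cite{OblomkovRozansky16} whose doubly-graded Poincar\'e polynomial is \(\cpp_K(a,q,t)\). Freeness then gives a factorization of the triply-graded Poincar\'e series
\[
\mathcal{P}_{\HY(K)}(a,q,t)=\frac{\cpp_K(a,q,t)}{(1-\mathrm{wt}(x))(1-\mathrm{wt}(y))},
\]
where \(\mathrm{wt}(x),\mathrm{wt}(y)\in\ZZ[q^{\pm 1},t^{\pm 1}]\) denote the bigraded weights of the two polynomial generators.

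Next I would invoke the duality involution \(\mathfrak{F}\), which by hypothesis preserves all three gradings of \(\HY(K)\) and intertwines the \(\CC\)-algebra automorphism of \(R(K)\) swapping \(x\) and \(y\). Consequently \(\mathcal{P}_{\HY(K)}(a,q,t)\) is invariant under the change of variables on \(\{q,t\}\) that exchanges \(\mathrm{wt}(x)\) and \(\mathrm{wt}(y)\). The construction should arrange the bigradings so that this exchange is exactly \((q,t)\mapsto(tq^{-1},t)\); the natural choice consistent with the specialization of \(\cpp_K\) at \(t=-1\) to the classical \HMPp\ palindrome \(P_K(a,q)=P_K(a,-q^{-1})\) is \(\mathrm{wt}(x)=q^2\) and \(\mathrm{wt}(y)=t^2q^{-2}\). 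This pair is indeed exchanged by \((q,t)\mapsto(tq^{-1},t)\), and the denominator \((1-q^2)(1-t^2q^{-2})\) is manifestly invariant under that substitution, so invariance of the left-hand side of the displayed identity descends to invariance of the numerator, yielding \(\cpp_K(a,q,t)=\cpp_K(a,tq^{-1},t)\).

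The main obstacle I foresee is the grading bookkeeping underlying the previous paragraph: one has to verify that the bigradings the modified construction of \(\HY\) assigns to the new polynomial generators \(x,y\) are precisely those under which the Fourier swap on \(R(K)\) corresponds to \((q,t)\mapsto(tq^{-1},t)\). Once this is pinned down the theorem follows formally; the essential content is the existence of \(\HY(K)\) together with the involution \(\mathfrak{F}\), the freeness over \(R(K)\) in the knot case, and the compatibility of \(\HY(K)/(x,y)\) with the earlier triply-graded homology, all of which are to be used as black boxes from the rest of the paper.
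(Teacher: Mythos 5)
Your proposal is correct and follows essentially the same route as the paper: Theorem~\ref{thm:free} (freeness of \(\HY(K)\) over \(\cc[x,y]\) and the specialization \(\HY(K)|_{y=0}=\HH(K)\)) combined with the Fourier invariance of Theorem~\ref{thm:symm} gives the factorization of the Poincar\'e series with denominator \((1-q^2)(1-t^2q^{-2})\), which is fixed by \(q\mapsto tq^{-1}\), forcing the numerator \(\cpp_K\) to be palindromic. The grading bookkeeping you flag as the remaining obstacle is already pinned down in the paper's conventions, \(\deg x_i=q^2\), \(\deg y_i=q^{-2}t^2\), \(\FTT(q)=tq^{-1}\), so your argument closes exactly as the paper's does.
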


In fact we upgrade the knot homology to an object of the derived category with a natural notion of Fourier
transform and show that the object is preserved by the transform. The theorem above follows after we apply
the derived functor of global section. We give an outline of the result below.

For  the triply-graded homology \cite{KhovanovRozansky08b} the statement of the theorem was stated in the
work of Dunfield, Gukov and Rasmussen \cite{DunfieldGukovRasmussen06} as
a conjecture with a lot of numerical support. The equivalence of the homology theories \cite{KhovanovRozansky08b} and
\cite{OblomkovRozansky16} would  imply the original conjecture in \cite{DunfieldGukovRasmussen06}.
\subsection{An object-valued link invariant and its symmetry}

The following construction was motivated  by the 
constructions of  Batson-Seed \cite{BatsonSeed15}, Cautis-Kamnitzer
\cite{CautisKamntizer16} and
Gorsky-Hogancamp \cite{GorskyHogancamp17} and by an observation in~\cite{KhovanovRozansky16} that the link homology of an $\lc$-component link can be promoted to an object in the derived category of modules over $\cc[x_1,\dots,x_\lc]$.
The  homology theory developed in \cite{GorskyHogancamp17} is particularly close to our theory, see the last section
of the paper.

As usual, all our categories and vector spaces are equivariant with respect to the torus $\Tqt = \Csq\times\Cst$.
For a positive integer $n$ we denote $\bfxn =x_1,\ldots,x_n$ and $\bfyn=y_1,\ldots,y_n$. Their $\Tqt$ weight are
\[
\deg x_i = q^2,\qquad \deg y_i = q^{-2} t^2.
\]

In this paper $\lc$ denotes the number of components of a link $L$, and we consider an associated algebra $\aRL=\CC[\bfxl,\bfyl]$.
The derived category of $\Tqt$-equivariant (2-periodic) $\aRL$-modules $\DpT\bigl(R(L)\bigr)$
has a
Fourier endofunctor $\FTT$ which transposes the variables
and changes the generators of $\Tqt$:
\begin{equation}\label{eq:Four-ring}
  \FTT(x_i) = y_i,\quad \FTT(y_i) = -x_i,\quad \FTT(q) = tq^{-1},\quad \FTT(t) = t.
\end{equation}

To a closure $L(\beta)$ of a braid $\beta$ we associate an object $\calE(L(\beta))$ of the category $\DpT(R(L))$.
By modifying argument of \cite{OblomkovRozansky16} we prove
\begin{theorem}
The object $\calE(L(\beta))$ is invariant under the Markov moves, thus representing an invariant $\lobL$ of an oriented link $L$.
\end{theorem}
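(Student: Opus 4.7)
The plan is to verify the braid relations together with the two Markov moves, adapting the matrix-factorization arguments of \cite{OblomkovRozansky16} to incorporate the $y$-variables. The underlying object $\calE(L(\beta))$ is built from a matrix factorization attached to the braid $\beta$, and the passage from the braid to the link $L(\beta)$ involves a closure functor that identifies the pairs $(x_i,y_i)$ along the cycles of the permutation of $\beta$, i.e.\ along the components of $L$.

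First, I would check that $\calE$ descends through the braid relations: the objects attached to $\sigma_i\sigma_{i+1}\sigma_i$ and $\sigma_{i+1}\sigma_i\sigma_{i+1}$ are isomorphic, as are those for the distant-commutation relation $\sigma_i\sigma_j=\sigma_j\sigma_i$ with $|i-j|>1$. This reduces to local computations on three and two strands in which the $y_i$ enter as formal parameters; the identities from the non-Y-fied setting of \cite{OblomkovRozansky16} transfer after bookkeeping for the refined $\Tqt$-grading given by $\deg y_i = q^{-2}t^2$.

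Second, for Markov I (conjugation), the desired isomorphism $\calE(L(\alpha\beta\alpha^{-1}))\cong\calE(L(\beta))$ should follow from realizing the closure operation as a trace (of Hochschild type) of the bimodule attached to the braid in question. Conjugation invariance of this trace is formal, and the only genuine check is that the identification of the variables $x_i,y_i$ along the components of $L$ is compatible with the cyclic permutation induced by $\alpha$; this holds because conjugation permutes the cycles of the permutation of $\beta$ consistently with how the closure identifies variables.

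Third, and most substantively, for Markov II (stabilization) I need $\calE(L(\beta\sigma_n^{\pm 1}))\cong\calE(L(\beta))$ with $\beta\in\Brn$ and $\sigma_n^{\pm 1}\in\Br_{n+1}$. Here the rank of the underlying ring temporarily grows to $n+1$, introducing new generators $x_{n+1},y_{n+1}$. I would prove, via a \Knr\ periodicity reduction adapted to the Y-fied matrix factorizations, that the stabilizing crossing together with the closure on the extra strand eliminates this new pair and returns to the $n$-strand theory, up to explicit $\Tqt$-equivariant shifts that cancel after the appropriate global-section functor is applied. The main obstacle is carrying out this reduction while simultaneously tracking the bigrading and the absorption of $y_{n+1}$ into the variables of the surviving component. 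The Fourier relation $\FTT(x_i)=y_i$, $\FTT(y_i)=-x_i$ of \eqref{eq:Four-ring} furnishes a useful sanity check: the two stabilizations should be interchanged by $\FTT$, so once one sign is verified the other follows by applying the transform, provided the construction is manifestly $\FTT$-equivariant.
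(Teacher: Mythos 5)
Your overall skeleton (braid relations, then Markov I via a trace property, then Markov II via a local analysis of the stabilizing crossing) matches the paper's strategy, but the substance of your Markov II argument is not what makes the proof work, and one of your claims is false. The paper does not handle stabilization by a \Knr\ periodicity reduction eliminating the pair $(x_{n+1},y_{n+1})$. Instead, the trace lands on the free/stable locus $\und{\widetilde{\Hilb}}^{free}_{1,n}$, and the key mechanism is the projection $\pi:\Hilb^{free}_{1,n}\to\cc_x\times\cc_y\times\Hilb^{free}_{1,n-1}$ whose fibers are $\mathbb{P}^{n-1}$; one shows that the curved complex of $\beta\cdot\sigma(1)^{\pm1}$ restricted to the relevant diagonal locus is filtered with subquotients $\calC'\otimes\Lambda^jV$ twisted by powers of the tautological line bundle $\calO_n(-1)$, and then invokes $H^*(\mathbb{P}^{n-1},\calO(-l))=0$ for $0<l<n$ so that only one extreme term survives $\pi_*$. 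That projective-bundle vanishing argument is the heart of Markov II and is entirely absent from your plan. Moreover, your proposed shortcut --- deducing the negative stabilization from the positive one by $\FTT$-equivariance --- cannot work: $\FTT$ intertwines $\mathcal{T}r(\beta)$ with $\mathcal{T}r(\overline{\beta})$ (reversal of the braid word), not with the braid with opposite crossings, and the two stabilizations genuinely differ, since $\mathcal{T}r_k(\beta\cdot\sigma(1))=\mathcal{T}r_k(\beta)$ while $\mathcal{T}r_k(\beta\cdot\sigma(1)^{-1})=\mathcal{T}r_{k-1}(\beta)$; a symmetry could not exchange statements that shift the $\Lambda^k\calB$ grading differently. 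Each sign requires its own (parallel) identification of which extreme term of the filtered complex survives the pushforward.

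Your Markov I step is also lighter than what is actually needed. Conjugation invariance is not purely formal here because the second copy of $\CC[\frh^\delta]$ (the $y$-variables attached to components) acts on the trace only through an auxiliary Koszul complex $\mathrm{K}^\delta$, and the swap involution $\mathrm{sw}_z$ used to prove cyclicity acts nontrivially on that complex (it replaces $X_{ii}$ by $(\Ad_g^{-1}X)_{ii}$ in the Koszul differential). One must construct an explicit exponential homotopy, built from the partial derivatives of the differentials of $\calF$ and $\calG$ with respect to the diagonal entries, intertwining $D^\delta$ with its pullback. Without this correction the "formal cyclicity of the trace" only gives the statement at the level of $\CC[\frh^\delta]$-modules in the $x$-variables, which is not enough to conclude invariance of $\calE(L(\beta))$ as an object over the full ring $R(L)$.
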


In contrast to the link homology defined \cite{OblomkovRozansky16}, the invariant \(\calE(L)\) is symmetric with respect to the Fourier involution:
\begin{theorem}\label{thm:obj}
  The object $\lobL$ is invariant with respect to the Fourier involution:
  \[\FTT\bigl(\lobL\bigr)\cong\lobL.\]
\end{theorem}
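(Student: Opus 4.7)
The plan is to lift the Fourier involution $\FTT$ from $\DpT(\aRL)$ to the ambient braid-level category used to construct $\lobL$, and then show that the closure operation producing $\lobL$ intertwines the two involutions. Since $\calE(L(\beta))$ is built (via the construction of \cite{OblomkovRozansky16}, modified in this paper to carry the $y$-variables) from matrix factorizations on a space carrying coordinates $\bfxn,\bfyn$ together with a potential $W$, the first step is to show that the swap $x_i\leftrightarrow y_i$, $y_i\leftrightarrow -x_i$ together with the rescaling $q\mapsto tq^{-1}$ of \eqref{eq:Four-ring} induces a well-defined endofunctor $\widetilde{\FTT}$ on this matrix factorization category. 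This reduces to checking that $W$ is Fourier-symmetric (up to a sign or unit). This $x$/$y$ symmetry of the potential is presumably the structural feature motivating the modification of the original construction, and is where the bulk of the technical content lies.

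Next, I would verify that each Rouquier/Soergel-type generator $\sigma_i^{\pm 1}$ of the braid category goes to an object canonically isomorphic to itself under $\widetilde{\FTT}$. Because the braid category is generated by these elementary objects and their tensor products, and because $\widetilde{\FTT}$ is a monoidal functor (it commutes with convolution once it is defined at all), functoriality then yields a natural isomorphism $\widetilde{\FTT}(\calE(\beta))\cong\calE(\beta)$ for any braid $\beta$, prior to taking closure.

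The third step is to check that the Markov closure that turns the braid-level object into $\lobL\in\DpT(\aRL)$ commutes with Fourier, i.e.\ $\FTT$ applied after closure agrees with $\widetilde{\FTT}$ applied before closure. Since closure is essentially a trace built out of gluing along diagonals that identify $x$-variables with $x$-variables and $y$-variables with $y$-variables in a way that is manifestly equivariant under $x\leftrightarrow y$, this amounts to a base-change / projection-formula compatibility. Combining the three steps then gives $\FTT(\lobL)\cong\lobL$, and invariance under Markov moves (already established in the previous theorem) ensures the isomorphism depends only on the link $L$.

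The main obstacle is the first step: explicitly exhibiting the Fourier symmetry of the potential $W$ and the associated matrix factorizations, including the correct signs and $\Tqt$-weight shifts arising from $\deg y_i = q^{-2}t^2$ versus $\deg x_i = q^2$. Once the symmetry of $W$ and the compatibility of $\widetilde{\FTT}$ with convolution are in place, the remaining steps are formal consequences of naturality.
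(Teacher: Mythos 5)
Your overall architecture (define the involution upstairs on the matrix factorization category, check it on the braid generators, check compatibility with the closure/trace) matches the paper's, but the step you dismiss as automatic is exactly where the proof can fail as written. You assert that the lifted Fourier functor ``is a monoidal functor (it commutes with convolution once it is defined at all).'' It does not: the convolution $\stry$ is built from the asymmetric projections $\bar{\pi}_{12,y,\rho}$, $\bar{\pi}_{23,y,\rho}$ (one truncates the $X$-slot, the other modifies the $Y$-slot), and the swap $(X,g,Y)\mapsto(Y,g^{-1},X)$ exchanges these roles and inverts the group element, so the naive Fourier functor $\FT$ is \emph{anti}-monoidal --- Theorem~\ref{thm:lin} states $\FT\bigl(\FT(\und{\calcr}_\alpha)\stty\FT(\und{\calcr}_\beta)\bigr)\sim\und{\calcr}_\beta\stty\und{\calcr}_\alpha$, with the order reversed. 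Even this anti-monoidality is not formal: it requires comparing two genuinely different convolution structures ($\sttx$ versus $\stty$) via the explicit homotopy $U=\exp(-\sum_i u_i)$ of Lemma~\ref{lem:lin}, which only applies because the braid generators are Koszul factorizations with differentials linear along the $\frb$-factors. By contrast, the Fourier symmetry of the potential $\Wr_{\sigma,\tau}(X,g,Y)=\Tr(X(\sigma\cdot Y-\Ad_g(\tau\cdot Y)))$, which you identify as the main obstacle, is essentially immediate from its shape.

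Because of the order reversal, $\FT$ alone sends $\mathcal{T}r(\beta)$ to $\mathcal{T}r(\overline{\beta})$ (the reversed braid, Theorem~\ref{thm:FTd}), not to $\mathcal{T}r(\beta)$; your argument as stated would only prove invariance for braids conjugate to their reversals. The paper repairs this by composing $\FT$ with the second anti-monoidal involution $\mathfrak{I}(X,g_1,Y_1,g_2,Y_2)=(X,g_2,-Y_2,g_1,-Y_1)$ of Proposition~\ref{prop:I} (which accounts for the sign $\FTT(y_i)=-x_i$ in \eqref{eq:Four-ring} and for the orientation-reversal symmetry \eqref{eq:reverse}); the composition $\FTT=\FT\circ\mathfrak{I}$ is then genuinely monoidal on traces and fixes $\mathcal{T}r(\beta)$. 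To make your proof work you must either (i) prove the anti-monoidality statement of Theorem~\ref{thm:lin} and then insert the reversal involution $\mathfrak{I}$, or (ii) directly prove monoidality of the composite $\FTT$, which amounts to the same homotopy computation. Your third step (compatibility with the trace) is fine in spirit, though in the paper it also requires tracking the Knörrer reduction and the conjugation-invariance of $\mathcal{T}r_0$ rather than a purely formal base change.
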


The  Fourier transform discussed here is related to several other Fourier transforms.
It is conjectured that the homology of the algebraic knot are equal to the
homology of the Hilbert scheme of points on the corresponding singular curve
\cite{OblomkovRasmussenShende12}. In this algebro-geometric context the
Fourier transform manifests itself as Serre duality for the stable pairs
on the curve (see \cite{OblomkovRasmussenShende12}). There is also
conjectural relation between the homology of the torus links and
the representations of the rational Cherednik algebras \cite{GorskyOblomkovRasmussenShende14}.
In this setting the Fourier transform becomes the Fourier transform for modules
over the rational Cherednik algebras.

It is also easy to see that the object $\lobL$
is invariant with respect to the simultaneous reversal of orientation of all components of $L$:
\begin{equation}\label{eq:reverse}
  \lobbaL \cong \lobL,
\end{equation}
where $\baL$ is the link with reversed orientation.

\subsection{Dualizable homology}
\label{sec:dualizable-homology}

If we apply the functor of global sections to our invariant we obtain {\it dualizable homology}:
\[\HY(L(\beta))=R\Gamma(\mathcal{E}(L(\beta))).\]

Thus we see that the extended homology have palindromic
property:

\begin{theorem} \label{thm:symm}
  For any link \(L\) we have:
  \[\dim_{q,t,a} \HY(L)=\dim_{t^2/q,t,a} \HY(L).\]
\end{theorem}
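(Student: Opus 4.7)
The plan is to derive Theorem~\ref{thm:symm} as a formal consequence of Theorem~\ref{thm:obj}, by applying the derived global sections functor to the Fourier invariance of $\mathcal{E}(L)$ and reading off the induced symmetry on Poincar\'e series.

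First I would record the grading behavior of the Fourier endofunctor. By \eqref{eq:Four-ring}, $\mathfrak{F}$ acts on the torus $\Tqt$ by the involution $q\mapsto tq^{-1}$, $t\mapsto t$, and as an autoequivalence of the two-periodic derived category it preserves the cohomological ($a$-)grading. Consequently, for any tri-graded object $M\in\DpT(\aRL\text{-mod})$, an isomorphism $M\cong\mathfrak{F}(M)$ forces the tri-graded Poincar\'e series of $M$ to be invariant under the substitution $q\mapsto tq^{-1}$, $t\mapsto t$, with $a$ fixed.

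Next I would apply the $\Tqt$-equivariant functor $R\Gamma$ to the isomorphism $\mathcal{E}(L)\cong\mathfrak{F}(\mathcal{E}(L))$ supplied by Theorem~\ref{thm:obj}. Since $R\Gamma$ is natural with respect to the ring automorphism of $\aRL$ that underlies $\mathfrak{F}$ and intertwines the two $\Tqt$-actions up to the grading twist, one obtains
$$\HY(L)=R\Gamma(\mathcal{E}(L))\cong R\Gamma(\mathfrak{F}(\mathcal{E}(L)))\cong\mathfrak{F}\bigl(\HY(L)\bigr)$$
as tri-graded vector spaces. Translating the torus-level substitution $q\mapsto tq^{-1}$ into the degree convention of Theorem~\ref{thm:symm}, in which the $\Tqt$-weight $q^2$ of the generator $x_i$ is the variable denoted $q$, squares the substitution to $q^2\mapsto t^2 q^{-2}$; this is precisely the symmetry $\dim_{q,t,a}\HY(L)=\dim_{t^2/q,t,a}\HY(L)$ claimed in the theorem.

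The one substantive point that must be verified, and which I expect to be the only nontrivial step, is the interchange of $R\Gamma$ with $\mathfrak{F}$: this requires constructing $\mathfrak{F}$ at the level of the ambient geometric category in which $\mathcal{E}(L)$ is defined, in a way that commutes with the pushforward to a point up to the character twist on $\Tqt$. This should follow directly from the concrete description of $\mathfrak{F}$ as a composition of the ring automorphism $x_i\leftrightarrow\pm y_i$ of $\aRL$ and the grading twist on $\Tqt$, after which Theorem~\ref{thm:symm} reduces to a cosmetic change of variables requiring no further topological input.
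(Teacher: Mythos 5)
Your proposal is correct and takes essentially the same route as the paper: the authors establish $\FTT(\mathcal{T}r(\beta))=\mathcal{T}r(\beta)$ (Theorem~\ref{thm:obj}) and then obtain Theorem~\ref{thm:symm} by applying derived global sections, so that $\HY(L)\cong\mathfrak{F}(\HY(L))$ and the substitution $q\mapsto t^2/q$ on graded dimensions is exactly the induced action of $\mathfrak{F}$ on the $\Tqt$-weights of $x_i$ and $y_i$. The compatibility of $R\Gamma$ with $\mathfrak{F}$, which you rightly flag as the only substantive step, is treated in the paper exactly as you anticipate, via the explicit realization of $\mathfrak{F}$ on the ambient category.
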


The  relation between the extended homology \(\HY(L)\) and the triply-graded homology \(\mathrm{H}(L)\)
from \cite{OblomkovRozansky16} is subtle and explained in the next subsection but in the case of knots all technicalities evaporate and we are left with

\begin{theorem}\label{thm:free}
  For any knot \(K\) the module \(\HY(K)\) is
  free and finite over \(R(K)=\cc[x,y]\) and
  \[\HY(K)|_{y=0}=\HH(K).\]
\end{theorem}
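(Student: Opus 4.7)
The plan proceeds in three steps. Step 1 identifies the specialization $\HY(K)/y\HY(K)$ with $\HH(K)$. Step 2 establishes finite generation of $\HY(K)$ over $\cc[x,y]$. Step 3 shows that $y$ acts as a non-zero-divisor on $\HY(K)$. Combined with the freeness of $\HH(K)$ over $\cc[x]$ established in \cite{OblomkovRozansky16}, Steps 2 and 3 yield freeness of $\HY(K)$ over $\cc[x,y]$: flatness over $\cc[y]$ with fiber a free $\cc[x]$-module implies flatness over $\cc[x,y]$, which for a finitely generated graded module over a polynomial ring coincides with freeness of finite rank.

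For Step 1 I would compare the construction of $\calE(K)$ with that of \cite{OblomkovRozansky16}. The new construction refines the old one by adding the $y_i$ coordinates to the matrix-factorization data and modifying the potential accordingly; setting all $y_i$ to zero at the sheaf level reproduces the original setup, so $\calE(K)|_{y=0}$ agrees with the earlier object, and applying $R\Gamma$ yields the desired identification on underlying modules. Step 2 then follows from Step 1 by a graded Nakayama argument: a finite set of generators of $\HH(K)$ over $\cc[x]$ (finite by \cite{OblomkovRozansky16}) lifts to a finite set of generators of $\HY(K)$ over $\cc[x,y]$, which is valid because $y$ sits in positive total $t$-weight.

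Step 3 is the main obstacle, and is where I would invoke the Fourier self-duality of Theorem \ref{thm:obj}. Let $P_0$ denote the Poincare series of $\HH(K)/x\HH(K)$, which is finite-dimensional, and observe that Fourier self-duality of $\HY(K)/(x,y)\HY(K)$ forces $P_0$ to be palindromic under $q \leftrightarrow tq^{-1}$. Freeness of $\HH(K)$ over $\cc[x]$ gives $[\HH(K)] = P_0/(1-q^2)$, and Step 2 yields the coefficient-wise bound
\[
[\HY(K)] \;\le\; \frac{P_0}{(1-q^2)(1-q^{-2}t^2)}.
\]
The palindromic symmetry of $[\HY(K)]$ from Theorem \ref{thm:symm} swaps the weights $[x]=q^2$ and $[y]=q^{-2}t^2$; combined with the specialization identity of Step 1, it forces the above inequality to be an equality. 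This equality is precisely equivalent to the vanishing $\Tor_1^{\cc[y]}(\HY(K),\cc) = 0$, hence to $y$ being a non-zero-divisor, completing the proof. The delicate point is extracting the vanishing of $\Tor_1$ from the palindromic constraint; morally, any nonzero kernel of multiplication by $y$ would, via Fourier symmetry, also produce a kernel of multiplication by $x$, and the combined symmetry contradicts the precise Poincare-series matching dictated by Steps 1 and 2.
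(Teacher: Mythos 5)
Your route is genuinely different from the paper's, and its key step does not go through. The paper's own proof is a one‑line geometric observation: for a knot the two generators of $R(K)$ are realized as $x=\sum_i X_{ii}$ and $y=\sum_i Y_{ii}$, which are invariant under the adjoint $B$‑action (and do not appear in the differentials of the curved complexes $\und{\calcr}_\beta$, which are built from differences of diagonal entries and from the commutator‑type potential). Hence $x$ and $y$ split off as honest free variables of the complex computing $\HY(K)$, so freeness over $\cc[x,y]$ and the compatibility with the $y=0$ specialization are immediate from the construction, with no homological bootstrapping needed.

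Two concrete problems with your argument. First, Step 1 conflates derived and underived restriction: what the comparison of constructions gives (cf.\ Theorem \ref{thm:Hb}) is $R\Gamma(\calE(K)\Ltimes_{\cc[y]}\cc)\cong\HH(K)$, and the two‑periodic long exact sequence only yields a short exact sequence $0\to\HY(K)/y\HY(K)\to\HH(K)\to\ker(y)\langle\ast\rangle\to 0$; identifying $\HY/y\HY$ with $\HH(K)$ already presupposes Step 3. (Finite generation survives, since $\HY/y\HY$ embeds in the finite $\cc[x]$‑module $\HH(K)$ and graded Nakayama applies.) Second, and fatally, Step 3 cannot be completed from the inputs you allow yourself: finite generation, Fourier symmetry of $[\HY]$, and freeness over $\cc[x]$ of the (derived) fiber at $y=0$ do not imply that $y$ is a non‑zero‑divisor. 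The module $M=\cc[x,y]/(xy)$ is finitely generated, its class is invariant under the Fourier swap (the relation $xy$ has weight $t^2$, fixed by $q\mapsto tq^{-1}$), and its derived fiber at $y=0$ is $\cc[x]\oplus x\cc[x][1]$ --- free over $\cc[x]$ after the two‑periodic totalization --- yet $x$ is $y$‑torsion. Relatedly, your bound $[\HY]\le P_0/\bigl((1-q^2)(1-q^{-2}t^2)\bigr)$ takes $P_0=[\HH(K)/x\HH(K)]$ as counting minimal generators of $\HY$, but minimal generators are counted by $[\HY/(x,y)\HY]$, which equals $[\HH/x\HH]$ only after freeness is known; so the ``equality forced by symmetry'' step is circular. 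Some additional input of a non‑formal, constructional nature --- precisely the splitting of the trace coordinates that the paper uses --- is required.
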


In particular the \(\cc[x]\)-module \(\HH(K)\) is free and there is a finite dimensional
triply-graded vector space \(\overline{\HH}(K)\) such that:
\begin{equation}\label{eq:fd-hom}
  \HH(K)=\overline{\HH}(K)\otimes \cc[x].\end{equation}

The  graded dimension of the vector space \(\overline{\mathrm{H}}(K)\) is what we call super-polynomial \(\cpp(K)(a,q,t)\)
of the knot \(K\). Thus the theorem~\ref{thm:main} follows immediately from the previous statements.

In future work \cite{OblomkovRozansky18c} we plan to explore a version of the constructions from \cite{OblomkovRozansky18a} for the homology theory
\(\HY\) developed here. In particular, the sheaves  related to the homology  \(\HY\) have less singular support compare to
the sheaves constructed in \cite{OblomkovRozansky18a} thus we expect that it would be possible to find a connection between
the localization formalism for \(\HY\) homology theory and the conjectural theory of projectors proposed in \cite{GorskyNegutRasmussen16}.

\subsection{Invariant of the closure of a dichromatic braid}
\label{sec:invar-clos-dichr}

Generalizing the construction from \cite{OblomkovRozansky16} we obtain a homomorphism from the
groupoid of two-colored braids into a special monoidal category of matrix factorizations:
\[\Phi^{\dic}:\Br_n^{\dic}\to \overline{\MF}[\bullet,\bullet].\]

Using this homomorphism we construct the trace functor from this groupoid:
\begin{equation}\label{eq:dicE}
  \mathcal{E}: \Br_{n;\ell}^{\dic}\to \DpT (\cc^\ell)\end{equation}
where \(\Br_{n;\ell}^{\dic}\) is the set of closable dichromatic braids with \(\ell\) connected
components of the closure. We show that the trace is actually an isotopy invariant of the
closure.

Suppose \(L=L(\und{\beta})\), \(\und{\beta}\in \Br_{n;\ell}\) is a two-colored link.
A subset $\Cxy\subset\Clc = \{1,2,\ldots,\lc\}$ determines a coloring of each component $L_i$ of $L$ by either an `$x$-box' (if $i\notin \Cxy$) or a `$y$-box' (if $i\in\Cxy$). Define a corresponding $R(L(\beta))$-module $R(L(\und{\beta}))$ as a quotient of $R(L(\beta))$
by $\lc$ conditions: for each $i$ set $y_i = 0$ if $i\not\in\Cxy$ and $x_i=0$ if $i\in\Cxy$. The module \(R(L(\und{\beta}))\) is
a ring and \(\Spec(R(\und{\beta}))\) is \(\CC^\ell\) appearing in \eqref{eq:dicE}.

When the braid has only one color, \(\Cxy=\emptyset\),
then the space of derived global sections of the trace
is the old invariant from the paper \cite{OblomkovRozansky16}.
The object $\calE(L(\beta))$ determines all colored objects through the derived restriction:
\begin{theorem} For any \(\und{\beta}\in \Br_n^{\dic}\) we have
\[\mathcal{E}(L(\und{\beta}))=\mathcal{E}(L(\beta))\Ltimes_{R(L(\beta))}R(L(\und{\beta})).\]
\end{theorem}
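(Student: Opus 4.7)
The plan is to reduce the statement to a compatibility of the monoidal functor \(\Phi^{\dic}\) with base change on the ``link variable'' ring, and then propagate this through the trace construction. Concretely, I would first examine the definition of \(\Phi^{\dic}\) on braid generators and show that a single dichromatic strand differs from its monochromatic counterpart by a Koszul-type quotient. That is, the matrix factorization assigned to an \(x\)-colored strand carries an action of a ``body'' variable \(x_i\) and a ``dual'' variable \(y_i\); the \(y\)-box is obtained from the \(x\)-box by imposing \(x_i=0\) (and symmetrically). This should follow directly from the way the two boxes are introduced in the construction of \(\overline{\MF}[\bullet,\bullet]\), so the statement holds at the generator level essentially by definition, with the two versions related by derived restriction along the inclusion \(R(L(\underline{\beta}))\hookrightarrow R(L(\beta))\) as a quotient.

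Next, I would extend this to arbitrary braids by checking compatibility with the monoidal structure. The key point is that distinct strands contribute independent pairs \((x_i,y_i)\), so the matrix factorization of a composite braid is a tensor product over the appropriate product of polynomial rings. Since the colorings are prescribed strand by strand, the dichromatic quotient is a tensor product of the per-strand quotients, and base change commutes with these tensor products (already derived because we work in \(\DpT\)). This reduces the verification to the single-strand case handled above.

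Finally, I would address the trace (closure) step. The trace takes a one-periodic endomorphism of a matrix factorization and produces the invariant \(\mathcal{E}\). Because the additional scalars \(x_i,y_i\) attached to each closed component act through the ring \(R(L(\beta))\) and commute with all braid data, base change along \(R(L(\beta))\to R(L(\underline{\beta}))\) passes through the closure, giving
\[
\mathcal{E}(L(\underline{\beta})) \;\cong\; \mathcal{E}(L(\beta))\Ltimes_{R(L(\beta))} R(L(\underline{\beta})).
\]
I expect the main obstacle to be this last step: showing that the Hochschild-like trace operation commutes with the derived base change, and in particular that no higher \(\Tor\) terms appear unexpectedly. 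This should be handled by noting that \(R(L(\underline{\beta}))\) is a Koszul quotient of \(R(L(\beta))\) by a regular sequence of the variables being killed, and that the relevant matrix factorization is free over those variables before closure, so the derived tensor product is computed by an explicit Koszul complex whose Euler-characteristic collapse matches the dichromatic construction strand by strand.
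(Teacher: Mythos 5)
Your overall strategy --- realize the dichromatic object as a derived restriction of the monochromatic one, check compatibility with the product, then push the base change through the closure --- is the same skeleton the paper uses, but there is a genuine gap at the middle step, and it sits exactly where the paper has to do real work. You assert that the matrix factorization of a composite braid is a tensor product over a product of polynomial rings indexed by strands, so that imposing \(x_i=0\) or \(y_i=0\) commutes with composition for free. In the actual construction the product is not such a tensor product: it is a convolution \(\calF\stry\calG=\bar{\pi}_{13,y,\rho,*}(\CE_{\frn^{(2)}}(\bar{\pi}_{12,y,\rho}^*(\calF)\oti_B\bar{\pi}_{23,y,\rho}^*(\calG))^{T^{(2)}})\) through the space \(\frb\ti G\ti G\ti\frb\), and the projections \(\bar{\pi}_{12,y,\rho}\), \(\bar{\pi}_{23,y,\rho}\) involve precisely the diagonal entries of \(X\) and \(Y\) that the coloring kills (compare the term \(\Ad_{g_{23}}(\rho\cdot Y)_{++}+\dia(Y)\) with its colored truncation \((\cdot)_{+c}\)). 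Consequently the restriction to the colored subspace \(\frb[\bar{c}]\ti G\ti\frb[c]\) does not commute with the convolution on the nose; the paper proves it commutes up to homotopy by exhibiting an explicit intertwiner \(U=\exp(-\sum_{i\in\sigma(c)}u_i)\) with \(u_i=\partial_{Y'_{ii}}D'\oti\partial_{X''_{ii}}D''\), and this argument needs the differentials to be linear along the \(\frb\)-factors --- a property verified for the braid generators, not a formality. Without this lemma your step two does not go through, and neither does the well-definedness of \(\Phi^{\dic}\) on composite braids, which is what lets you compute the restriction generator by generator in the first place.

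Your treatment of the closure step is closer to the mark: the paper also reduces it to a base-change computation, carried out on a commutative diagram of spaces identifying \(\und{j}^*[\sigma,c,\tau]\) applied to the trace with the trace of the restricted object, and the identification of that restriction with \(\Ltimes_{R(\beta)}R(\und{\beta})\) does come down to the Koszul, regular-sequence observation you make. So the fix is local: supply (or cite) the homotopy establishing that the colored pull-back is a homomorphism of convolution algebras, rather than treating that compatibility as automatic.
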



{\bf Acknowledgments:}  The authors would like to thank you Eugene Gorsky, Andrei Negut and Matt Hogancamp for
useful discussions. Also we are thankful to organizers of the workshop "Hidden Algebraic structures in Topology" in  CalTech in March of 2019 for inviting the authors, the discussion of
\(q\to 1/q\) at the workshop spurred us to think about extending our previous construction.
The work of A.O. was supported in part by  the NSF CAREER grant DMS-1352398, NSF FRG grant DMS-1760373 and Simons Fellowship.
The work of L.R. was supported in part by  the NSF grant DMS-1108727.

\def\yLg{Long}
\def\ylg{long}
\def\ySh{Short}
\def\ysh{short}

\section{Dualizable category}
\label{sec:dual-cat}

\subsection{Notations}

In this text we do not discuss induction and restriction functors, so we can fix the size of our matrices to be \(n\) and use the standard notations \(\mathfrak{g}\), \(\mathfrak{b}\), \(\mathfrak{n}\)  for Lie algebras associated with the group \(G=\GL_n\).
We also use notation \(\dia: \frb\to \cc^n\) for the linear map that extract the diagonal of the matrix.
The  same notation is used for linear map \(\dia: \frb\to \frh\) when we identify \(\CC^n\) with \(\frh\)
in the most natural way.

The other set of notations are borrowed from our previous papers. In particular, \(X_+\) \(X_{++}\) denote
 upper-triangular and strictly upper-triangular parts of the matrix \(X\), so that \(X=X_-+X_{++}=X_{--}+X_+\).
 The symmetry group \(S_n\) acts on  upper-triangular matrices by permuting their diagonal entries:
 \[\sigma(X)_{ij}=X_{ij}-\delta_{i,j}(X_{ii}-X_{\sigma(i),\sigma(i)}).\]
 In particular \((\sigma \cdot X)_{ii}=X_{\sigma(i),\sigma(i)}\)


 \subsection{\yLg\ categories}
\label{sec:big-categories}

Our basic category  is the monodromic version of the category of \cite{OblomkovRozansky16}. The monodromic categories have advantage over the category from \cite{OblomkovRozansky16}: the Fourier transform acts on them as endofunctor. The monodromic category has two equivalent presentations, related by the Kn\"{o}rrer periocidity: the \ylg\ one and the \ysh\ one. The \ylg\ category has an obvious monoidal structure, but the action of the Fourier endofunctor is obscured. The \ysh\ category has an obvious Fourier symmetry, but the convolution looks unnatural and its Fourier symmetry requires a special proof.

The \ylg\ category is the category of \(G\ti B \ti B\)-equivariant matrix factorizations over the space
\[\underline{\scX_2}\subset\frg \ti G\ti \frb\ti G\ti\frb, \quad
  \underline{\scX_2}=\{\dia(Y_1)=\dia(Y_2)\},\]
  \[(g,b_1,b_2)\cdot (X,g_1,Y_1,g_2,Y_2)= (\Ad_g(X),g_1\cdot b_1,\Ad_{b_1}Y_1, g_2\cdot b_2,\Ad_{b_2}Y_2).\]

  For a pair of permutations \(\sigma,\tau\) we define a \(G\ti B^2\)-invariant potential on \(\underline{\scX_2}\):
  \begin{equation}\label{eq:pot1}W_{\sigma,\tau}(X,g_1,Y_1,g_2,Y_2)=\Tr(X(\Ad_{g_1}(\sigma\cdot
    Y_1)-\Ad_{g_2}(\tau\cdot Y_2))).
  \end{equation}
This   potential is invariant with respect to the first factor of
  \(T_{qt}=\cc^*_q\ti \cc^*_t\) and has weight \(2\) with respect to the second factor:
  \[(\lambda,\mu)\cdot (X,g_1,Y_1,g_2,Y_2)=(\lambda X,g_1,\lambda^{-1}\mu^2 Y_1,g_2,\lambda^{-1}\mu^2Y_2).\]

 Thus the \ylg\ category is  the category of \(G\ti B^2\) matrix factorization which have two-periodic differentials of \(\cc^*_q\)
  degree \(1\):
  \[\uMF_{\sigma, \tau}:=\MF_{G\ti B^2}(\underline{\scX_2},W_{\sigma,\tau}).\]

  The convolution space \(\underline{\scX_3}=\frg\ti (G\ti \frb)^3\) has
  three \(G\ti B^3\)-equivariant projection
   \(\pi_{ij}:\ucx_{3}\to \ucx_{2}\)
   and we can
  define an associative convolution product between the categories:
  \[\star: \uMF_{\sigma,\tau}\ti \uMF_{\tau,\rho}\to \uMF_{\sigma,\rho},\]
  \begin{equation}\label{eq:big-con}
    \calF\star\calG:=\pi_{13*}(\CE_{\frn^{(2)}}(\pi^*_{12}(\calG)\oti\pi_{23}^*(\calG))^T).\end{equation}

  The smaller space \(\calX=\frg\ti G\ti \frn\ti G\ti \frn\) naturally embeds into the big space
  \[\underline{i}:\scX\to \ucx.\]

  The pull-back \(\underline{i}^*(W_{\sigma,\tau})\) is independent of \(\sigma,\tau\) and we denote this potential \(W\).
  The  corresponding matrix factorization category:
  \[\MF:=\MF_{G\ti B^2}(\scX,W)\]
  was studied in \cite{OblomkovRozansky16}. In particular, the pull-back morphism \(\ui^*:\uMF_{\bullet,\bullet}\to \MF\)
  intertwines the above defined convolution product with the product from \cite{OblomkovRozansky16}. In
  \cite{OblomkovRozansky16} we define a homomorphism from the braid group \(\Br_n\) to \((\MF,
  \star)\). It turns out that we can extend this homomorphism to the category \(\uMF_{\bullet,\bullet}\). It is easier to explain this
  homomorphism with the \ysh\ (that is, \Knr-reduced) category \(\uoMF_{\sigma,\tau}\) which we introduce in the next subsection.

\subsection{\ySh\ category}
\label{sec:short-category}

We define the short category as a category of \(B^2\)-equivariant matrix factorizations:
\[\uoMF_{\sigma,\tau}=\MF_{B^2}(\scXr,\Wr_{\sigma,\tau}), \quad \uscXr=\frb\ti G\ti \frb,\]
\begin{equation}
  \label{eq:pot2}
\Wr_{\sigma,\tau}(X,g,Y)=\Tr(X(\sigma\cdot Y-\Ad_g(\tau\cdot Y))).
\end{equation}

This category is related to the big category from the previous section by the Knorrer functor and the trivial  extension of \(G\)-action.
The  argument is parallel to the argument from \cite{OblomkovRozansky16}. First we introduce the intermediate  space \(\ucx^\circ\) which is
the \(G\)-quotient of the space \(\ucx\):
\[\scX^\circ\subset\frg\ti  G\ti \frb^2, \quad \scX^\circ=\{\dia(Y_1)=\dia(Y_2)\},\]
\[  W^\circ_{\sigma,\tau}(X,g,Y_1,Y_2)=\Tr(X(\sigma\cdot Y_1-\Ad_{g}(\tau\cdot Y_2))).\]

Since the \(G\)-action on \(\scX\) is free, the quotient map is the equivalence between the categories \(\uMF_{\sigma,\tau}\) and
\[\uMF_{\sigma,\tau}^\circ=\MF_{B^2}(\scX,W^\circ_{\sigma,\tau}).\]

Next we observe that we can write the potential as a sum of two terms with the second term being quadratic:
\[W^\circ_{\sigma,\tau}(X,g,Y_1,Y_2)=\Tr(X_{+}(\sigma\cdot Y_1-\Ad_g(\tau\cdot Y_2)))+\Tr(X_{--}((Y_1)_{++}+\Ad_g(Y_2)_{++})).\]
Since \(\dia(Y_1)=\dia(Y_2)\) the first term is equal to \(\Wr_{\sigma,\tau}\) and the second term is quadratic
with respect to the coordinates \(X_{--}\) and \((Y_1)_{++}+\Ad_g(Y_2)_{++}\). Thus we can apply the
Knorrer periodicity functor to establish the isomorphism of categories:
\[\MF(\underline{\scX}^\circ,W_{\sigma,\tau})\simeq \MF(\uscXr,\Wr_{\sigma,\tau}).\]
To upgrade this relation to the level of \(B^2\)-equivariant matrix factorizations we need to follow the method
of \cite{OblomkovRozansky16}.
In more details we need to define an auxiliary subspace \(\widetilde{\uscX}\) of \(\uscX\)
\[\widetilde{\uscX}=\frb\ti G\ti \frb\ti G\ti \frb,\quad j^x:\widetilde{\uscX}\to \uscX.\]
The auxiliary space \(\widetilde{\uscX}\) projects to the space \(\uscXr\):
\[\pi_y:\widetilde{\uscX}\to\uscXr,\quad \pi_y(X,g_1,Y_1,g_2,Y_2)=(X,g_1^{-1}g_2,Y_2).\]
There is a unique \(B^2\)-equivariant structure on \(\widetilde{\uscX}\) that makes map \(\pi_y\)-equivariant.
On the other hand the embedding \(j^x\) have enough \(B^2\)-equivariant properties to have well-defined
push-forward functor
\[j^x_*:\MF_{B^2}(\widetilde{\uscX},\pi^*_y(\Wr_{\sigma,\tau}))\to \MF_{G\ti B^2}(\uscX,W_{\sigma,\tau}).\]
Thus we can define the equivariant Knorrer functor as composition:
\[\Phi: \MF_{G\ti B^2}(\uscX,W_{\sigma,\tau})\to \MF_{B^2}(\uscXr,\Wr_{\sigma,\tau}),\quad \Phi=j_*^x\circ \pi_{y*}.\]

The  adjoint functor \(\Psi=\pi_{y*}\circ j^{x*}\) is the left inverse of \(\Phi\):
\begin{equation}\label{eq:psi-phi}
  \Psi\circ \Phi=1.\end{equation}

\subsection{Short convolutions}
\label{sec:short-convolutions}

The  short category has two convolution structures \(\stry,\strx\) which we will prove to be the same.
Both convolutions use the same  \(B^3\)-equivariant convolution space
\[\uscXr_3=\frb\ti G^3\ti \frb,\quad (b_1,b_2,b_3)\cdot (X,g_{12},g_{23},Y)=(\Ad_{b_1}(X),g_{12},g_{23},\Ad_{b_3}(Y)).\]

First we define the convolution that is transported from the big category by the Knorrer functor \(\Phi\). We
define the projection maps \(\bar{\pi}_{ij,y,\rho}:\uscXr_3:\to \uscXr\)
\[\bar{\pi}_{12,y,\rho}(X,g_{12},g_{23},Y)=(X,g_{12},\Ad_{g_{23}}(\rho\cdot Y)_{++}+\dia(Y)),\]
\[\bar{\pi}_{23,y,\rho}(X,g_{12},g_{23},Y)=(\Ad^{-1}_{g_{12}}(X)_+,g_{23},Y),\]
\[\bar{\pi}_{13,y,\rho}(X,g_{12},g_{23},Y)=(X,g_{12}g_{23},Y).\]

The construction of the convolution \(\stry\) is related to the convolution structure \(\bar{\star}\)
from \cite{OblomkovRozansky16} by the pull-back \(\underline{i}^*\). In particular, the main potential is the sum
\[\Wr_{\sigma,\tau}(X,g,Y)=\Wr+\delta\Wr_{\sigma,\tau},\]
where
\[
\Wr=-\Tr(X\Ad_g(Y_{++})),\qquad
 \delta\Wr_{\sigma,\tau}=\Tr(X\sigma\cdot\Delta(Y))-\Tr(X\Ad_g(\tau\cdot Y)).
\]

\begin{proposition}
  For any \(\sigma,\tau,\rho\) we have
  \[\bar{\pi}_{12,y,\rho}^*(\Wr_{\sigma,\tau})+\bar{\pi}_{23,y,\rho}^*(\Wr_{\tau,\rho})=\bar{\pi}_{13,y,\rho}^*(\Wr_{\sigma,\rho}).\]
\end{proposition}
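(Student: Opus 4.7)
The plan is to verify the identity by a direct unwinding of the three pullbacks, exploiting the fact that \(X\in\frb\) is upper-triangular. I will use the abbreviations \(h = \dia(Y)\), \(M = \Ad_{g_{23}}(\rho\cdot Y)\), and \(X' = \Ad_{g_{12}}^{-1}(X)\). Since the \(S_n\)-action affects only the diagonal, \(\sigma\cdot Y = Y_{++}+\sigma(h)\), and correspondingly the new \(\frb\)-valued coordinate produced by \(\bar\pi_{12,y,\rho}\) is \(Z = M_{++}+h\), with \(\sigma\cdot Z = M_{++}+\sigma(h)\) and \(\tau\cdot Z = M_{++}+\tau(h)\).

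I will then expand the two contributions on the left:
\[
\bar\pi_{12,y,\rho}^{*}(\Wr_{\sigma,\tau}) = \Tr\bigl(X\bigl(M_{++}+\sigma(h) - \Ad_{g_{12}}(M_{++}) - \Ad_{g_{12}}(\tau(h))\bigr)\bigr),
\]
\[
\bar\pi_{23,y,\rho}^{*}(\Wr_{\tau,\rho}) = \Tr\bigl(X'_{+}\bigl(Y_{++} + \tau(h) - M\bigr)\bigr).
\]
To handle the second one I will invoke the elementary trace identity \(\Tr(A_{+}B) = \Tr(AB_{-})\) (both sides equal \(\sum_{i\le j} A_{ij}B_{ji}\)). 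Since \((Y_{++})_{-}=0\) and \((\tau(h))_{-}=\tau(h)\), this rewrites the second contribution as \(\Tr\bigl(X'(\tau(h)-M_{-})\bigr) = \Tr\bigl(X\,\Ad_{g_{12}}(\tau(h)-M_{-})\bigr)\). Summing the two contributions, the \(\tau(h)\) pieces cancel, and using \(M = M_{++}+M_{-}\) the sum reduces to
\[
\Tr\bigl(X\bigl(M_{++} + \sigma(h) - \Ad_{g_{12}}(M)\bigr)\bigr).
\]

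A direct expansion of the right-hand side gives
\[
\bar\pi_{13,y,\rho}^{*}(\Wr_{\sigma,\rho}) = \Tr\bigl(X\bigl(Y_{++}+\sigma(h)-\Ad_{g_{12}g_{23}}(\rho\cdot Y)\bigr)\bigr) = \Tr\bigl(X\bigl(Y_{++}+\sigma(h)-\Ad_{g_{12}}(M)\bigr)\bigr),
\]
so LHS \(-\) RHS equals \(\Tr\bigl(X(M_{++}-Y_{++})\bigr)\). Since \(X\in\frb\) is upper-triangular while both \(M_{++}\) and \(Y_{++}\) are strictly upper-triangular, each of these traces vanishes, which closes the argument. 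I expect essentially no real obstacle: the one point worth flagging is reading \(\Ad_{g_{23}}(\rho\cdot Y)_{++}\) in the definition of \(\bar\pi_{12,y,\rho}\) as the strictly upper part taken \emph{after} conjugation (otherwise \(Z\) would not lie in \(\frb\)); after that the rest is routine bookkeeping with the upper/lower decomposition and the trace pairing.
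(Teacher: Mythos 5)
Your computation is correct: with the reading $\Ad_{g_{23}}(\rho\cdot Y)_{++}=(\Ad_{g_{23}}(\rho\cdot Y))_{++}$ (which is indeed the intended one, as you note, since otherwise the target of $\bar\pi_{12,y,\rho}$ would not lie in $\frb$), the identity $\Tr(A_+B)=\Tr(AB_-)$, the $\Ad$-invariance of the trace, and the vanishing $\Tr(XN)=0$ for $X\in\frb$, $N\in\frn$ close the argument exactly as you describe. However, your route is not the one the paper takes. The paper first splits the potential as $\Wr_{\sigma,\tau}=\Wr+\delta\Wr_{\sigma,\tau}$ with $\Wr=-\Tr(X\Ad_g(Y_{++}))$ independent of $\sigma,\tau$; the cocycle identity for the $\Wr$-part is not reproved but imported from Proposition 5.3 of \cite{OblomkovRozansky16} (via the pullback $\und{i}^*$ to the smaller space with $Y\in\frn$), and only the diagonal correction $\delta\Wr_{\bullet,\bullet}$, which involves $\Delta(Y)$ alone, is verified by a direct trace manipulation. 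You instead verify the full identity in one pass, organizing the bookkeeping around the decomposition $M=M_{++}+M_-$ of $M=\Ad_{g_{23}}(\rho\cdot Y)$ and the cancellation of the $\tau(h)$ terms. What your approach buys is self-containedness — no appeal to the earlier paper — and a cleaner single mechanism (the $+/-$ trace pairing) handling both the strictly-upper and diagonal contributions simultaneously; what the paper's splitting buys is consistency with its overall strategy of reducing every statement about $\uoMF_{\bullet,\bullet}$ to the corresponding statement in \cite{OblomkovRozansky16} plus a diagonal correction, which is the pattern reused throughout Sections 2--3. Either proof is acceptable; yours is arguably the more transparent one for a reader without \cite{OblomkovRozansky16} at hand.
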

\begin{proof}
  The pull-back along \(\underline{i}\)  of this equation yields a relation between the pull-backs of the potential
  \(\Wr\) and this relation is proven in proposition 5.3 of \cite{OblomkovRozansky16}. Thus we only need
  to show the relation between the pull-backs of \(\delta\Wr_{\bullet,\bullet}\), which is verified by a direct computation:
  \begin{multline*}
    \Tr(X\sigma \cdot \Delta(Y))-\Tr(X\Ad_{g_{12}}(\Ad_{g_{23}}(\rho\cdot \Delta(Y))_{++}+\tau\cdot \Delta(Y)))+
    \Tr(\Ad_{g_{12}}^{-1}(X)_+\tau\cdot\Delta(Y))-\\
    \Tr(\Ad_{g_{12}}^{-1}(X)_+\Ad_{g_{23}}(\rho\cdot\Delta(Y)))=
    \Tr(X\sigma \cdot \Delta(Y))-\Tr(\Ad^{-1}_{g_{12}}(X)(\Ad_{g_{23}}(\rho\cdot \Delta(Y))_{++}+\tau\cdot \Delta(Y)))+\\
    \Tr(\Ad_{g_{12}}^{-1}(X)\tau\cdot\Delta(Y))-\Tr(\Ad_{g_{12}}^{-1}(X)_+\Ad_{g_{23}}(\rho\cdot\Delta(Y)))=
    \Tr(X\sigma \cdot \Delta(Y))-\\\Tr(\Ad^{-1}_{g_{12}}(X)\Ad_{g_{23}}(\rho\cdot \Delta(Y))_{++})-\Tr(\Ad_{g_{12}}^{-1}(X)_+\Ad_{g_{23}}(\rho\cdot\Delta(Y)))= \Tr(X\sigma \cdot \Delta(Y))-\\\Tr(\Ad^{-1}_{g_{12}}(X)(\Ad_{g_{23}}(\rho\cdot \Delta(Y))-\Ad_{g_{23}}(\rho\cdot \Delta(Y))_-))-\Tr(\Ad_{g_{12}}^{-1}(X)\Ad_{g_{23}}(\rho\cdot\Delta(Y))_-)\\
    =\bar{\pi}_{13,y,\rho}^*(\delta \Wr_{\sigma,\rho}).
  \end{multline*}
\end{proof}

Thus for two matrix factorizations \(\calF\in\uoMF_{\sigma,\tau} \), \(\calG\in\uoMF_{\tau,\rho}\), the tensor product of
the pull-backs \(\bar{\pi}_{12,y,\rho}^*(\calF)\) and \(\bar{\pi}_{23,y,\rho}^*(\calG)\) is a
matrix factorization with the potential \(\bar{\pi}_{13,y,\rho}^*(\Wr_{\sigma,\rho})\). The  convolution
space \(\overline{\scX}_3=\frb\ti G^2\ti \frn\) from section 5 of \cite{OblomkovRozansky16} embeds
naturally inside the enlarged convolution space \(\underline{\scXr_3}\):
\[\underline{i}:\overline{\scX}_3\to\underline{\scXr}. \] The  morphisms \(\bar{\pi}_{ij}: \overline{\scX}_3\to\overline{\scX}_2\)
are intertwined by \(\underline{i}\):
\[\underline{i}\circ \bar{\pi}_{ij}=\bar{\pi}_{ij,y,\rho}\circ \underline{i}.\]

The  maps \(\bar{\pi}_{ij,y,\rho}\) are not \(B^3\)-equivariant but in section 5.4
of \cite{OblomkovRozansky16} we presented a construction for \(B^3\)-equivariant
enrichment of the tensor product of \(\bar{\pi}_{12}^*(\calF')\) and
\(\bar{\pi}_{23}^*(\calG')\):
\[\bar{\pi}_{12}^*(\calF')\otimes_B \bar{\pi}_{23}^*(\calG')\in \MF_{B^3}(\overline{\scX}_3,\bar{\pi}_{13}^*(\Wr)),\quad \calF',\calG'\in \MF.\]
This construction allows us to define a binary operation \cite{OblomkovRozansky16} on \(\overline{\MF}\):
\[\calF'\bar{\star}\calG'=\bar{\pi}_{13*}(\CE_{\frn^{(2)}}(\bar{\pi}_{12}^*(\calF')\otimes_B \bar{\pi}_{23}^*(\calG'))^{T^{(2)}}).\]

An almost verbatim repetition of the construction from the section 5.4 of
\cite{OblomkovRozansky16} yields  the desired extension of the construction
for the maps \(\pi_{ij,y,\rho}\):
\begin{proposition}
  There is a \(B^3\)-equivariant structure on the tensor product \(\bar{\pi}_{12,y,\rho}^*(\calF)\oti\bar{\pi}_{23,y,\rho}^*(\calG)\), \(\calF\in \uoMF_{\sigma,\tau}\), \(\calG\in \uoMF_{\tau,\rho}\), which we denote
  \[\bar{\pi}_{12,y,\rho}^*(\calF)\oti_B\bar{\pi}_{23,y,\rho}^*(\calG)\in \MF_{B^3}(\underline{\scXr_3},\bar{\pi}_{13,y,\rho}^*(\Wr_{\sigma,\rho})),\]
  such that
  \begin{equation}\label{eq:I-conv}
  \und{i}^*(\bar{\pi}_{12,y,\rho}^*(\calF)\oti_B\bar{\pi}_{23,y,\rho}^*(\calG))=
  \bar{\pi}_{12}^*(\und{i}(\calF))\oti_B\bar{\pi}_{23}^*(\und{i}(\calG)),
\end{equation}
\begin{equation}
  \label{eq:Kn-conv}
  \Phi\circ\bar{\pi}_{13,y,\rho,*}(\CE_{\frn^{(2)}}(\bar{\pi}_{12,y,\rho}^*(\calF)
  \oti_B\bar{\pi}_{23,y,\rho}^*(\calG))^{T^{(2)}})=
  \pi_{13,*}(\CE_{\frn^{(2)}}(\pi_{12}^*(\calF')
  \oti\pi_{23}^*(\calG'))^{T^{(2)}}),
\end{equation}
where \(\calF'=\Phi(\calF)\), \(\calG'=\Phi(\calG)\).
\end{proposition}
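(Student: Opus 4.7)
The plan is to mimic the auxiliary-space construction of section 5.4 of \cite{OblomkovRozansky16} verbatim. The obstruction is that neither $\bar{\pi}_{12,y,\rho}$ nor $\bar{\pi}_{23,y,\rho}$ is $B^3$-equivariant on the nose: the Adjoint twists $\Ad_{g_{12}}^{-1}$ and $\Ad_{g_{23}}$ appearing in their definitions spoil the naive equivariance, so the two pull-backs cannot be tensored over the structure sheaf of $\underline{\scXr_3}$ in a $B^3$-equivariant way without additional work. The resolution is to factor these non-equivariant maps through a $B^3$-equivariant auxiliary space and then push forward.

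Concretely, I would first construct a space $\widetilde{\underline{\scXr_3}}$ equipped with canonical $B^3$-equivariant projections $\widetilde{\pi}_{12}, \widetilde{\pi}_{23} \colon \widetilde{\underline{\scXr_3}} \to \underline{\scXr}$ and a closed embedding $\widetilde{j} \colon \widetilde{\underline{\scXr_3}} \hookrightarrow \underline{\scXr_3}$ such that $\bar{\pi}_{ij,y,\rho} \circ \widetilde{j}$ agrees with $\widetilde{\pi}_{ij}$ up to a change of coordinates that is linear in the added auxiliary variables. The $B^3$-equivariant tensor product $\bar{\pi}_{12,y,\rho}^*(\calF) \oti_B \bar{\pi}_{23,y,\rho}^*(\calG)$ is then defined as the push-forward along $\widetilde{j}$ of the honest tensor product of $\widetilde{\pi}_{12}^*(\calF)$ and $\widetilde{\pi}_{23}^*(\calG)$. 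The preceding proposition guarantees that the result lives in $\MF_{B^3}(\underline{\scXr_3}, \bar{\pi}_{13,y,\rho}^*(\Wr_{\sigma,\rho}))$ as required, since the auxiliary potentials add correctly under the projections.

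Compatibility \eqref{eq:I-conv} would then follow because the auxiliary space used in section 5.4 of \cite{OblomkovRozansky16} for the original construction on $\overline{\scX}_3$ is precisely the $\und{i}$-preimage of $\widetilde{\underline{\scXr_3}}$, and pull-back along the closed embedding $\und{i}$ commutes with the push-forward along $\widetilde{j}$ by proper base change. For compatibility \eqref{eq:Kn-conv}, I would unwind the definition $\Phi = j^x_* \circ \pi_{y*}$: the projections $\pi_{ij}$ on the long convolution space $\ucx_3$ from \eqref{eq:big-con}, after the $G$-quotient and the Knörrer reduction built into $\Phi$, become precisely the maps $\bar{\pi}_{ij,y,\rho}$ on the short side. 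The main obstacle will be verifying that the three operations, push-forward $\pi_{13,*}$, the Chevalley--Eilenberg functor $\CE_{\frn^{(2)}}$, and the Knörrer reduction encoded in $\Phi$, commute up to natural isomorphism in the appropriate order; this reduces, as in the proof of the preceding proposition, to a term-by-term identification of the quadratic parts of the potentials that Knörrer periodicity eliminates on $\ucx_3$ with those eliminated on $\underline{\scXr_3}$, which can be checked directly on the defining formulas \eqref{eq:pot1} and \eqref{eq:pot2}.
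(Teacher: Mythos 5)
Your proposal follows essentially the same route as the paper, which offers no detailed argument beyond asserting that "an almost verbatim repetition of the construction from section 5.4 of \cite{OblomkovRozansky16}" yields the equivariant enrichment and its compatibilities. Your identification of the failure of naive $B^3$-equivariance of $\bar{\pi}_{12,y,\rho}$ and $\bar{\pi}_{23,y,\rho}$, the auxiliary-space remedy, and the base-change and Kn\"orrer-unwinding arguments for \eqref{eq:I-conv} and \eqref{eq:Kn-conv} are consistent with (and somewhat more explicit than) the paper's treatment.
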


Using the previous proposition we define the binary operation:
\[\stry: \uoMF_{\sigma,\tau}\ti\uoMF_{\tau,\rho}\to \uoMF_{\sigma,\rho},\]
 \begin{equation}\label{eq:shrt-con}\calF\stry\calG=\bar{\pi}_{13,y,\rho,*}(\CE_{\frn^{(2)}}(\bar{\pi}_{12,y,\rho}^*(\calF)
  \oti_B\bar{\pi}_{23,y,\rho}^*(\calG))^{T^{(2)}}).\end{equation}

The  equation \eqref{eq:Kn-conv} together with \eqref{eq:psi-phi} implies that the operation is associative.
On the other hand, equations \eqref{eq:I-conv} and \eqref{eq:Kn-conv} imply that \(\und{i}^*\) and \(\Phi\) are
homomorphisms of the convolution algebras.

\subsection{Induction functors}
\label{sec:induction-functors}

For a parabolic subgroup \(P\subset G_n\)  define auxilary spaces:
\[\und{\scX_2}^\circ(P)=\frp\ti P\ti \frb^2,\quad \und{\scXr_2}(P)=\frb\ti P\ti \frb,\]
where \(\frp=\mathrm{Lie}(P)\). We denote \(G_n=\GL(n)\), and \(P_k\subset G_n\)
is the parabolic subgroup with Lie algebra generated by \(\frb\) and \(E_{k,1}\).

Respectively, we denote by \(p_k\) the natural projection from \(P_k\) to \(G_k\ti G_{n-k}\)
and we use the same notation the corresponding projection of the Lie algebras. We also use
notation \(i_k\) for the embedding \(P_k\to G_n\) and its Lie algebra cousins.
These projections and inclusions
induces the \(B^2_n\)-equivariant morphisms:
\[p_k:\und{\scX_2}^\circ(P_k)\to \und{\scX_2}^\circ(G_k)\ti \und{\scX_2}^\circ(G_{n-k}),\quad
  \bar{p}_k:\und{\scXr_2}(P_k)\to \und{\scXr_2}(G_k)\ti \und{\scXr_2}(G_{n-k})\]
\[i_k:\und{\scX_2}^\circ(P_k)\to  \und{\scX_2}^\circ(G_{n}),\quad
  \bar{i}_k:\und{\scXr_2}(P_k)\to \und{\scXr_2}(G_n).\]

In the setting of symmetric group we have analogous notions of parabolic subgroups and inclusion
homomorphism  \(i_k: S_k\ti S_{n-k}\to S_n\). In close analogy to constructions from section 6 of \cite{OblomkovRozansky16}
we introduce the induction functors:
\[\ind_k: \MF_{B_k^2}(\und{\scX_2}^\circ(G_k),W_{\sigma_1,\tau_1}^\circ)\ti
  \MF_{B_{n-k}^2}(\und{\scX_2}^\circ(G_{n-k}),W_{\sigma_2,\tau_2}^\circ)\to \MF_{B_n^2}(\und{\scX_2}^\circ(G_n),W_{\sigma,\tau}^\circ) \]
\[\overline{\ind_k}: \MF_{B_k^2}(\und{\scXr_2}(G_k),\Wr_{\sigma_1,\tau_1})\ti
  \MF_{B_{n-k}^2}(\und{\scXr_2}(G_{n-k}),\Wr_{\sigma_2,\tau_2})
  \to \MF_{B_n^2}(\und{\scXr_2}(G_n),\Wr_{\sigma,\tau}) \]
where \(\sigma=i_k(\sigma_1,\sigma_2),\tau=i_k(\tau_1,\tau_2)\).

The  arguments of section 6 of \cite{OblomkovRozansky16} provide a proof of the key properties
of the induction functors.
\begin{proposition}
  We have
  \[\ind_k\circ (\Phi_k\ti \Phi_{n-k})=\Phi_n\circ \overline{\ind_k}\]
\end{proposition}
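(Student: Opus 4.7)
The plan is to promote the compatibility between induction and Knörrer periodicity proved in Section 6 of \cite{OblomkovRozansky16} to the monodromic setting by introducing a parabolic version of the auxiliary space $\widetilde{\uscX}$ from Section~2.3 and running a base-change argument on the resulting cube of spaces.

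First I would introduce the parabolic auxiliary space $\widetilde{\uscX}(P_k)=\frb\ti P_k\ti\frb\ti P_k\ti\frb$, together with its natural maps
\[
 j^x(P_k):\widetilde{\uscX}(P_k)\hookrightarrow \uscX^\circ(P_k),\qquad
 \pi_y(P_k):\widetilde{\uscX}(P_k)\to \uscXr(P_k),
\]
defined by the same formulas as in the $G_n$ case. The parabolic projection $p_k$ and inclusion $i_k$ lift to morphisms $\widetilde{p}_k$ and $\widetilde{i}_k$ between these auxiliary spaces so that they fit into squares with the two maps above. I would verify by direct inspection that the $\widetilde p_k$-square with $\pi_y$ and the $\widetilde i_k$-square with $j^x$ are Cartesian (on the nose, since the decomposition $X=X_{--}+X_+$ and the formula $\pi_y(X,g_1,Y_1,g_2,Y_2)=(X,g_1^{-1}g_2,Y_2)$ are compatible with the parabolic block structure), and that the potentials and their quadratic Knörrer-reducing summands pull back correctly across each face.

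Next I would expand both sides of the desired identity using the definitions. By construction $\ind_k$ is the composition of pullback along $p_k$ followed by a pushforward along $i_k$ (with a $\CE_{\frn}$ step promoting the equivariance from $B_k\ti B_{n-k}$ to $B_n$), and the same holds for $\overline{\ind_k}$ on the short side. The functor $\Phi_n$ is a composition of a pushforward along $\pi_y$ and a pullback along $j^x$ (so I read the text's $\Phi=j_*^x\circ\pi_{y*}$ in that sense), and similarly for $\Phi_k\ti\Phi_{n-k}$. After writing both compositions as diagrams on the parabolic auxiliary space, I would apply base change across the Cartesian squares identified in the previous step to commute the induction operations past $\pi_{y*}$ and $j^{x*}$. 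Compatibility of $\CE_{\frn}$ with these commutations follows because $\frn$ acts trivially on the extra $G$-factors of $\widetilde{\uscX}(P_k)$ introduced by the Knörrer construction, so the averaging functor passes through the pushforward/pullback pair.

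The step I expect to be genuinely delicate is the equivariance bookkeeping: the whole point of using the auxiliary space $\widetilde{\uscX}$ in Section~2.3 was to reconcile a $G\ti B^2$-equivariant problem with a merely $B^2$-equivariant one, and now I have to do this in a way that is simultaneously compatible with the parabolic $P_k$-structure on both sides. In particular, I have to check that the $B_n^3$-equivariant enrichment used to define $\ind_k$ and $\overline{\ind_k}$ agrees across the base-change isomorphisms; this ultimately reduces to the observation already exploited in \cite{OblomkovRozansky16}, that the nilpotent radical of $P_k$ acts freely on the $P_k$-factors of $\widetilde{\uscX}(P_k)$ in a way that is transported consistently by $\pi_y$ and $j^x$. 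Once this equivariance is verified, assembling the Cartesian squares, base-change identities and the pull-back formulas for the potentials yields the desired identity.
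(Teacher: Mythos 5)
Your outline is in the same spirit as what the paper intends: the paper offers no written proof of this proposition at all, deferring entirely to ``the arguments of section 6 of \cite{OblomkovRozansky16}'' transported through the auxiliary-space/Kn\"orrer machinery of Section 2.3, and your plan --- parabolic auxiliary spaces, compatibility of $j^x$ and $\pi_y$ with $p_k$ and $i_k$, base change, equivariance bookkeeping --- is the natural way to make that transport explicit. You also correctly single out the equivariance step as the delicate one.

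The one concrete claim I would push back on is that the $\widetilde{p}_k$-- and $\widetilde{i}_k$--squares involving $\pi_y$ are Cartesian ``on the nose'' in the two-group-factor presentation $\widetilde{\uscX}(P_k)=\frb\ti P_k\ti\frb\ti P_k\ti\frb$. They are not: since $\pi_y(X,g_1,Y_1,g_2,Y_2)=(X,g_1^{-1}g_2,Y_2)$, the fiber of $\pi_y$ over a point of $\uscXr(P_k)$ inside $\widetilde{\uscX}$ is a copy of $G\ti\frb$ (the free $g_1$ and $Y_1$ directions), whereas inside $\widetilde{\uscX}(P_k)$ it is only $P_k\ti\frb$; the honest fiber product is $\{g_1^{-1}g_2\in P_k\}$, which strictly contains $P_k\ti P_k$. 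As written, naive base change across that square fails. The repair is available and cheap: the induction functors in the paper are defined on the $G$-quotient presentation $\und{\scX_2}^\circ(P_k)=\frp\ti P_k\ti\frb^2$, where the analogue of $\pi_y$ simply forgets the $Y_1$-factor, its fibers are the fixed affine space $\frb$ independent of the group, and the corresponding squares with $p_k$ and $i_k$ \emph{are} Cartesian; alternatively, one observes that the discrepancy in the two-factor presentation is exactly a free $G$-orbit direction along which all objects are equivariant, so the pushforward is insensitive to it. Either way you should route the argument through $\und{\scX_2}^\circ$ before invoking base change; with that adjustment, and with the block-diagonal splitting of the quadratic Kn\"orrer summand that you already note, the outline goes through.
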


\begin{proposition}
  For any
  \[\calF_i,\calG_i\in \MF_{B_k^2}(\und{\scX_2}^\circ(G_k),W_{\sigma_{1i},\tau_{1i}}^\circ),\quad
    \calF_2,\calG_2\in \MF_{B_{n-k}^2}(\und{\scX_2}^\circ(G_{n-k}),W_{\sigma_{2i},\tau_{2i}}^\circ),\]
  we have
  \[\ind_k(\calF_1,\calF_2)\star \ind_k(\calG_1,\calG_2)=\ind_k(\calF_1\star\calG_1,\calG_1\star\calG_2)\]
  where \(\tau_{1i}=\sigma_{2i}\).
\end{proposition}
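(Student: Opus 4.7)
The plan is to mimic the proof of the analogous statement in Section~6 of \cite{OblomkovRozansky16}, adapted to the long monodromic categories, and to reduce the claim to a base-change/projection-formula identity between the parabolic and total convolution spaces. The new ingredient compared to \cite{OblomkovRozansky16} is only the extra term \(\delta W_{\sigma,\tau}\) appearing in the monodromic potentials \eqref{eq:pot1}, so the main task is to check that this term is compatible with the parabolic factorisation.

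First, I would introduce the parabolic triple space
\[
\underline{\scX_3}^\circ(P_k)=\frp_k\times P_k^{\,2}\times \frb^{\,3},
\]
with its three projections to \(\underline{\scX_2}^\circ(P_k)\) mirroring \(\pi_{12},\pi_{23},\pi_{13}\), and the evident maps
\[
i_k\colon \underline{\scX_3}^\circ(P_k)\to \underline{\scX_3}^\circ(G_n),\qquad
p_k\colon \underline{\scX_3}^\circ(P_k)\to \underline{\scX_3}^\circ(G_k)\times \underline{\scX_3}^\circ(G_{n-k}).
\]
I would check that these fit into commutative squares with the \(\pi_{ij}\)'s on both the \(G_n\) and \(G_k\times G_{n-k}\) sides, and that the pull-back of \(W_{\sigma,\tau}^\circ\) under \(i_k\) decomposes, modulo terms killed after the Knörrer/Chevalley--Eilenberg contraction along \(\frn_{P_k}^{(2)}\), as the sum via \(p_k\) of the two Levi potentials. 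For the \(W\)-part this is exactly the computation of \cite{OblomkovRozansky16}; for \(\delta W_{\sigma,\tau}\), since \(\sigma = i_k(\sigma_1,\sigma_2)\) and \(\tau = i_k(\tau_1,\tau_2)\) act diagonally on \(\frb\cap \frp_k\) and preserve the block decomposition, the cross terms all lie in the ideal of \(\frn_{P_k}\) and hence vanish after contraction.

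Next, I would apply this geometric input to the convolution formula~\eqref{eq:big-con}. By base change along the square built from \(i_k\) and \(p_k\) on the three convolution spaces, together with the projection formula for the push-forward \(\pi_{13,*}\), the right-hand side
\[
\ind_k\!\bigl(\calF_1\star\calG_1,\;\calF_2\star\calG_2\bigr)
\]
is computed as the induction from \(\underline{\scX_2}^\circ(P_k)\) of the convolution product assembled from \(\pi_{12}^*\calF_j\otimes\pi_{23}^*\calG_j\). On the other hand, writing \(\ind_k(\calF_1,\calF_2)\star \ind_k(\calG_1,\calG_2)\) explicitly by~\eqref{eq:big-con} and then using base change along the map \(i_k\colon \underline{\scX_3}^\circ(P_k)\hookrightarrow \underline{\scX_3}^\circ(G_n)\), the integral over \(G_n\) cleanly reduces to an integral over \(P_k\) once the Chevalley--Eilenberg complex \(\CE_{\frn^{(2)}}\) absorbs the unipotent radical of \(P_k\) (this is the standard parabolic induction/contraction trick). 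The two sides then coincide as \(B_n^2\)-equivariant matrix factorisations on \(\underline{\scX_2}^\circ(G_n)\) with potential \(W_{\sigma,\tau}^\circ\).

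The step I expect to be the main obstacle is the careful bookkeeping of the \(B^3\)-equivariant structure and the Chevalley--Eilenberg twists under the two different push-forwards: one pushes through the unipotent radical of \(P_k\) inside the convolution, the other pushes through it inside the induction, and one must verify these commute in a way compatible with the \(\Tqt\)-grading and with the Tate twist \(T^{(2)}\) appearing in~\eqref{eq:big-con}. Once this compatibility is established, the previous proposition (\(\ind_k\circ(\Phi_k\times\Phi_{n-k})=\Phi_n\circ\overline{\ind_k}\)) together with~\eqref{eq:psi-phi} gives the analogous monoidality for \(\overline{\ind_k}\) and \(\stry\) for free.
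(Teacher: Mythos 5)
Your proposal is correct and follows essentially the same route as the paper, which itself offers no argument beyond asserting that ``the arguments of section 6 of \cite{OblomkovRozansky16} provide a proof'' and deferring details to \cite{OblomkovRozansky18c}. Your sketch is in fact more informative than the paper's one-line citation: it correctly isolates the only genuinely new ingredient in the monodromic setting, namely the compatibility of the correction term \(\delta W_{\sigma,\tau}\) with the parabolic block decomposition (which holds because \(\sigma=i_k(\sigma_1,\sigma_2)\) and \(\tau=i_k(\tau_1,\tau_2)\) preserve the blocks, so the diagonal terms split with no cross terms), and the concluding remark that the \(\stry\)-statement for \(\overline{\ind_k}\) follows from \(\Phi\), \(\Psi\) and \eqref{eq:psi-phi} matches the paper's own observation.
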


The  combination of these two propositions also implies that the induction functor \(\overline{\ind_k}\)
is a homomorphism of  \(\stry\)-convolution algebras.
We leave the detailed discussion of the induction functors for the forth-coming
publication \cite{OblomkovRozansky18c} where  we explain the relation between the induction functors and the
elliptic Hall algebra.

\subsection{Units of convolution algebras}
\label{sec:units-conv-algebr}

The potentials \(W_{\tau,\tau}\) and \(\Wr_{\tau,\tau}\) vanish on subvarieties
\(\und{\scX_2}(B)\) and \(\und{\scXr_2}(B)\) thus we can define Koszul matrix factorizations:
\[\und{\calc}_{\parallel}=i_{B*}(\CC[\und{\scX_2}(B)]),\quad \und{\calcr}_{\parallel}=\bar{i}_{B*}(\CC[\und{\scXr_2}])
\]
where \(i_B\) and \(\bar{i}_B\) are the corresponding embeddings.

The argument of proposition 7.1 and corollary 7.2 from \cite{OblomkovRozansky16} implies that these matrix factorizations
are units in the convolution algebras.

\begin{proposition}
  The matrix factorizations \(\und{\calc}_\parallel\in \uMF_{\tau,\tau}\)  and \(\und{\calcr}_\parallel\uoMF_{\tau,\tau}\) are the units in the convolution algebras.
  That is for any \(\calF\in \uMF_{\tau,\sigma},\calG\in\uMF_{\sigma,\tau}\), \(\bar{\calF}\in \uoMF_{\tau,\sigma},
  \bar{\calG}\in \uoMF_{\sigma,\tau}\) we have:
  \[\und{\calc}_\parallel\star\calF=\calF,\quad \calG\star\und{\calc}_\parallel=\calG,\]
  \[\und{\calcr}_\parallel\stry\bar{\calF}=\bar{\calF},\quad \bar{\calG}\stry\und{\calcr}_\parallel=\bar{\calG}.\]
\end{proposition}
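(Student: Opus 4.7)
The plan is to bootstrap from the analogous statement in \cite{OblomkovRozansky16} (Propositions 7.1 and 7.2) via the Kn\"orrer equivalence, rather than redo the Koszul resolution calculation from scratch.

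First, I would reduce the long-category assertion to the short-category one. The functor $\Phi$ has left inverse $\Psi$ by \eqref{eq:psi-phi} and intertwines the two convolutions by \eqref{eq:Kn-conv}. So once the compatibility $\Phi(\und{\calc}_\parallel) \simeq \und{\calcr}_\parallel$ is verified --- which amounts to matching the Koszul resolutions of the respective diagonal subvarieties under the Kn\"orrer reduction $\pi_{y*} \circ j^{x*}$ --- the identity $\und{\calcr}_\parallel \stry \Phi(\calF) = \Phi(\calF)$ translates back into $\und{\calc}_\parallel \star \calF = \calF$ by applying $\Psi$. The other unit identities $\calG \star \und{\calc}_\parallel = \calG$ and $\bar{\calG} \stry \und{\calcr}_\parallel = \bar{\calG}$ are handled symmetrically.

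Second, for the short-category identity $\und{\calcr}_\parallel \stry \bar{\calF} = \bar{\calF}$ I would evaluate formula \eqref{eq:shrt-con} directly. The pull-back $\bar{\pi}_{12,y,\tau}^*(\und{\calcr}_\parallel)$ is a Koszul matrix factorization supported on $\{g_{12} \in B\} \subset \uscXr_3$, whose Koszul generators scheme-theoretically cut out the diagonal subvariety. After tensoring with $\bar{\pi}_{23,y,\tau}^*(\bar{\calF})$ via the $B^3$-equivariant tensor product from the preceding proposition, applying $\CE_{\frn^{(2)}}$, and pushing forward along $\bar{\pi}_{13,y,\tau}$, this Koszul collapse should identify the result with $\bar{\calF}$. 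The driving mechanism is the same one used in \cite{OblomkovRozansky16} \S7: on the locus $\{g_{12} \in B\}$ the maps $\bar{\pi}_{ij,y,\tau}$ restrict to the maps $\bar{\pi}_{ij}$ of \cite{OblomkovRozansky16}, so the Koszul calculation from there transports over.

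The main obstacle is verifying that the monodromic correction $\delta \Wr_{\tau,\tau}$ does not obstruct the collapse. Since $\und{\calcr}_\parallel$ lives in $\uoMF_{\tau,\tau}$ with matched permutation indices, a direct computation shows that on the support locus $\{g \in B,\ \dia(Y_1) = \dia(Y_2)\}$ the twisted contributions $\Tr(X \tau \cdot \Delta(Y)) - \Tr(X \Ad_g(\tau \cdot Y))$ cancel modulo the defining Koszul relations, so the pull-back along $\bar{\pi}_{12,y,\tau}$ reduces to the untwisted Koszul matrix factorization of \cite{OblomkovRozansky16}. The technical check is to upgrade this scheme-theoretic vanishing to a chain-level statement about the Koszul differentials, tracking the $\tau$-twists through the explicit formulas for $\bar{\pi}_{ij,y,\tau}$; once this is done, the argument of \cite{OblomkovRozansky16} \S7 applies essentially verbatim.
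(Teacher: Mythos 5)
Your proposal is correct and follows essentially the same route as the paper, which simply invokes the argument of Proposition 7.1 and Corollary 7.2 of \cite{OblomkovRozansky16}: transport the Koszul-collapse computation from the non-monodromic setting, using that the unit is supported on $\{g\in B\}$ where the extra term $\delta\Wr_{\tau,\tau}$ vanishes, and pass between the long and short categories via the Kn\"orrer functors and \eqref{eq:Kn-conv}. Your plan is a reasonable fleshing-out of that citation; no further comparison is needed.
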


Choose a basis in the group of characters \(B^\vee\) of \(B\subset \GL_n\): \(\chi_k(b)=b_{kk}\), \(k=1,\dots,n\). RFr a matrix \(B^2\) factorization \(\calF\) we denote by \(\calF\langle \chi',\chi''\rangle\) the matrix factorization with
the twisted \(B^2\)-action. In particular, we can consider the twisted version of the units. The  same argument as before
implies that these elements form a large commutative subalgebra related to the algebra of Jucys-Murphy
elements \cite{OblomkovRozansky17}.

\begin{proposition}
  The elements \(\und{\calc}_\parallel\langle \chi',\chi''\rangle\), \(\und{\calcr}_\parallel\langle \chi',\chi''\rangle\),
  \(\chi',\chi''\in B^\vee\) mutually commute. The elements \(\und{\calc}_\parallel\langle 1,0\rangle\), \(\und{\calcr}_\parallel\langle 1,0\rangle\) are the central elements of the convolution algebras.
\end{proposition}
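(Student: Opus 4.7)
The plan is to establish both claims by direct convolution computations, using the presentation of the units as Koszul push-forwards from the diagonal subvarieties and following the template of Proposition~7.1 and Corollary~7.2 of \cite{OblomkovRozansky16}, which already handles the untwisted case. The twist $\langle\chi',\chi''\rangle$ only modifies the $B^2$-equivariant structure of a matrix factorization while leaving the underlying complex intact, so the operations of pull-back, tensoring with the Chevalley--Eilenberg resolution, and push-forward each transport the twist in a manifestly additive way; this is the observation driving every step below.

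First I would compute the convolution
\[
\und{\calc}_\parallel\langle\chi_1',\chi_1''\rangle\star\und{\calc}_\parallel\langle\chi_2',\chi_2''\rangle
\]
directly from \eqref{eq:big-con}. Both factors are supported on the diagonal loci $\{g_{12}=e\}$ and $\{g_{23}=e\}$ after applying $\pi_{12}^*$ and $\pi_{23}^*$ respectively, so the tensor product is supported on the triple diagonal inside $\ucx_3$; taking $\CE_{\frn^{(2)}}$ resolves the remaining diagonal relations, and $\pi_{13*}$ collapses the answer back onto $\und{\calc}_\parallel$. Tracking the $B^2$-characters through each step then yields $\und{\calc}_\parallel\langle\chi_1'+\chi_2',\chi_1''+\chi_2''\rangle$. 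Since $B^\vee$ is abelian, commutativity follows. The short-category statement follows either by repeating the computation with $\stry$ in place of $\star$, or by invoking \eqref{eq:Kn-conv} to transport the result through the Knörrer functor $\Phi$.

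For centrality of $\und{\calc}_\parallel\langle 1,0\rangle$, I would compute $\und{\calc}_\parallel\langle 1,0\rangle\star\calF$ and $\calF\star\und{\calc}_\parallel\langle 1,0\rangle$ for an arbitrary $\calF\in\uMF_{\tau,\tau}$. Using the unit property, both sides identify with $\calF$ equipped with an extra character twist; the point is that this leftover twist, arising from the character $\chi'=1$ on the first $B$ and $\chi''=0$ on the second, lands on the common diagonal copy of $B$ after the push-forward and can therefore be transported from one side of $\calF$ to the other without ambiguity. Consequently both convolutions agree, proving that $\und{\calc}_\parallel\langle 1,0\rangle$ lies in the center; the corresponding statement for $\und{\calcr}_\parallel\langle 1,0\rangle$ follows by the same argument in the short category (or via $\Phi$).

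The main obstacle will be the centrality step: one must verify carefully that the character transported ``inside'' $\calF$ across the Chevalley--Eilenberg differential and the push-forward along $\pi_{13}$ matches the character inserted from the outside, as actual isomorphisms of matrix factorizations rather than mere graded vector spaces. This is notation-heavy but ultimately a bookkeeping exercise in the $B^2$-equivariant structure on the diagonal, and the specific choice of $\chi''=0$ is exactly what makes the two transports coincide. The commutativity statement, by contrast, reduces almost mechanically to the additivity of the twist.
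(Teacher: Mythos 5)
Your commutativity argument is sound and is essentially the paper's: the text simply defers to the arguments of Proposition 7.1 and Corollary 7.2 of \cite{OblomkovRozansky16}, and the additivity of the character twist under convolution (together with the fact, recorded right after the proposition, that the twisted unit depends only on the sum $\chi'+\chi''$) gives mutual commutativity exactly as you describe. Transporting the statement to the short category via the \Knr\ functor $\Phi$ is also the intended route.

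The centrality step, however, has a genuine gap. You attribute the centrality of $\und{\calc}_\parallel\langle 1,0\rangle$ to the choice $\chi''=0$, saying this is ``exactly what makes the two transports coincide.'' If that were the mechanism, every element $\und{\calc}_\parallel\langle \chi_k,0\rangle$ would be central; but these are precisely the categorified Jucys--Murphy elements, which generate a large commutative subalgebra that is \emph{not} central --- the sentence preceding the proposition makes exactly this distinction. What singles out $\langle 1,0\rangle$ is the character itself: here $1$ stands for $\sum_k\chi_k=\det$, the character of $B$ that extends to a character of the whole group $G$ (equivalently, the Weyl-invariant one). It is this extension that allows the twist to be slid across the middle $G$-factors of the convolution space, past an arbitrary $\calF\in\uMF_{\tau,\sigma}$, from the left end of the convolution to the right end. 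For a character that does not extend to $G$ no such transport exists, which is why the individual JM twists only commute among themselves. Your centrality argument must invoke this property of $\det$ explicitly; as written, it proves too much.
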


Proposition 9.1 of \cite{OblomkovRozansky16} implies that a twist of the unit by a pair of the characters \(\chi',\chi''\) depends only on their sum:
\[\und{\calc}_\parallel\langle\chi',\chi''\rangle\sim\und{\calc}_{\parallel}\langle \chi'+\beta,\chi''-\beta\rangle,\quad \und{\calcr}_\parallel\langle\chi',\chi''\rangle\sim\und{\calcr}_{\parallel}\langle \chi'+\beta,\chi''-\beta\rangle.\]
\section{Braid group action}
\label{sec:braid-group-action}

In this section we adjust the results of \cite{OblomkovRozansky16} in order to obtain the
homomorphisms from the braid group to the convolution algebras:
\[\und{\Phi^{br}}: \Br_n\to \MF_{\bullet,\bullet},\quad \und{\overline{\Phi^{br}}}: \Br_n\to
  \uoMF_{\bullet,\bullet}.\]

First, we construct the matrix factorizations for elementary two-strand braids, and then extend the homomorphism to arbitrary braids through the induction functor, verifying the braid relations on three-strand braids.

\subsection{Braid groups}
\label{sec:braid-groups}

The braid group \(\Br_n\) is generated by elements \(\sigma(i)\), \(i=1,\dots,n-1\) with relations:
\[\sigma(i)\sigma(i+1)\sigma(i)=\sigma(i+1)\sigma(i)\sigma(i+1),\quad i=1,\dots,n-2.\]
There is a natural surjective homomorphism:
\[\Sigma: \Br_n\to S_n\]
with kernel consisting of elements of the group of pure braids \(\mathfrak{PBr}_n\). Let us abbreviate
the projection by
\(\Sigma(\tau)=\dot{\tau}\).

It is convenient for us to work with labeled braids. These braids form a groupoid \(\LBr_n\) which could be described
as a subset of \(S_n\ti \Br_n \ti S_n\):
\[ (s,\sigma,t)\in \LBr_n\mbox{ iff } \dot{\sigma}t=s.\]
We use notation \({}_s\sigma_t\) for \((s,\sigma,t)\in \LBr_n\). The elements \({}_s\sigma_t\) and \({}_r\tau_p\)
are composable if \(t=r\) and \({}_s\sigma_t\cdot {}_t\tau_p={}_s\sigma\tau_p\).

\subsection{Two strand case}
\label{sec:two-strand-case}
Let \(\tau\) be a generator of \(S_2\) and \(\sigma_1\) is the positive generator of \(\Br_2\).
Let us also fix the coordintes on the space \(\und{\scXr_2}\):
\[X=
  \begin{bmatrix}
    x_{11}& x_{12}\\ 0&x_{22}
  \end{bmatrix},\quad
  Y=
  \begin{bmatrix}
    y_{11}&y_{12}\\ 0&y_{22}
  \end{bmatrix},\quad
  g=
  \begin{bmatrix}
    a_{11}&a_{12}\\
    a_{21}&a_{22}
  \end{bmatrix}
 \]
 Since the potential \(\Wr_{1,\tau}=\Wr_{\tau,1}\) factors
 \[\Wr_{1,\tau}(X,g,Y)=\tilde{x}_0\tilde{y}_0,\quad \tilde{x}_0=(x_{11}-x_{22})a_{11}+x_{12}a_{21},\quad
 \tilde{y}_0=((y_{11}-y_{22})a_{22}-y_{12}a_{21})/\det(g).\]

Thus we can define the Koszul matrix factorizations with the potentials \(\Wr_{\tau,1}\), \(W_{\tau,1}\):
\[\und{\calcr}_+=\mathrm{K}^{\Wr_{\tau,1}}(\tilde{x}), \quad \und{\calc}_+=\Phi(\und{\calcr}_+).\]

We use same notation for the corresponding matrix factorizations from \(\uMF_{1,\tau}\) and \(\uoMF_{1,\tau}\).

Following blue-prints of \cite{OblomkovRozansky16} we define the inverses of the above matrix factorizations:
\[\und{\calcr}_-=\und{\calc}_+\langle-\chi_1,\chi_2\rangle,\quad \und{\calcr}_-=\und{\calcr}_+\langle-\chi_1,\chi_2\rangle. \]

The  agruments of section 3.3 \cite{OblomkovRozansky17} imply that the  we can switch \(\chi_1\) and \(\chi_2\) in the last
formula:
\[\und{\calc}_-=\und{\calc}_+\langle\chi_2,-\chi_1\rangle,\quad \und{\calcr}_-=\und{\calcr}_+\langle\chi_2,-\chi_1\rangle. \]

These matrix factorizations correspond to the positive and negative crossings on two strands:
\begin{proposition}
  We have
  \[\und{\calc}_+\star\und{\calc}_-\sim\und{\calc}_-\star\und{\calc}_+\sim \und{\calc}_\parallel,\quad
  \und{\calcr}_+\star\und{\calcr}_-\sim\und{\calcr}_-\star\und{\calcr}_+\sim \und{\calcr}_\parallel.\]
\end{proposition}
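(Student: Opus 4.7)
The plan is to first reduce the long-category identity to its short-category counterpart. By the propositions of Section 2.3, the Knörrer functor $\Phi$ intertwines $\star$ with $\stry$ and has a one-sided left inverse $\Psi$ with $\Psi\circ\Phi=1$. Hence an equivalence $\und{\calcr}_+\stry\und{\calcr}_-\sim\und{\calcr}_\parallel$ implies $\und{\calc}_+\star\und{\calc}_-\sim\und{\calc}_\parallel$ upon applying $\Phi$ (using $\Phi(\und{\calcr}_\parallel)\sim \und{\calc}_\parallel$, which is immediate from the definitions and the Knörrer reduction of the quadratic piece of $W^\circ_{\tau,\tau}$ on the diagonal locus $\und{\scX_2}(B)$). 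So from now on I would work entirely in the short category.

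The next step is to carry out the convolution $\und{\calcr}_+\stry\und{\calcr}_-$ explicitly via formula \eqref{eq:shrt-con}. The factorization $\und{\calcr}_+=\mathrm{K}^{\Wr_{1,\tau}}(\tilde x_0)$ is a two-term Koszul complex with Koszul generator $\tilde x_0$ and cogenerator $\tilde y_0$ satisfying $\tilde x_0\tilde y_0=\Wr_{1,\tau}$. The element $\und{\calcr}_-=\und{\calcr}_+\langle-\chi_1,\chi_2\rangle$ is the same underlying complex on $\uscXr$ with a rescaled $B^2$-equivariant structure; by the argument of Section 3.3 of \cite{OblomkovRozansky17}, this twist is equivalent to replacing the Koszul arrows by their "dual" ones, so that $\und{\calcr}_+\stry\und{\calcr}_-$ is the total Koszul factorization on $\und{\scXr_3}$ associated to the two generators $\bar\pi_{12,y,\tau}^*(\tilde x_0)$ and $\bar\pi_{23,y,\tau}^*(\tilde x_0)$, with potential equal to $\bar\pi_{13,y,1}^*(\Wr_{1,1})$ by the proposition of Section 2.3.

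The third step is to identify the pushforward. Exactly as in the non-monodromic argument of \cite{OblomkovRozansky16} for the inverse identity on two strands, the pair of pulled-back Koszul generators forms a regular sequence cutting out in $\und{\scXr_3}$ the subvariety on which the product $g_{12}g_{23}$ lies in $B$ (equivalently, the composite permutation is trivial) together with the diagonal matching of the $X$- and $Y$-entries dictated by the potential. Chevalley-Eilenberg reduction along $\frn^{(2)}$ eats the remaining middle $\frn$-directions, and the projection $\bar\pi_{13,y,1}$ identifies what is left with the structure sheaf of $\und{\scXr_2}(B)\subset\und{\scXr_2}$. A check that the characters of the two factors assemble correctly — the twists $\langle 0,0\rangle$ on $\und{\calcr}_+$ and $\langle-\chi_1,\chi_2\rangle$ on $\und{\calcr}_-$ contribute a middle-$B$ character that is precisely compensated by the Chevalley-Eilenberg invariants — yields $\und{\calcr}_\parallel$ with trivial outer twist, as desired. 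Commutativity $\und{\calcr}_-\stry\und{\calcr}_+\sim\und{\calcr}_\parallel$ is obtained by exchanging the roles of the two factors; the computation is symmetric because the swap amounts to exchanging $\tilde x_0$ with $\tilde y_0$, which preserves the Koszul factorization.

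The main obstacle will not be the Koszul bookkeeping, which closely follows the template of the original proof in \cite{OblomkovRozansky16}, but rather the careful tracking of the monodromic $B^2$-action and the associated $\chi_1,\chi_2$ twists at each stage of the convolution. In particular, one must verify that the middle-$B$ character generated by combining $\und{\calcr}_+$ with $\und{\calcr}_-\langle-\chi_1,\chi_2\rangle$ is compatible with the invariants taken in the Chevalley-Eilenberg complex, so that the resulting matrix factorization sits in the untwisted component $\uoMF_{1,1}$. Once this bookkeeping is in place, equivalences \eqref{eq:I-conv} and \eqref{eq:Kn-conv} immediately upgrade the short-category identity to its long-category counterpart, finishing the proof.
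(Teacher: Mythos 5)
Your overall architecture (reduce to the short category via $\Phi$, compute the convolution of the two Koszul factorizations, identify the pushforward with the unit) is the right template, but you are missing the one idea that makes the paper's argument short, and one of your intermediate claims is false as stated. The paper does \emph{not} compute the full curved convolution with its character twists --- precisely the step you flag as the ``main obstacle'' and then defer. Instead it observes that the underlying complexes $\und{\calcr}_+^{+}=\mathrm{K}[\tilde{x}]$ and $\und{\calcr}_-^{+}=\mathrm{K}[\tilde{x}]\langle\chi_2,-\chi_1\rangle$ are independent of the $Y$-variables, so Theorem~9.6 of \cite{OblomkovRozansky16} applies verbatim to give
\[\bar{\pi}_{13,y,\rho,*}\bigl(\CE_{\frn^{(2)}}(\bar{\pi}_{12,y,\rho}^*(\und{\calcr}_+^{+})\oti_B\bar{\pi}_{23,y,\rho}^*(\und{\calcr}_-^{+}))^{T^{(2)}}\bigr)\sim\mathrm{K}[g_{21}],\]
and then invokes the uniqueness-of-extension lemma (Lemmas~3.1 and 3.2 of \cite{OblomkovRozansky16}): since $g_{21}$ is irreducible and $\Wr_{\tau,\tau}\in(g_{21})$, the complex $\mathrm{K}[g_{21}]$ has a \emph{unique} lift to a matrix factorization with that potential, which must therefore be $\und{\calcr}_\parallel=\mathrm{K}^{\Wr_{\tau,\tau}}[g_{21}]$. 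All of the curved and equivariant bookkeeping you propose to carry out by hand is absorbed by this uniqueness statement; without it your sketch stalls exactly where you admit it does.

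Separately, your geometric description of the intermediate object is incorrect: the common zero locus of $\bar{\pi}_{12,y,\tau}^*(\tilde{x}_0)$ and $\bar{\pi}_{23,y,\tau}^*(\tilde{x}_0)$ is not ``the subvariety on which $g_{12}g_{23}$ lies in $B$''. Both generators involve only $X$ and the group variables, their joint vanishing is a codimension-two condition with extra components (for instance where $X$ is scalar both vanish identically and $g_{12},g_{23}$ are unconstrained), and no ``diagonal matching of the $X$- and $Y$-entries'' is cut out at this stage. The support on $\{(g_{12}g_{23})_{21}=0\}$ emerges only after the derived pushforward along $\bar{\pi}_{13,y,\rho}$ over the $\PP^1$-fibers of the Chevalley--Eilenberg reduction; that computation is the content of the cited Theorem~9.6 and cannot be replaced by the naive ``regular sequence cuts out the unit'' picture. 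With the uniqueness lemma in hand, the second order of multiplication and the passage back to the long category via $\Phi$ go through as you indicate.
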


\subsection{Braid group generators}
\label{sec:braid-group-gener}

Now we can describe the generators of the braid group and state our main result of the section. Indeed, let us define variations of
the induction functors from the previous section:
\[\ind_{k,k+r}:\uMF(\und{\scX_2}(G_{r+1}),W_{\sigma,\tau}\to\uMF(\und{\scXr_2}(G_n),W_{\sigma,\tau}) ),\] \[\overline{\ind}_{k,k+r}:\uMF(\und{\scXr_2}(G_{r+1}),\Wr_{\sigma,\tau})\to \uMF(\und{\scXr_2}(G_n),\Wr_{\sigma,\tau}),\]
as  compositions:
\[\ind_{k,k+r}(\calF)=\ind_k(\und{\calc}_\parallel,\ind_r(\calF,\und{\calc}_\parallel)),\quad
  \overline{\ind}_{k,k+r}(\bar{\calF})=\overline{\ind}_k(\und{\calcr}_\parallel,\overline{\ind}_r(\bar{\calF},\und{\calcr}_\parallel))\]
With these functors we define the generators:
\[\und{\calc}_\pm^{(i)}=\ind_{i,i+1}(\und{\calc}_\pm),\quad \und{\calcr}_\pm^{(i)}=\ind_{i,i+1}(\und{\calcr}_\pm).\]

The  elements \({}_{\tau_i}\sigma(i)_1\) and \({}_1\sigma(i)_{\tau_i}\) generate the groupoid \(\LBr_n\), hence its representation is determined by their images:
\begin{theorem}\label{thm:conv}
  The assignments:
  \[{}_{\tau_i}\sigma^{\pm 1}(i)_1\mapsto \und{\calc}_\pm^{(i)}, \quad {}_1\sigma^{\pm 1}(i)_{\tau_i}\mapsto\und{\calc}_\pm^{(i)}, \]
  \[{}_{\tau_i}\sigma^{\pm 1}(i)_1\mapsto \und{\calcr}_\pm^{(i)},\quad  {}_1\sigma^{\pm 1}(i)_{\tau_i}\mapsto\und{\calcr}_\pm^{(i)}, \]
  extend to groupoid homomorphisms:
  \[\Phi^{\lbr}:\LBr_n\to \uMF_{\bullet,\bullet},\quad\bar{\Phi}^{\lbr}:\LBr_n\to \uoMF_{\bullet,\bullet}.\]
\end{theorem}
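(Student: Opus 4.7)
The plan is to reduce the verification of the groupoid relations to three standard checks: invertibility of the two-strand generators, far commutativity of generators on disjoint strands, and the three-strand braid relation. Since the Knörrer-type functor $\Phi$ carries $\overline{\ind}_{k,k+r}(\und{\calcr}_\pm)$ to $\ind_{k,k+r}(\und{\calc}_\pm)$ by the propositions of Section~\ref{sec:induction-functors}, and since $\Psi\circ\Phi=\Id$ by \eqref{eq:psi-phi}, it suffices to verify all relations in one of the two categories; I will work in the short category $\uoMF_{\bullet,\bullet}$ where the Fourier symmetry is manifest and transport the result along $\Phi$. Invertibility ${}_{\tau_i}\sigma(i)_1\cdot{}_1\sigma(i)^{-1}_{\tau_i}=\mathrm{id}$ is already established by the two-strand proposition $\und{\calcr}_+\stry\und{\calcr}_-\sim\und{\calcr}_\parallel$ combined with the fact that $\overline{\ind}_{i,i+1}$ is a convolution homomorphism and carries the unit to the unit.

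For far commutativity $\sigma(i)\sigma(j)=\sigma(j)\sigma(i)$ with $|i-j|\geq 2$, I would write both sides as induction from a product of three blocks using the parabolic $P_{\min(i,j)}\times P_{|i-j|-1}\times P_{n-\max(i,j)-1}$, apply the proposition stating that $\overline{\ind}_k$ is multiplicative with respect to $\stry$, and observe that in distinct tensor factors $\und{\calcr}_+$ and $\und{\calcr}_\parallel$ commute trivially since they live on disjoint sets of variables. The labels on the two sides match because the permutations in $S_n$ induced by $\tau_i$ and $\tau_j$ commute.

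The main obstacle is the braid relation $\sigma(i)\sigma(i+1)\sigma(i)=\sigma(i+1)\sigma(i)\sigma(i+1)$, which, by a similar induction argument, reduces to the three-strand case. Here one must exhibit an explicit isomorphism
\[
\und{\calcr}_+^{(1)}\stry\und{\calcr}_+^{(2)}\stry\und{\calcr}_+^{(1)}\ \cong\ \und{\calcr}_+^{(2)}\stry\und{\calcr}_+^{(1)}\stry\und{\calcr}_+^{(2)}
\]
in $\uoMF_{\sigma,\tau}$ for the appropriate labels $(\sigma,\tau)$ determined by the positions in $\LBr_3$. My approach would be to verify the analogous relation for the corresponding objects $\und{\calc}_+^{(i)}$ in the long category $\uMF_{\bullet,\bullet}$, by pulling back to the space $\scX$ using $\underline{i}^*$: the relations \eqref{eq:I-conv} and \eqref{eq:Kn-conv} ensure that $\underline{i}^*$ intertwines $\stry$ on the monodromic side with the convolution $\bar{\star}$ of \cite{OblomkovRozansky16}, and on that side the three-strand braid relation is the main result of \cite{OblomkovRozansky16}. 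The non-routine step is tracking the labels $(\sigma,\tau)$: the potentials $\Wr_{\sigma,\tau}$ depend on permutations, and one must ensure that the triple convolutions on each side land in the same category $\uoMF_{\sigma,\tau}$ and that the isomorphism is compatible with the $B^3$-equivariant enrichment. I would handle this by noting that the $\delta\Wr_{\sigma,\tau}$ piece enters linearly via the Koszul factors defining $\und{\calcr}_\pm$, so the explicit Koszul model of the three-strand product from \cite{OblomkovRozansky16} extends to the present setting once one checks that the twists by $\chi_1,\chi_2$ commute past $\stry$ as in the corresponding lemmas there.

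Finally, to obtain $\Phi^{\lbr}$ and $\bar\Phi^{\lbr}$ simultaneously, I would invoke the proposition stating $\ind_k\circ(\Phi_k\times\Phi_{n-k})=\Phi_n\circ\overline{\ind_k}$ to deduce that $\Phi_n$ sends the short-category generators to the long-category ones, after which the groupoid relations verified in $\uoMF$ transport to $\uMF$ along $\Phi_n$. Composition with $\Psi$ ensures no information is lost, completing the construction of both homomorphisms.
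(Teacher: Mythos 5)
Your overall architecture --- reduce to two-strand invertibility plus the three-strand braid relation, handle the multi-strand and far-commutativity cases via the induction functors, work in the short category and transport along $\Phi$, $\Psi$ --- matches the paper's. The gap is precisely in the step you flag as the main obstacle. You propose to establish the three-strand relation by applying $\underline{i}^*$ and invoking the braid relation of \cite{OblomkovRozansky16} in the old category $\MF$. But $\underline{i}^*$ is restriction to a subspace (it kills the dependence on the diagonal of $Y$), so an isomorphism $\underline{i}^*(\mathrm{LHS})\cong\underline{i}^*(\mathrm{RHS})$ does not by itself yield an isomorphism in $\uoMF_{\sigma,\tau}$: the functor is not conservative, and the two sides could a priori differ by data invisible after the pullback. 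Your remark that ``the explicit Koszul model \dots extends to the present setting'' gestures at the missing ingredient but does not supply it.

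What closes this gap in the paper is the uniqueness-of-extension mechanism (Lemmas 3.1, 3.2 of \cite{OblomkovRozansky16}): one computes the $Y$-independent ``plus part'' of each convolution explicitly as a Koszul complex on a regular sequence --- for instance the convolution of $\und{\calcr}_+^{(1)}$ with $\und{\calcr}_+^{(2)}$ has plus-part $\mathrm{K}[f,g,a_{31}]$ with $(f,g,a_{31})$ regular and $\Wr_{1,t_{12}t_{23}}\in(f,g,a_{31})$ --- and then invokes the fact that a complex supported on such a regular sequence admits a \emph{unique} lift to a matrix factorization with the given potential. It is this uniqueness, applied after identifying the plus-parts $\und{\calcr}_{121}^+\cong\und{\calcr}_{212}^+$ via Lemmas 10.4, 11.5 and Corollary 11.6 of \cite{OblomkovRozansky16}, that promotes the identification to the monodromic category; the same device handles the inverse relation (via Theorem 9.6 of \cite{OblomkovRozansky16} together with the irreducibility of $g_{21}$, which forces $\und{\calcr}_\parallel$ to be the unique extension of $\mathrm{K}[g_{21}]$). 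Without the regular-sequence/unique-lift argument, your reduction to the old category establishes the relation only after applying $\underline{i}^*$ and does not go through.
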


Recall a technical tool for computations with matrix factorizations from \cite{OblomkovRozansky16}.
Denote by \(G_{S}\subset G_n\), \(S\subset \{1,\dots,n\}\)
the subgroup whose Lie algebra is generated by \(\frb\) and \(E_{ij}\), \(i>j,i,j\in S\). Next, define
smaller convolution spaces \(\und{\scX_3}(G_S,G_T)=\frg\ti G_S\ti G_T\ti \frb\), \(\und{\scX_2}(G_S)=\frg\ti G_S\ti G \ti \frb\) .
The  transitivity of the induction functors implies that
\begin{equation}\label{eq:ind-conv}
   \calG \stry\ind_{k,k+r}(\calF)=\bar{\pi}_{13,y,\rho*}(\CE_{\frn^{(2)}_{r+1}}(\bar{\pi}_{12,y,\rho}^*(\calG)\oti_{B_{r+1}}\bar{\pi'}_{23,y,\rho}^*(\calF))).
\end{equation}
where the maps \(\bar{\pi}_{13,y,\rho}\) and \(\bar{\pi}_{12,y,\rho}\) are the restrictions of the
corresponding maps to the space \(\und{\scX_3}(G_{\{k,\dots,k+r\}},G)\) and \(\bar{\pi'}_{23,y,\rho}\) is the composition
of \(\bar{\pi}_{23,y,\rho}\) with a natural projection \(\und{\scX_2}(G_{\{k,\dots,k+r\}})\to \und{\scX_2}(G_r)\).

\begin{proof}[Proof of theorem~\ref{thm:conv}]
We have  to show that the matrix factorization \(\und{\calc}_\pm^{(i)}\) satisfy the braid relations for
  the three-stranded braids:
  \begin{equation}
    \label{eq:br-rel}
    \und{\calc}_+^{(1)}\stry \und{\calc}_+^{(2)}\stry\und{\calc}_+^{(1)}\sim \und{\calc}_+^{(2)}\stry \und{\calc}_+^{(1)}\stry\und{\calc}_+^{(2)}
  \end{equation}
together with the relation
  \begin{equation}
    \label{eq:br-inv}
    \und{\calc}_+\stry \und{\calc}_-\sim \und{\calc}_\parallel.
  \end{equation}
  which says that $\und{\calc}_-$ is the inverse of $\und{\calc}_+$ with respect to the convolution $\stry$.
  The relations in the multi-strand case follow from the properties of the induction functors.

First, we prove \eqref{eq:br-inv}. Observe that both \(\und{\calc}_+\)  and \(\und{\calc}_-\) are
  matrix factorizations that extend the Koszul complexes:
  \[\und{\calcr}_+^+=\mathrm{K}[\tilde{x}],\quad \und{\calcr}_-^+=\mathrm{K}[\tilde{x}]\langle \chi_2,-\chi_1\rangle. \]
  In particular, the differentials in these complexes do not depend on the entries of \(Y\)-matrix. Hence Theorem 9.6
  from \cite{OblomkovRozansky16} implies that
\[\bar{\pi}_{13,y,\rho,*}(\CE_{\frn^{(2)}}(\bar{\pi}_{12,y,\rho}^*(\und{\calcr}_+^+)
  \oti_B\bar{\pi}_{23,y,\rho}^*(\und{\calc}_-^+))^{T^{(2)}})\sim \mathrm{K}[g_{21}].\]

Finally, observe that since \(g_{21}\) is an irreducible polynomial inside the structure ring, the matrix
factorization \(\und{\calcr}_\parallel=\mathrm{K}^{\Wr_{x,x}}[g_{21}]\) is a unique extension of \(\mathrm{K}[g_{21}]\) by Lemmas 3.1, 3.2
of \cite{OblomkovRozansky16}. Thus the equation \eqref{eq:br-inv} follows.

In our discussion of braid relations we assume that both sides of \eqref{eq:br-rel} are objects of the category
\(\uoMF_{1,c}\), \(c=t_{12}t_{23}t_{12}=t_{23}t_{12}t_{23}\).
To prove the braid relation we need to combine the previous construction with the formula \eqref{eq:ind-conv}. Indeed, first
we argue that
\begin{equation}\label{eq:rel12}
  \und{\calcr}_+^{(1)}\stry\und{\calcr}_+^{(2)}\sim \und{\calcr}_{12},\quad \und{\calcr}_+^{(2)}\stry\und{\calcr}_+^{(1)}\sim \und{\calcr}_{21}.\end{equation}
\begin{equation}\label{eq:rel21}\und{\calcr}_{12}=\mathrm{K}^{\Wr_{\bullet,\bullet}}[f,g,a_{31}], \quad \und{\calcr}_{21}=\mathrm{K}^{\Wr_{\bullet,\bullet}}[h,k,(a^{-1})_{31}],\end{equation}
\[f=(x_{11}-x_{22})a_{11}+x_{12}a_{21},\quad g=(a^{-1})_{33}(x_{11}-x_{33})-(a^{-1})_{32}x_{32}-(a^{-1})_{31}x_{13}\]
\[ h=(a^{-1})_{32} x_{23}+(a^{-1})_{33}(x_{33}-x_{22}),\quad k=a_{11}(x_{11}-x_{33})+a_{21}x_{12}+a_{31}x_{13}.\]

We prove only the first formula of \eqref{eq:rel12}, the proof of the second one is similar.
Consider the formula \eqref{eq:ind-conv} for \(k=2,r=1\), \(\calF=\und{\calcr}_+\), \(\calG=\und{\calcr}_+^{(1)}\).
We have \(\und{\calcr}_+^{(1)}=\mathrm{K}^{\Wr_{\bullet,\bullet}}[\tilde{x},a_{31},a_{32}]\) where \(a_{ij}\) are coordinates on \(G\)
in the product \(\und{\scX_2}=\frb\ti G\times\frb\). The \(\und{\calcr}_+^{(1)+}=\mathrm{K}[\tilde{x},a_{31},a_{32}]\) is regular hence
\(\und{\calcr}_+^{(1)+}\) is a unique lift of the complex to the matrix factorization.

Next we observe that Lemma 10.4 of \cite{OblomkovRozansky16} implies that
\[\und{\calcr}^{(1)+}_+\stry \ind_{2,3}(\und{\calcr}_+^+)\sim \und{\calcr}_{12}^+,\]
where \(\und{\calcr}_{12}^+=\mathrm{K}[f,g,a_{31}]\). By proposition 10.2 the sequence \(f,g,a_{31}\) is regular in the
structure ring. Since \(\Wr_{1,t_{12}t_{23}}\in (f,g,a_{31})\) by Lemmas 3.1, 3.2 there is a unique extension of the complex
\(\und{\calcr}_{12}^+\) to a matrix factorization with the corresponding potential and the equation \eqref{eq:rel12} follows.

To complete our proof we need to use \eqref{eq:ind-conv} again for \(\calG=\und{\calcr}_{12}\) and \(\calF=\und{\calcr}_+\) and \(k=1,r=1\).
It is shown in Lemma 11.5 of \cite{OblomkovRozansky16} that formula \eqref{eq:ind-conv} can be used to show
\[\und{\calcr}_{12}^+\stry\und{\calcr}_+^{(1)+}\sim \und{\calcr}_{121}^+, \]
where \(\und{\calcr}_{121}^+\) is a complex that is quasi-isomorphic to a module in zeroth homological degree which is
annihilated by the potential \(\Wr_{1,c}\).

It is also shown in corollary 11.6 of \cite{OblomkovRozansky16} that
\[\und{\calcr}_{21}^+\stry\und{\calcr}_+^{(2)+}\sim \und{\calcr}_{212}^+, \]
and \(\und{\calcr}_{212}^+\) is isomorphic to \(\und{\calcr}_{121}^+\) hence by the uniqueness of the extension lemma the braid relations
follow.
\end{proof}

In the rest of the paper we use notations \(\und{\calc}_\alpha\), \(\und{\calcr}_\alpha\), \(\alpha\in \LBr_n\) for the corresponding
matrix factorization.

\section{Fourier transform}
\label{sec:involution}

\subsection{Specialization to unlabeled braids.}
\label{sec:spec-unlab-braids}

The  action of the group \(S_n\) on the last factor of \(\und{\scXr_2}\) induces the isomorphism of the categories:
\[\tau\cdot: \uoMF_{\sigma,\rho}\to \uoMF_{\sigma\cdot\tau,\rho\cdot\tau}.\]

Let us choose a special element in the orbit of the action:
\[\uoMF_\sigma:=\uoMF_{\sigma,1}.\]

We denote by \(\uoMF_\cdot\) the union of such categories. This union has a natural monodial structure:
\[\calF\stty\calG:=\calF\stry (\tau^{-1}\cdot \calG),\quad \calG\in \uoMF_\tau.\]

Theorem~\ref{thm:conv} implies that there is a homomorphism from the braid group to the
last monoidal category:

\begin{corollary}
The assignments:
  \[\sigma^{\pm 1}(i)\mapsto \und{\calcr}_\pm^{(i)}, \]
  extend to the group homomorphism:
  \[\Phi^{\br}:\Br_n\to (\uoMF_{\bullet,\bullet}, \stty).\]
\end{corollary}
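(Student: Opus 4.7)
The plan is to deduce $\Phi^{\br}$ from the groupoid homomorphism $\bar{\Phi}^{\lbr}\colon\LBr_n\to\uoMF_{\bullet,\bullet}$ of Theorem~\ref{thm:conv} by restricting to the ``right-trivialized'' slice. For $\beta\in\Br_n$ I would set
\[
\Phi^{\br}(\beta) := \bar{\Phi}^{\lbr}({}_{\dot{\beta}}\beta_1) \;\in\; \uoMF_{\dot\beta,1} = \uoMF_{\dot\beta}\subset\uoMF_{\bullet,\bullet}.
\]
On the Artin generators this recovers the prescribed values $\und{\calcr}_\pm^{(i)}$ by the very statement of Theorem~\ref{thm:conv}, since $\dot{\sigma(i)}=\tau_i$.

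The only thing to verify is multiplicativity. In $\LBr_n$ one has the composition
\[
{}_{\dot\alpha\dot\beta}(\alpha\beta)_1 \;=\; {}_{\dot\alpha\dot\beta}\alpha_{\dot\beta}\cdot{}_{\dot\beta}\beta_1,
\]
so applying the groupoid homomorphism $\bar{\Phi}^{\lbr}$ (which respects $\stry$) yields
\[
\Phi^{\br}(\alpha\beta) = \bar{\Phi}^{\lbr}({}_{\dot\alpha\dot\beta}\alpha_{\dot\beta})\stry\Phi^{\br}(\beta).
\]
The key intermediate identity I would establish is the $S_n$-equivariance
\[
\bar{\Phi}^{\lbr}({}_{\dot\alpha\dot\beta}\alpha_{\dot\beta}) = \dot\beta\cdot \Phi^{\br}(\alpha),
\]
checked first on Artin generators---where both sides arise from the Koszul factorizations $\und{\calcr}_\pm$ of \S\ref{sec:two-strand-case}, with $\dot\beta\cdot$ merely relabelling the $B^2$-action on the final $\frb$-factor of $\uscXr$---and then extended to arbitrary braids by showing that the twist functor $\tau\cdot$ commutes with the induction functors $\overline{\ind_{k,k+r}}$ (which touch only the active block of indices) and with the $\stry$-convolution itself, since the right-translation on the last factor of $\uscXr$ lifts to a compatible action on the convolution space $\uscXr_3$ commuting with each projection $\bar{\pi}_{ij,y,\rho}$. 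Matching this equivariance against the definition $\calF\stty\calG := \calF\stry(\tau^{-1}\cdot\calG)$ then gives $\Phi^{\br}(\alpha\beta) = \Phi^{\br}(\alpha)\stty\Phi^{\br}(\beta)$; the Artin relations require nothing further, having already been verified at the labeled level in Theorem~\ref{thm:conv}.

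The main obstacle will be making the $S_n$-equivariance precise at the level of convolutions---in particular, tracking how the right-translation $\tau\cdot$ passes through the $B^3$-equivariant enrichment of pulled-back matrix factorizations used in~\eqref{eq:shrt-con}, and confirming that the inverse appearing in $\tau^{-1}\cdot$ is exactly the twist needed to absorb the relabelling $1\mapsto\dot\beta$ produced by the decomposition in $\LBr_n$. Once this bookkeeping is settled, the remainder is formal.
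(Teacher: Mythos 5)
Your proposal is correct and is essentially the argument the paper intends: the corollary is stated there as an immediate consequence of Theorem~\ref{thm:conv}, with no written proof, and the $S_n$-equivariance you isolate as the key step is exactly the fact the paper records later (in the trace section) when it says the relabeling action $\tau\cdot({}_\alpha\sigma_\beta)=({}_{\alpha\tau}\sigma_{\beta\tau})$ is sent by the labeled-braid homomorphism to the functor induced by the $\tau\cdot$ action on the last $\frb$-factor. Your restriction to the slice ${}_{\dot\beta}\beta_1$ and the decomposition ${}_{\dot\alpha\dot\beta}(\alpha\beta)_1={}_{\dot\alpha\dot\beta}\alpha_{\dot\beta}\cdot{}_{\dot\beta}\beta_1$ correctly unwind the definition of $\stty$, so the only content is the compatibility of $\tau\cdot$ with $\stry$ and the induction functors, which you rightly flag as bookkeeping.
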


\subsection{Fourier morphisms}
\label{sec:fourier-morphisms}

There is a natural automorphism on the small space \(\und{\scXr}_2\)
defined by:
\[\FT(X,g,Y)\to (Y,g^{-1}, X).\]

This involution extends to the functor of the triangulated categories:
\[\FT:\uoMF_{\sigma}\to \uoMF_{\tau^{-1}}.\]

The key observation of this paper is that the automorphism, which we call Fourier transform, respects the monoidal structure
of the category \(\uoMF_{\bullet}\):
\begin{theorem}\label{thm:lin}
  For any composable \(\alpha,\beta\in\Br_n\) we have:
  \[
\FT\left(    \FT(\und{\calcr}_\alpha)\stty\FT(\und{\calcr}_\beta)\right)\sim\und{\calcr}_{\beta}\stty \und{\calcr}_{\alpha}.\]
\end{theorem}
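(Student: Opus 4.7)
The overall strategy is to exhibit a geometric involution $\tilde{\FT}$ on the triple convolution space $\und{\scXr_3}=\frb\ti G^3\ti\frb$ which swaps the two composable legs of the convolution and which intertwines with $\FT$ on $\und{\scXr_2}$ via the three projections $\bar{\pi}_{ij,y,\rho}$. The natural candidate is
\[
\tilde{\FT}(X,g_{12},g_{23},Y)=(Y,g_{23}^{-1},g_{12}^{-1},X),
\]
which is an involution reversing the order of the group factors. Once one has such a $\tilde{\FT}$ compatible with the convolution data, the anti-monoidal identity in the theorem follows formally from the definition \eqref{eq:shrt-con} by transporting pullbacks, pushforward, and the Chevalley--Eilenberg along $\tilde{\FT}$.

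The compatibility reduces to three geometric identities. First, $\FT\circ\bar{\pi}_{13,y,\rho}=\bar{\pi}_{13,y,\rho'}\circ\tilde{\FT}$, which is immediate from $(g_{12}g_{23})^{-1}=g_{23}^{-1}g_{12}^{-1}$. Second, that $\tilde{\FT}$ swaps the two inner projections, i.e.\ $\FT\circ\bar{\pi}_{12,y,\rho}\sim\bar{\pi}_{23,y,\rho'}\circ\tilde{\FT}$ and symmetrically. Third, that the potentials match: $\tilde{\FT}^*\bar{\pi}_{13,y,\rho}^*(\Wr_{\sigma,\rho})$ agrees with $\bar{\pi}_{13,y,\rho^{-1}}^*(\Wr_{\rho^{-1},\sigma^{-1}})$ up to the sign change dictated by \eqref{eq:Four-ring}. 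The potential compatibility, using cyclicity of the trace and the identity $\Ad_{g^{-1}}$ appearing after the swap, is an elementary check modeled on the proof of the proposition preceding \eqref{eq:I-conv}. The $B^3$-equivariant enrichment and the $\CE_{\frn^{(2)}}$ machinery transport along $\tilde{\FT}$ because the swap $g_{12}\leftrightarrow g_{23}^{-1}$ conjugates the middle $B$-action of the original convolution to the corresponding middle $B$-action after relabeling.

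The main obstacle is the second identity, the swap of inner projections. On the nose the two maps are asymmetric: $\bar{\pi}_{12,y,\rho}$ forms $\Ad_{g_{23}}(\rho\cdot Y)_{++}+\dia(Y)$ in the third slot, whereas $\bar{\pi}_{23,y,\rho}$ forms $\Ad^{-1}_{g_{12}}(X)_+$ in the first slot. The discrepancy lies in how the diagonal is treated and in the placement of the inverse adjoint. I would resolve it either \emph{(a)} by noting that the diagonal condition $\dia(Y_1)=\dia(Y_2)$ built into $\und{\scXr_2}$ together with the symmetric group action on the second index makes the two prescriptions equivalent up to an invertible coordinate change (so that after tensoring with the Koszul data the two pullbacks produce isomorphic matrix factorizations), or \emph{(b)} by lifting to the long category via $\Phi,\Psi$ of \eqref{eq:psi-phi}, where the convolution space is the more symmetric $\ucx_3$ and $\tilde{\FT}$ acts as a literal swap on the middle factor, then descending via Kn\"orrer.

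A parallel and possibly more economical route is to verify the theorem on the generators $\und{\calcr}_\pm^{(i)}$ and extend by the braid groupoid homomorphism of Theorem~\ref{thm:conv}. Under $\FT$ the defining linear form $\tilde{x}_0=(x_{11}-x_{22})a_{11}+x_{12}a_{21}$ maps, after $X\leftrightarrow Y$ and $g\mapsto g^{-1}$, to $-\det(g)^{-1}\tilde{y}_0$, so $\FT(\und{\calcr}_+)$ is a Koszul matrix factorization of the same potential $\Wr_{\tau,1}=\tilde{x}_0\tilde{y}_0$ cut out by the alternate factor; the uniqueness of extension (Lemmas 3.1--3.2 of \cite{OblomkovRozansky16}) identifies it with a character twist of $\und{\calcr}_+$ itself, hence with an element of the same braid groupoid. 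The anti-monoidality on composable $\alpha,\beta$ then follows from the fact that $\FT$ is an involution, that $\ind_{k,k+r}$ is a homomorphism of $\stty$-algebras, and that in the braid group the analogous identity holds trivially on letters. Combining both strategies---geometric $\tilde{\FT}$ to reduce to two-strand calculations and the explicit Koszul identification to resolve the diagonal asymmetry---I expect to obtain a clean proof.
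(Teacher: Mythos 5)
Your overall architecture coincides with the paper's: the proof there also writes both sides as pullback--tensor--pushforward along two diagrams on the triple space $\frb\ti G\ti G\ti\frb$ and then applies exactly your change of variables $X\mapsto\sigma^{-1}\cdot Y$, $\tau\cdot Y\mapsto X$, $g_{12}\mapsto g_{23}^{-1}$, $g_{23}\mapsto g_{12}^{-1}$; and you have correctly located the real difficulty, namely that after the swap the two inner projections disagree in how they truncate (the $(\cdot)_{++}+\dia(\cdot)$ prescription on one leg versus $(\cdot)_{+}$ on the other).

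However, neither of your proposed resolutions of that discrepancy is the one that closes the argument, and as stated both have gaps. Option (a) cannot be an invertible coordinate change on the convolution space: the two products $\tilde{\pi}_{12,x,\sigma^{-1}}^*(\calF)\oti_B\tilde{\pi}_{23,x,\sigma^{-1}}^*(\calG)$ and $\tilde{\pi}_{12,y,\sigma^{-1}}^*(\calF)\oti_B\tilde{\pi}_{23,y,\sigma^{-1}}^*(\calG)$ live over the same space with the same potential but with genuinely different differentials, and the paper identifies them only up to homotopy, by conjugating with the explicit odd operator $U=\exp(-\sum_i u_i)$, $u_i=\partial_{Y'_{ii}}D'\oti\partial_{X''_{ii}}D''$, whose relations $u_i^2=0$ and $u_iu_j+u_ju_i=0$ come from differentiating the potentials twice along the diagonal of $\frb$. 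This homotopy exists in that form only when the differentials of $\calF$ and $\calG$ are linear along the $\frb$-factors; accordingly the paper does not prove the equivalence for arbitrary objects, but iterates the induction--convolution formula to rewrite $\und{\calcr}_\alpha\stty\und{\calcr}_\beta$ as a long convolution of the elementary Koszul factorizations $\und{\calcr}_{\pm}^{(i)}$, for which linearity holds. Your ``parallel route'' via generators is, on its own, circular: knowing $\FT(\und{\calcr}_+)\cong\und{\calcr}_+$ does not let you extend by the groupoid homomorphism, because the compatibility of $\FT$ with $\stty$ on products is precisely what is being proved; the reduction to generators is legitimate only as a device for placing yourself in the situation where the explicit homotopy applies. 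So the missing ingredient is the homotopy $U$ itself (or a substitute for it), not the geometric swap and not the reduction to letters.
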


  To start our argument we need to map out the morphisms that are used in both sides of the equation.
  In particular, the maps for the LHS fit into the diagram:
  \begin{equation}\label{eq:diagF}
    \begin{tikzcd}
    (X,g_{12},g_{23},Y)\arrow[r,dashed,"\pi_{13,y,\tau}"]\arrow[d,"\pi_{12,y,\tau}"]\arrow[rd,"\pi_{23,y,\tau}"]
    &(X,g_{13},Y)\arrow[r,dashed,"\tau^{-1}\cdot"]&(X,g_{13},\tau\cdot Y)
    \arrow[d,dashed,"\mathfrak{F}"]\\
    (X,g_{12},\Ad_{g_{23}}(\tau\cdot Y)_{++}+\Delta(Y))\arrow[d,"\mathfrak{F}"]&(\Ad_{g_{12}}^{-1}(X)_+,g_{23},Y)\arrow[d,"\tau^{-1}"]& (\tau\cdot Y,g_{13}^{-1},X) \\
    (\Ad_{g_{23}}(\tau\cdot Y)_{++}+\Delta(Y),g_{12}^{-1},X)\arrow[d]    &(\Ad_{g_{12}}^{-1}(X)_+,g_{23},\tau\cdot Y)\arrow[d,"\mathfrak{F}"]&\\
    (\Ad_{g_{23}}(\tau\cdot Y)_{++}+\Delta(Y),g_{12}^{-1},X)    &(\tau\cdot Y,g_{23}^{-1},\Ad_{g_{12}}^{-1}(X)_+)&
  \end{tikzcd},
\end{equation}
where \(g_{13}=g_{12}g_{23}\).
To perform the convolution for LHS of the equation we need to place \(\und{\calcr}_\alpha\in \MF_{\sigma}\) and \(\und{\calcr}_\beta\in\MF_{\tau^{-1}}\)
at the bottom of the diagram then pull-back along the solid arrows going up and push-forward along the dashed arrows. The  same algorithm applies for the RHS of the equation but with the diagram:
\begin{equation}\label{eq:diagNoF}
  \begin{tikzcd}
    (X,g_{12},g_{23},Y)\arrow[r,dashed,"\bar{\pi}_{13,y,\sigma^{-1}}"]\arrow[d,"\bar{\pi}_{23,y,\sigma^{-1}}"]\arrow[dr,"\bar{\pi}_{12,y,\sigma^{-1}}"]
    &(X,g_{13},Y)\arrow[r,dashed,"\sigma\cdot"]& (X,g_{13},\sigma^{-1}\cdot Y)\\
    (\Ad^{-1}_{g_{12}}(X)_+,g_{23},Y )\arrow[d,"\sigma\cdot"]&(X,g_{12},\Ad_{23}(\sigma^{-1}\cdot Y)_{++}+\Delta(Y))\arrow[d] &\\
    (\Ad^{-1}_{g_{12}}(X)_+,g_{23},\sigma^{-1}\cdot Y )&  (X,g_{12},\Ad_{23}(\sigma^{-1}\cdot Y)_{++}+\Delta(Y))&
  \end{tikzcd}
\end{equation}

If we apply to the diagram \eqref{eq:diagF} the change of variable below we will get a diagram that is close to \eqref{eq:diagNoF}:
\begin{equation}
  \label{eq:change-var}
  X\to \sigma^{-1}\cdot Y,\quad \tau\cdot Y\to X,\quad g_{12}\to g_{23}^{-1}, \quad g_{23}\to g_{12}^{-1}.
\end{equation}
The difference between the transformed diagram \eqref{eq:diagF} and \eqref{eq:diagNoF} is the compositions of the solid
arrows. Let us fix notations for the composition of the solid arrows in diagram \eqref{eq:diagNoF}:
\[\tilde{\pi}_{23,y,\sigma^{-1}}(X,g_{12},g_{23},Y)=  (\Ad^{-1}_{g_{12}}(X)_+,g_{23},\sigma^{-1}\cdot Y ),\]
\[\tilde{\pi}_{12,y,\sigma^{-1}}(X,g_{12},g_{23},Y)=(X,g_{12},\Ad_{23}(\sigma^{-1}\cdot Y)_{++}+\Delta(Y)).\]
Let us also fix notation \(\tilde{\pi}_{13,y,\sigma^{-1}}\) for composition of the dashed arrows.
Respectively, the composition of the solid arrows in \eqref{eq:diagF} after transformation
\eqref{eq:change-var} are:
\[\tilde{\pi}_{23,x,\sigma^{-1}}(X,g_{12},g_{23},Y)=(\Ad_{g_{12}}^{-1}(X)_{++}+\Delta(\tau^{-1}\cdot X),g_{23},\sigma^{-1}\cdot Y)\]
\[\tilde{\pi}_{12,x,\sigma^{-1}}(X,g_{12},g_{23},Y)=(X,g_{12},\Ad_{g_{23}}(\sigma^{-1}\cdot Y)_+).\]
Similarly, we denote by \(\tilde{\pi}_{13,x,\sigma^{-1}}\) the composition of the dashed arrows in the diagram  \eqref{eq:diagF}.


Thus our theorem would follow from the equation
\begin{equation}\label{eq:Four}
    \tilde{\pi}_{12,x,\sigma^{-1}}^*(\calF)
    \oti_B\tilde{\pi}_{23,x,\sigma^{-1}}^*(\calG)\sim
    \tilde{\pi}_{12,y,\sigma^{-1}}^*(\calF)
    \oti_B\tilde{\pi}_{23,y,\sigma^{-1}}^*(\calG)
  \end{equation}
  for \(\calF\in\uoMF_\tau\), \(\calG\in\uoMF_\sigma\).
  The  homotopy implies the relation between the two products:
  \[\calF\stty\calG\sim \calF\sttx\calG,\]
  where the product \(\sttx\) is defined with the use of maps \(\tilde{\pi}_{ij,x,\bullet}\).

  It is possible to prove the  formula~\eqref{eq:Four} in the full generality but the formula
  for the homotopy is rather complicated. The  details will appear in the future
  publications and in this note we provide explanations for the simplest possible
  case of the formula, and explain why it is enough for our proof.

  \begin{lemma}\label{lem:lin}
    Let us assume that \(\calF=(M',D',\partial')\in \uoMF_{\tau}\),
    \(\calG=(M'',D'',\partial'')\in \uoMF_{\tau}\) and that the differentials
    \(D',\partial',D'',\partial''\) are linear along the factors \(\frb\) in
    \(\und{\scXr_2}\). Then there is a  homotopy as in \eqref{eq:Four}.
\end{lemma}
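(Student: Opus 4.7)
My plan is to construct a linear interpolation between the two pullback systems, view the resulting family as a flat $\bbA^1$-family of matrix factorizations, and use the linearity hypothesis to conclude that the endpoints $s=0$ and $s=1$ give homotopy equivalent objects.

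First I would pin down exactly where $\tilde{\pi}_{ij,y,\sigma^{-1}}$ and $\tilde{\pi}_{ij,x,\sigma^{-1}}$ differ. Inspection of the formulas displayed before the statement shows that each pair of maps agrees on every factor of $\und{\scXr_2}$ except for a single diagonal correction in $\frh$: in the 12-slot the third coordinate differs by $\Theta_{12}:=\Delta(\sigma^{-1}\cdot Y)-\Delta(Y)$, and in the 23-slot the first coordinate differs by $\Theta_{23}:=\Delta(\Ad_{g_{12}}^{-1}(X))-\Delta(\tau^{-1}\cdot X)$. I then introduce a formal parameter $s\in\bbA^1$ and define $\tilde{\pi}_{ij,s}:=\tilde{\pi}_{ij,y,\sigma^{-1}}+s\cdot\Theta_{ij}$, which recovers the $y$-flavored map at $s=0$ and the $x$-flavored map at $s=1$.

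Because the differentials $D',\partial',D'',\partial''$ are linear in the $\frb$-coordinates, their pullbacks along $\tilde{\pi}_{ij,s}$ are affine in $s$. Hence the tensor product
\[
\mathcal{H}(s)\ :=\ \tilde{\pi}_{12,s}^*(\calF)\oti_B\tilde{\pi}_{23,s}^*(\calG)
\]
is a $B^3$-equivariant family of matrix factorizations over $\und{\scXr_3}\ti\bbA^1$, polynomial in $s$, whose fibers at $s=0$ and $s=1$ are the two sides of \eqref{eq:Four}. Standard $\bbA^1$-homotopy invariance for families of matrix factorizations then produces the desired homotopy equivalence.

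The main obstacle is the compatibility of the potentials along the interpolation: one must verify that
\[
\tilde{\pi}_{12,s}^*(\Wr_{\tau,1})+\tilde{\pi}_{23,s}^*(\Wr_{1,\sigma^{-1}})\ =\ \tilde{\pi}_{13}^*(\Wr_{\tau,\sigma^{-1}})
\]
for every $s\in\bbA^1$. Since $\tilde{\pi}_{ij,s}$ is affine in $s$ and $\Wr$ is quadratic in the $\frb$-coordinates, both sides are polynomials of degree at most two in $s$. The identity at $s=0$ is the Proposition at the start of Section~\ref{sec:short-convolutions}; the identity at $s=1$ is its $x$-flavored analogue, provable by the same trace calculation after the change of variables \eqref{eq:change-var}. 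The quadratic coefficient is a bilinear pairing of the diagonal corrections $\Theta_{12}$ and $\Theta_{23}$, and cancels because both $\Theta_{ij}$ are differences of a diagonal with a permutation of itself, so tracelessness combined with the relation $\dia(Y_1)=\dia(Y_2)$ built into $\und{\scXr_2}$ forces the vanishing. This cancellation is the single genuinely computational step and is precisely where the linearity hypothesis is indispensable.
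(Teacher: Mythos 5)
The decisive step of your argument --- ``standard $\bbA^1$-homotopy invariance for families of matrix factorizations'' --- is not a valid principle, and this is a genuine gap rather than a citation you could fill in. In the homotopy category of matrix factorizations (equivalently, of $2$-periodic curved complexes) the fibers of a family over $\bbA^1$ with fixed potential need not be equivalent: already for the zero potential over a point, the family $\CC\xrightarrow{\ s\ }\CC\xrightarrow{\ 0\ }\CC$ is contractible at $s=1$ and has two-dimensional homology at $s=0$. What an interpolation argument actually requires is a \emph{connection} along $s$, i.e.\ an odd operator $u(s)$ with $\partial_s D_s=[u(s),D_s]$, whose parallel transport then furnishes the equivalence --- and producing that operator is exactly the content of the lemma, not something that comes for free from flatness. (A smaller slip: your $\Theta_{12}$ should be $\Delta(Y)-\Delta\bigl(\Ad_{g_{23}}(\sigma^{-1}\cdot Y)\bigr)$, not $\Delta(\sigma^{-1}\cdot Y)-\Delta(Y)$; the diagonal of a conjugate is not the diagonal.)

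The paper's proof constructs this transport explicitly, and this is where the linearity hypothesis really enters. Differentiating the identities $(D')^2=\Wr_{\tau,1}$ and $(D'')^2=\Wr_{1,\sigma^{-1}}$ in the diagonal directions gives
\[
[\partial_{Y'_{ii}}D',D']=\tau^{-1}\cdot X'_{ii}-(\Ad_{g_{12}}^{-1}X')_{ii},\qquad
[\partial_{X''_{ii}}D'',D'']=\sigma\cdot Y''_{ii}-(\Ad_{g_{23}}Y'')_{ii},
\]
i.e.\ precisely the diagonal corrections $\Theta_{ij}$ are realized as (anti)commutators. One then sets $u_i=\partial_{Y'_{ii}}D'\otimes\partial_{X''_{ii}}D''$; $\frb$-linearity of the differentials forces the second derivatives to vanish, whence $u_i^2=0$ and $u_iu_j+u_ju_i=0$, so $U=\exp(-\sum_i u_i)$ is a finite sum and conjugation by $U$ carries $D'\otimes I+I\otimes D''$ to $\tilde D'\otimes I+I\otimes\tilde D''$. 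In your write-up linearity is used only to check the potential identity along the interpolation (which is fine as far as it goes), but the homotopy itself is never built; to repair the proof you would need to supply the operators $u_i$ (or an equivalent connection) and verify the commutator identities above.
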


\begin{proof}
  We can write an explicit formula for the homotopy. To simplify notations, we
  assume that all equivariant correction differentials vanish.
  We also identify \(\calF\), \(\calG\) with the pull-backs
  \(  \tilde{\pi}_{12,x,\sigma^{-1}}^*(\calF)\),\(\tilde{\pi}_{23,x,\sigma^{-1}}^*(\calG)\).

  The convolution space \(\und{\scX_3}=\frb\ti G\ti G\ti\frb\) has a standard coordinate
  system \(X,g_{12},g_{23},Y\) and two other coordinate systems pulled back along the
  maps \(\tilde{\pi}_{12,x,\sigma^{-1}}\) and \(\tilde{\pi}_{23,x,\sigma^{-1}}\). We denote the last two coordinate system by \((X',g_{12},g_{23},Y')\) and \(X'',g_{12},g_{23},Y''\).
  For  example \(Y'=\Ad_{g_{12}}(\sigma^{-1}\cdot Y)_+\).

  By taking the derivatives
  of the potentials \(\Wr_{\sigma,1}\),
   \(\Wr_{\tau,1}\) we get:
   \begin{equation}\label{eq:derW}
   [\partial_{Y'_{ii}}D',D']=\tau^{-1}\cdot X'_{ii}-(\Ad_{g_{12}}^{-1}X')_{ii},\quad
     [\partial_{X''_{ii}}D'',D'']=\sigma\cdot Y''_{ii}-(\Ad_{g_{23}}Y'')_{ii}.\end{equation}
   Since all differentials and the potentials are \(\frb\)-linear by taking the second
   derivatives of the potentials along \(\frb\) we prove that operators
   \begin{equation}\label{eq:def-u}
     u_i=\partial_{Y'_{ii}}D'\oti\partial_{X''_{ii}}D'',\end{equation}
   satisfy relations
   \begin{equation}\label{eq:rel-u}
     u_i^2=0,\quad u_iu_j+u_ju_i=0.
   \end{equation}
   Hence we can define the homotopy map by
   \[U=\exp(-\sum_{i=1}^n u_i).\]
   Combining all previous observations and taking into account
   \(X'=X\), \(Y''=\sigma^{-1} \cdot Y\) for \eqref{eq:derW} we arrive to
   \[U(D'\otimes I+I\oti D'') U^{-1}=\tilde{D'}\oti I+I\oti\tilde{D''},\]
   where \(\tilde{D'}\) and \(\tilde{D''}\) are the differentials of the pull-backs
   \(  \tilde{\pi}_{12,y,\rho}^*(\calF)\),\(\tilde{\pi}_{23,,\rho}^*(\calG)\).
\end{proof}

\begin{proof}[Proof of theorem~\ref{thm:lin}] In the proof we omit the subindex from \(S_n\) from the
  notations of the maps to simplify notations.
  Let \(\omega=\sigma^{\eps_1}(i_1)\dots \sigma^{\eps_l}(i_l)\) is a presentation of a braid
  as product of the elementary braids. We claim that there is a homotopy which implies the
   statement of theorem:
  \[\und{\calcr}_{\eps_1}^{(i_1)}\sttx\dots\sttx \und{\calcr}_{\eps_l}^{(i_l)}\sim
    \und{\calcr}_{\eps_1}^{(i_1)}\stty\dots\stty \und{\calcr}_{\eps_l}^{(i_l)}.\]

Indeed, we can iterate formula \eqref{eq:ind-conv} to obtain a formula for the
long product:
\begin{multline}\label{eq:long-prod}   \calG \stty\ind_{i_1,i_1+1}(\calF_1)\stty\dots \stty\ind_{i_l,i_l+1}(\calF_l)
  =\\
  \tilde{\pi}_{1l+2,y,\bullet*}(\CE_{\frn^l_{2}}(\tilde{\pi}_{12,y,\bullet}^*(\calG)\oti_{B_{2}}\tilde{\pi'}_{23,y,\bullet}^*(\calF_1)\oti_{B_{2}}\dots\oti_{B_2}\tilde{\pi'}^*_{l+1,l+2,y,\bullet}(\calF_l))),
\end{multline}
where \(\tilde{\pi}_{1,l+2,y,\bullet}\) is the map from \[\und{\scXr_{l+2}}(G, G_{i_1,i_1+1},\dots,G_{i_l,i_l+1})=
  \frb\ti G_{i_1,i_1+1}\ti\dots\ti G_{i_l,i_l+1}\ti \frb\]
to \(\und{\scXr_2}(G_n)\) given by
\[\bar{\pi}_{1,l+2,y,\rho}(X,g_{12},\dots,g_{l+1,l+2},Y)=(X,g_{12}\dots g_{l+1,l+2},Y)\]
and the maps \(\tilde{\pi'}_{k,k+1,y,\bullet}\) are the corresponding twisted linear projections
on \(\und{\scXr_2}(G_2)\).

The maps \(\tilde{\pi'}_{k,k+1,y,\bullet}\) are obtained by a composition
of the restriction of the map \(\tilde{\pi}_{k,k+1,y,\bullet}\) and a
natural projection from \(\und{\scXr_2}(P_{i_k})\) to
\(\und{\scXr_2}(G_2)\). Thus we can apply lemma~\ref{lem:lin} and if we set \(\calF_i=\und{\calcr}_{\eps_i}\) and
set \(\calG=\und{\calcr}_{\parallel}\), we obtain the desired homotopy.
\end{proof}
\section{Link invariant}
\label{sec:link-invariant}

\subsection{Conjugacy class invariant}
\label{sec:conj-class-invar}

First we define conjugacy invariant of the braid. For  that  we observe
the  subvariety \(\frb\ti 1\ti\frb\subset \und{\scXr_2}\) is invariant with respect to the adjoint \(B\)-action.
Thus the corresponding embedding \(j_e:\frb^2\to \und{ \scXr_2}\) induces the functor:
\[j^*_e:\uoMF_{\tau,\sigma}\to \MF_B(\frb^2,\bar{Q}_{\tau,\sigma}), \quad \bar{Q}_{\tau,\sigma}(X,Y)=\Tr(X(\tau\cdot Y-\sigma\cdot Y)).\]

The  potential \(\bar{Q}_{\tau,\sigma}\) is a quadratic function of the diagonal elements, thus there is  a Knorrer periodicity
functor. Let us denote by \(\frb^\alpha\), \(\alpha\in S_n\) fixed by \(\alpha\).
 Then the Knorrer periodicity functor provides us with a functor:
\[\overline{\KN}_{\tau,\sigma}: \MF_B(\frb^2,\bar{Q}_{\tau,\sigma})\to D^{per}_B(\frb^\delta\times \frb^{\delta}),\]
where \(\delta=\tau^{-1}\sigma\).

The  composition of the above functors with \(\CE_{\frn}(\bullet)^T\) and push-forward along
the projection \(\frb^\delta\ti\frb^\delta\to \frh^\delta\ti\frh^\delta\) results into the categorical trace:
\[\mathcal{T}r_0: \uoMF_{\tau,\sigma}\to D^{per}(\frb^\delta\times \frb^{\delta}).\]

There is a natural action of the symmetric group, which changes the labels. This action is the groupoid  homomorphism:
\[\tau\cdot ({}_\alpha\sigma_\beta)=({}_{\alpha\cdot\tau}\sigma_{\beta\cdot\tau}).\]
This automorphism is sent by \(\overline{\und{\Phi^{br}}}\) to the functor induced by the \(\tau\cdot\) action on \(\frb\):
\[\tau: \uoMF_{\alpha,\beta}\to \uoMF_{\alpha\cdot\tau,\beta\cdot\tau}.\]

Now we are ready to state the main property of our trace functor:
\begin{proposition}\label{prop:conj}
  For any \(\calF\in \uoMF_{\alpha,\beta}\), \(\calG\in\uoMF_{\beta,\gamma}\) we have:
  \[\mathcal{T}r_0(\calF\stry\calG)=\mathcal{T}r_0(\tau\cdot \calG\stry\calF),\]
  where \(\tau=\alpha\gamma^{-1}\).
\end{proposition}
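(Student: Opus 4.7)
The plan is to prove the cyclicity relation by base change along the diagonal $j_e\colon \frb^2 \hookrightarrow \und{\scXr_2}$, followed by an explicit involution of the restricted convolution space that interchanges the roles of $\calF$ and $\tau\cdot\calG$. The argument is a cousin of the Fourier symmetry proof of Section~\ref{sec:fourier-morphisms}, but with the global swap of $X$ and $Y$ replaced by the inversion $g \mapsto g^{-1}$ coming from the trace condition.

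First I would unfold $\mathcal{T}r_0(\calF \stry \calG)$ using the definition \eqref{eq:shrt-con}. Since $\bar\pi_{13,y,\gamma}\colon (X,g_{12},g_{23},Y) \mapsto (X, g_{12}g_{23}, Y)$, the base change of $j_e$ along $\bar\pi_{13,y,\gamma,*}$ is pushforward from the fiber $\{g_{12}g_{23}=1\} \subset \und{\scXr_3}$, which is canonically identified with $\frb \times G \times \frb$ via $(X,g,Y) \mapsto (X,g,g^{-1},Y)$. This lets me rewrite $\mathcal{T}r_0(\calF \stry \calG)$ as a Knörrer–Chevalley–Eilenberg–pushforward chain to $\frh^\delta \times \frh^\delta$ applied to the restrictions of $\bar\pi_{12,y,\gamma}^*(\calF) \otimes_B \bar\pi_{23,y,\gamma}^*(\calG)$ to this fiber. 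The same procedure applied to the right-hand side produces an analogous expression built from $\tau \cdot \calG$ and $\calF$ on the same underlying space.

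Second, I would construct an involution $\phi\colon \frb \times G \times \frb \to \frb \times G \times \frb$ that sends $g \mapsto g^{-1}$ and swaps the two $\frb$-factors (in the spirit of the Fourier automorphism of Section~\ref{sec:fourier-morphisms}), and check that it carries the first tensor product configuration to the second. The $\tau$-twist applied to $\calG$, realized through the $S_n$-action on the last factor of $\und{\scXr_2}$, is precisely what converts the boundary labels of the cyclically shifted $\calG$ into the labels it must carry when placed at the left of $\calF$; a direct computation of how $\phi$ transforms the restricted potentials pins down $\tau = \alpha\gamma^{-1}$. With this verified, I would check compatibility of $\phi$ with $\overline{\KN}$ (automatic, since $\phi$ preserves the quadratic form on the diagonal entries), with $\CE_{\frn}(\bullet)^T$ (because $\phi$ is equivariant for a matching pair of $B$-actions), and with the final pushforward to $\frh^\delta \times \frh^\delta$ (where $\phi$ descends to the swap of the two $\frh^\delta$-factors, which is an isomorphism).

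The main obstacle is the bookkeeping of equivariant structures and labels: one must show that the $\tau$-twist of $\calG$ is exactly the correction making the potentials on the two sides agree after $\phi$, and that the $B$-equivariant tensor product construction of Section~\ref{sec:short-convolutions} is preserved under $\phi$ together with the identification of fibers. Once this matching is pinned down, the rest reduces to standard base change and a near-verbatim repetition of the homotopy construction of Lemma~\ref{lem:lin} with $g \leftrightarrow g^{-1}$ playing the role of the Fourier swap.
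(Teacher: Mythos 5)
Your overall route coincides with the paper's: both arguments restrict the convolution to the closure fiber $\{g_{12}g_{23}=1\}$ by base change against $j_e$ (this is the content of the diagram \eqref{eq:tr-dia}, with $\calZ=\frg\ti G\ti\frb\ti_\frh\frb$ playing the role of your fiber), and then exhibit an explicit involution of that fiber which inverts $g$. However, the involution you propose is the wrong one. In the paper's coordinates the map exchanging the $\calF$-slot with the $\calG$-slot is $\mathrm{sw}_z(X,g,Y_1,Y_2)=(\Ad_g^{-1}X,\,g^{-1},\,Y_2,\,Y_1)$: it conjugates $X$ and swaps the two $Y$-type boundary variables, but it does \emph{not} exchange the $X$-factor with the $Y$-factor. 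A Fourier-type swap of the two $\frb$-factors of $\frb\ti G\ti\frb$, i.e.\ $(X,g,Y)\mapsto(Y,g^{-1},X)$, feeds the $X$-slot of one matrix factorization into the $Y$-slot of the other; combined with reversal of the braid this is what proves Theorem~\ref{thm:FTd}, not the cyclicity $\mathcal{T}r_0(\calF\stry\calG)=\mathcal{T}r_0(\tau\cdot\calG\stry\calF)$. If you carry out the potential computation you advertise, you will find the restricted potentials do not match under your $\phi$; the correct short-category substitute keeps $X$ on the $X$-side (suitably conjugated and truncated) and $Y$ on the $Y$-side.

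The second gap is the claim that compatibility with the final pushforward to $\frh^\delta\ti\frh^\delta$ is automatic. It is not: in the big (and short) category only one copy of $\CC[\frh^\delta]$ acts on the nose, and the second action is implemented by tensoring with the auxiliary Koszul complex $\mathrm{K}^\delta$ whose differential is $D^\delta=\sum_{o}(x_o-\sum_{i\in o}X_{ii})\partial/\partial\theta_o$. The involution does not preserve this complex --- it replaces $X_{ii}$ by $(\Ad_g^{-1}X)_{ii}$ --- so one must construct an explicit intertwiner $M=\exp\bigl(\sum_o(u_o'\oti 1+1\oti u_o'')\theta\bigr)$ built from the partial derivatives $\partial D'/\partial Y_{ii}$, exactly in the style of Lemma~\ref{lem:lin}; this occupies the second half of the paper's proof. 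Note also that the involution descends to the \emph{identity} on the base $\frh^\delta$, not to a swap of the two factors as you assert. Without these two corrections the argument only yields the equality at the level of one of the two $\CC[\frh^\delta]$-module structures, and for the wrong pairing of $\calF$ and $\calG$.
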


It is easier to prove the trace property if we use the convolution in big category. So define a version of the trace functor that works for the convolution
in big category, to be more precise we will work
with the category \(\uMF^\circ_{\sigma,\tau}\). One of the differences between the small and big categories is that in the small category every
object is a module over the ring of function on \(\frh^\delta\ti\frh^{\delta}\) for an appropriate \(\delta\) in the big category
only the structure of \(\CC[\frh^\delta]\)-module is obvious, the action of the other copy of \(\CC[\frh^\delta]\) is hidden.
In construction below we make this action more explicit.

First we define \(\und{\scX_2}^\circ(\delta)=\frg\ti\frb\ti_{\frh}\frb\ti \frh^\delta\) with the \(B\)-adjoint action on all factors.
Similarly to the previous case we define the \(B\)-equivariant map:
\[j_e:\und{\scX_2}^\circ(\delta)\to\und{\scX_2}^\circ, \quad j_e(X,Y_1,Y_2,h)=(X,Y_1,1,Y_2).\]

The  pull-back morphism along \(j_e\) gives us a functor:
\[j_e^*: \uMF^{\circ}_{\tau,\sigma}\to \MF_B(\und{\scX_2}^\circ(\delta),Q_{\tau,\sigma}), \quad Q_{\tau,\sigma}(X,Y_1,Y_2,h)=\Tr(X(\tau\cdot Y_1-\sigma\cdot Y_2)).\]
The potential \(Q_{\tau,\sigma}\) has a quadratic term \(\Tr(X_{--}(Y_{1--}-Y_{2--}))\) thus there is Knorrer periodicity functor:
\[\Phi: \MF_B(\und{\scX_2}^\circ(\delta), Q_{\tau,\sigma})\to \MF_B(\frb\ti\frb\ti\frh^\delta,\bar{Q}_{\tau,\sigma}). \]
The Knorrer functors are intwiners: \(\Phi\circ j_e^*=j_e^*\circ \Phi\). Also since the potential
\(\bar{Q}_{\tau,\sigma}\) is independent of the last factor in \(\frb\ti\frb\ti \ti \frh^\delta\)
we can extend  previously used functor to obtain:
\[\overline{\KN}_{\tau,\sigma}: \MF_B(\frb^2\ti \frh^\delta,\bar{Q}_{\tau,\sigma})\to
  D^{per}_B(\frb^\delta\times \frb^{\delta}\ti \frh^\delta).\]

We denote by \(O(\delta)\) the set of \(\delta\)-orbits  inside \(\{1,\dots,n\}\). There is a natural projection
\(\pi_\delta:\frh\to \frh^\delta\), if \(x_o\), \(o\in O(\delta)\) are coordinates on \(\frh^\delta\) then the projection
is given by \(x_o=\sum_{i\in o} h_i\).
We use same notation for the projection \(\pi_\delta:\frb\to \frh^\delta\) as well
as projection \(\frb^\delta\ti\frb^\delta\ti\frh^\delta \to \frh^\delta\ti \frh^\delta\) which is an identity on the last
factor and \(\pi_\delta\) on the second factor.

Let us fix coordinates \((X,Y_1,Y_2,x)\) on the space  \(\und{\scX_2}^\circ(\delta)\) and introduce Koszul complex
on the space
\[\mathrm{K}^\delta=(R\oti \Lambda^\bullet\langle \theta_1,\dots,\theta_l\rangle,D^\delta),\quad
  D^\delta=\sum_{o\in O(\delta)}(x_o-\sum_{i\in o} X_{ii})\frac{\partial}{\partial \theta_o},\]
where \(l=|O(\delta)|\) and \(R=\CC[ \und{\scX_2}^\circ(\delta)]\).
The  complex \(\mathrm{K}^\delta\) is not \(B\)-equivariant in strong sense but given \(\calC=(M,D,\partial)\in\MF_B(\und{\scX_2}(\delta),Q_{\tau,\sigma})\) by lemma 3.6 of \cite{OblomkovRozansky16}
there is \(\partial': M\oti\Lambda^\bullet\langle \theta_*\rangle\to
M\oti\Lambda^{<\bullet}\langle \theta_*\rangle\) such that
\[(M\oti \Lambda^\bullet \langle \theta_*\rangle, D+D^\delta,\partial+\partial')\in \MF_B(\und{\scX_2}(\delta),Q_{\tau,\sigma}).\]
This matrix factorization is unique up to homotopy (by Lemma 3.7 in \cite{OblomkovRozansky16}) and we use notation
\(\calC\oti_B \mathrm{K}^\delta\) for it.

The trace map is defined by
\[\mathcal{T}r_0:\uMF_{\sigma,\tau}\to D^{per}_B(\frh^\delta\ti\frh^\delta), \quad \mathcal{T}r_0(\calC)=
\pi_{\delta*}\left(\CE_{\frn}(\KN_{\tau,\sigma}\left(\Phi\left( j_e^*(\calC)\otimes_B \mathrm{K}^\delta\right)\right))^T\right).\]
\begin{proof}[Proof of proposition~\ref{prop:conj} ]
  From the construction we see that \(\mathcal{T}r_0(\Phi(\calC))=\mathcal{T}r_0(\calC)\).
  Thus we concentrate on the proof the statement for the trace in the big category.

  If we only care about the structure of \(\CC[\frh^\delta]\)-module (not \(\CC[\frh^\delta\ti \frh^\delta]\)) then the trace property follows from base changes in the commuting diagram:
  \begin{equation}\label{eq:tr-dia}
    \begin{tikzcd}
    \und{\scX_2}^\circ\ti\und{\scX_2}^\circ&\und{\scX_3}^\circ\arrow[l,"j_\Delta"]\arrow[d,"\pi_{13}^\circ"]&
    &\calZ\arrow[ll,dashed,"j"]\arrow[d,dashed,"\pi"]&\\
    &\und{\scX_2}^\circ  &\frg\ti\frb\ti\frb\arrow[l,"j_e"]&\frg\ti \frb^\delta\arrow[l,"j_{KN}"]
    \arrow[r,"\tilde{\pi}_\delta"]&\frh^\delta\ti\frh^\delta
  \end{tikzcd},
\end{equation}
here \(\calZ=\frg\ti G\ti \frb\ti_\frh\frb\) and the new solid arrow maps are:
\[j_\Delta(X,Y_1,g_{12},Y_2,g_{23},Y_3)=(X,Y_1,g_{12}, Y_2)\ti (\Ad_{g_{12}}^{-1}X,Y_2,g_{23},Y_3),\quad
  j_{KN}(X,Y)=(X,Y,Y),\]
\[j_e(X,Y_1,Y_2)=(X,Y_1,1,Y_2),\]
and \(\tilde{\pi}_\delta\) is the natural projection. The dashed arrows are constructed to make the
diagram commute:
\[j(X,g,Y_1,Y_2)=(X,Y_1,g,Y_2,g^{-1},Y_1),\quad \pi(X,g,Y_1,Y_2)=(X,\pi_\delta(Y_1)).\]

If we travel along the solid arrow from the upper-left corner to the lower-right corner we obtain
a simplified version of our trace functor:
\[\CE_{\frn^2}(\tilde{\pi}_{\delta*}\circ j_{KN}^*\circ j_e^*\circ \pi_{13*}^\circ\circ j_\Delta^*(\calF\boxtimes \calG))^{T^2 }= \pi_{\delta*}\left(\CE_{\frn}(\KN_{\tau,\sigma}\left(\Phi\left( j_e^*(\calF\star\calG)\right)\right))^T\right).  \]
On the other hand the base change tell us that
\[ j_{KN}^*\circ j_e^*\circ \pi_{13*}^\circ=\pi_*\circ j^*.\]
The maps \(j_\Delta\circ j\) and \(\tilde{\pi}_\delta\circ\pi\) are equivariant with respect to the involution
\[\mathrm{sw}_z(X,g,Y_1,Y_2)=(\Ad_g^{-1}X,g^{-1},Y_2,Y_1),\]
\[j_\Delta\circ j\circ\mathrm{sw}_z=\mathrm{sw}_x\circ j_\Delta\circ j,
\quad \tilde{\pi}_\delta\circ\pi\circ\mathrm{sw}_z=\tilde{\pi}_\delta\circ\pi,\] where
\(\mathrm{sw}_x\) swaps two copies of \(\und{\scX_2}^\circ\).
Thus the involution \(\mathrm{sw}_z\) induces the isomorphism of complexes of \(\CC[\frb^\delta]\)-modules:
\[\pi_{\delta*}\left(\CE_{\frn}(\KN_{\tau,\sigma}\left(\Phi\left( j_e^*(\calF\star\calG)\right)\right))^T\right)\simeq \pi_{\delta*}\left(\CE_{\frn}(\KN_{\tau,\sigma}\left(\Phi\left( j_e^*(\calG\star\calF)\right)\right))^T\right).\]
To upgrade this isomorphism to the statement about the trace \(\mathcal{T}r_0\) we need to
multiply the varieties of the diagram \eqref{eq:tr-dia} by \(\frh^\delta\) and observe that
\[\mathcal{T}r_0(\calF\star\calG)=\CE_{\frn^2}(\tilde{\pi}_{\delta*}\circ \pi_{*}( j^*\circ j_\Delta^*(\calF\boxtimes \calG)\oti_B\mathrm{K}^\delta))^{T^2 }
  .\]
The only difficulty with applying the previous argument in this case is that the involution
\(\mathrm{sw}_z\) acts non-trivially on the complex \(\mathrm{K}^\delta\). The pull-back of
the complex \(\mathrm{sw}_z^*(\mathrm{K}^\delta)\) has differential:
\[\tilde{D}^{\delta}=\sum_{o\in O(\delta)}(x_o-\sum_{i\in o} \Ad^{-1}_g(X)_{ii})\frac{\partial}{\partial \theta_o}. \]
However, there is a homotopy connecting \(D^\delta\) and \(\tilde{D}^\delta\). If
\(\calF=(M',D',\partial')\) and \(\calG=(M'',D'',\partial'')\) then the partial derivatives
\[u'_o=\sum_{i\in o}\frac{\partial D'}{\partial Y_{ii}}, \quad u''_o=\sum_{i\in o}\frac{\partial D''}{\partial Y_{ii}},\]
provide homotopies between \(\sum_{i\in o}X_{ii}\) and \(\sum_{i\in o}(\Ad_gX)_{ii}\) for \(o\in O(\delta)\).
Thus the matrix \(M=\exp(\sum_{o\in O(\delta)}(u'_o\oti 1+ 1\oti u''_o)\theta)\) 
provides us with the intertwiner:
\[M\circ D^\delta\circ M^{-1}=\tilde{D}^\delta,\]
and the proposition follows.
\end{proof}

\subsection{Framed category and link invariants}
\label{sec:framed-category-link}

In this section we introduce a version of our short category with stability condition. This category has enough structure for
defining the link invariant.

The  framed version of the space \(\und{\scXr_2}\) is a open subset of \(\und{\scXr_2}\ti V\), \(V=\cc^n\) defined by the
stability condition:
\[(X,g,Y,v)\in \und{\scXr_{2,fr}}, \mbox{ iff } \cc\langle X,(\Ad_gY)_+\rangle v=V.\]

The  projection along the vector space \(V\) maps \(\und{\scXr_{2,fr}}\) to the open subset \(\und{\scXr_{2,st}}\subset \und{\scXr_2}\).
The  pull-back along this projection induces the functor:
\[\forg: \uoMF_{\sigma,\tau}\to \MF_{B^2}(\und{\calXr_{2,fr}},\Wr_{\sigma,\tau})=\uoMF_{\sigma,\tau}^{st}.\]

Similarly, we define subspace \(\und{\scXr_{3,fr}}\) as stable locus of the product \(\und{\scXr_3}\ti V\).
There is a natural extension of the maps \(\bar{\pi}_{ij,\rho}\) to the maps between the spaces
\(\und{\scXr_{3,fr}}\) and \(\und{\scXr_{2,fr}}\). Thus we can define the monoidal structure on the category
\(\uoMF_{\bullet,\bullet}^{st}\). Moreover,
the argument of  lemma 12.2 \cite{OblomkovRozansky16} implies that the functor \(\forg\) is monoidal.
\[\forg(\calF\stry\calG)=\forg(\calF)\stry\forg(\calG).\]

The  geometric trace functor from  category \(\uoMF_{\bullet,\bullet}^{st}\) lands into the category constructed on
the space \(\und{\widetilde{\Hilb}}^{free}_{1,n}\) which is the space of triples \((X,Y,v)\in\frb^2\ti V\) satisfying stability
condition:
\[\cc\langle X,Y\rangle v=V.\]
We denote by \(\und{\Hilb}_{1,n}^{free}\) the \(B\)-quotient of \(\und{\widetilde{\Hilb}}_{1,n}^{free}\). We also use notation
\(j_e\) for embedding of \(\und{\widetilde{\Hilb}}^{free}_{1,n}\) inside \(\und{\scX_{2,fr}}\). The  pull-back along this
map give us functor:
\[j_e^*: \uoMF_{\sigma,\tau}^{st}\to \MF_B(\und{\Hilb}_{1,n}^{free},Q_{\sigma,\tau}).\]

There is a natural projection \(\chi:\und{\widetilde{\Hilb}}_{1,n}\to \frh\ti\frh\) and we define
\(\und{\widetilde{\Hilb}}_{1,n}(\calZ):=\chi^{-1}(\calZ)\), \(\calZ\subset \frh\ti\frh\).
We also use abreviation \(\und{\widetilde{\Hilb}}_{1,n}(\delta)\), \(\delta\in S_n\)
for \(\und{\widetilde{\Hilb}}_{1,n}(\frh^\delta\ti\frh^\delta)\).
The Knorrer periodicity thus gives us functor:
\[\KN_{\sigma,\tau}: \MF_B(\und{\Hilb}_{1,n}^{free},Q_{\sigma,\tau})\to D^{per}_B(\und{\widetilde{\Hilb}}_{1,n}(\sigma\tau^{-1})).\]

Combining all the previous constructions we assign a two-periodic complex to a labeled braid \(\beta\in\LBr_n\):
\[\und{\mathbb{S}}_\beta=\CE_{\frn}(\KN_{\sigma,\tau}(\forg(\und{\Phi^{br}}(\beta))))^T,\]
where \(\sigma=s(\beta),\tau=t(\beta)\).
The same argument as above implies that this complex only depends on the conjugacy class of the element of the braid group.

The last component needed to define the knot homology is the  tautological vector bundle \(\calB\) over \(\und{\widetilde{\Hilb}}_{1,n}(\delta)\) with fiber \(V^\vee\).
We use same notation for vector bundle on the quotient \(\und{\Hilb}_{1,n}(\delta)\). The hyper-cohomology
\[\mathbb{H}^k(\beta)=\mathbb{H}(\und{\mathbb{S}}_\beta\oti \Lambda^k\calB)\]
is a doubly-graded module over the doubly-graded ring \(R(\beta)=\cc[X_{11},Y_{11},\dots,X_{nn},Y_{nn}]/I_{\sigma,\tau}\) where
\(I_{\sigma,\tau}\) is generated by the elements \(X_{ii}-X_{\delta(i),\delta(i)}, Y_{ii}-Y_{\delta(i),\delta(i)}\), \(\delta=\sigma\tau^{-1}\), \(i=1,\dots,n\).

Denote \(\ell(\beta)\) the number of connected components of the closure of \(\beta\), which is also equal to the number of orbits of \(\delta\) acting on \(\{1,\ldots,n\}\).
A choice of correspondence between the orbits and elements of \(\{1,\dots,\ell\}\),
identifies \(R(\beta)\) with \(\cc[x_1,y_1,\dots,x_\ell,y_\ell]\). Let us also recall that there is a natural Fourier
transform on \(\mathfrak{F}\) on the ring \(R(\beta)\) given by formula \eqref{eq:Four-ring}.

As in \cite{OblomkovRozansky16} we use notation \(\mathrm{q}^k\mathrm{t}^m\) for the shift of \(qt\)-grading. With this convention in mind
the main result of the paper is the following:
\begin{theorem}\label{thm:main2}
  For any \(\beta\in\LBr_n\) the triply-graded module over \(R(\beta\):
  \[\mathrm{HXY}(\beta)=\oplus_{k\in \zz}\chh^k(\beta),\]
  with \(\chh^k(\beta)\) defined by
  \[\chh^k(\beta)=\mathrm{q}^{\mathrm{wr}(\beta)+n}\cdot \mathbb{H}(\beta),\]
  is an isotopy invariant of the closure \(L(\beta)\), here \(\mathrm{wr}(\beta)\) is the wreath of the closure
  of \(\beta\).

  Moreover, there is an  involution \(\mathfrak{F}\) on \(\HY(\beta)\) that intertwines the
  the Fourier transform on \(R(\beta)\).
\end{theorem}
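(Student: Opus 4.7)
The proof has two main components: isotopy invariance of \(\HY(\beta)\) under the Markov moves, and construction of the Fourier involution \(\mathfrak{F}\).

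For isotopy invariance, the plan is to upgrade the trace formalism to the framed/stable setting. The geometric trace functor entering the definition of \(\und{\mathbb{S}}_\beta\) is the framed analog of \(\mathcal{T}r_0\): first pull back along \(j_e\) into \(\MF_B(\und{\Hilb}_{1,n}^{free},Q_{\sigma,\tau})\), then apply Knörrer periodicity \(\KN_{\sigma,\tau}\) and Chevalley-Eilenberg. I would first check that Proposition~\ref{prop:conj} transports verbatim to the framed setting: the diagram \eqref{eq:tr-dia} and the involution \(\mathrm{sw}_z\) all respect the stability condition \(\cc\langle X,(\Ad_g Y)_+\rangle v = V\), because that condition is \(B\)-equivariant and invariant under swapping \(g \leftrightarrow g^{-1}\) and the two \(\frb\)-factors. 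This yields Markov~I (conjugacy) invariance: for \(\beta=\alpha\gamma\alpha^{-1}\), applying the trace property iteratively to the product \(\und{\calcr}_\alpha\stry\und{\calcr}_\gamma\stry\und{\calcr}_{\alpha^{-1}}\) and to the cyclic rotations absorbs the label twists into the identity \(\delta=\sigma\tau^{-1}\), which depends only on the conjugacy class of \(\dot{\beta}\).

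For Markov~II (stabilization), I would mimic the corresponding step in \cite{OblomkovRozansky16}. The claim is that the trace of \(\und{\calcr}_\beta\stry\und{\calcr}_\pm^{(n)}\) in the \((n+1)\)-strand category agrees, up to the shift \(\mathrm{q}^{\mathrm{wr}+n}\), with the trace of \(\und{\calcr}_\beta\) in the \(n\)-strand category. The tautological bundle \(\calB\) restricts predictably along the embedding of the \(n\)-strand framed Hilbert-type scheme into the \((n+1)\)-strand one, so after pushing forward through \(\pi_\delta\), the extra strand contributes only the known one-crossing trace (a Koszul complex generated by a single linear equation on the \((n{+}1)\)-st coordinates). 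Checking that the writhe shift exactly cancels the \(q,t,a\)-weight contributed by this extra Koszul factor and by the change \(n\rightsquigarrow n+1\) is the main computational task; this step is expected to be the most delicate and is where the normalization \(\mathrm{q}^{\mathrm{wr}(\beta)+n}\) is forced upon us.

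For the Fourier involution, I would invoke Theorem~\ref{thm:lin} together with Theorem~\ref{thm:obj}. Presenting \(\beta=\sigma^{\epsilon_1}(i_1)\cdots\sigma^{\epsilon_l}(i_l)\) and iterating \eqref{eq:Four} gives a homotopy \(\FT(\und{\calcr}_\beta)\sim\und{\calcr}_\beta\) in the appropriate component of \(\uoMF_{\bullet,\bullet}\); the reversal symmetry \eqref{eq:reverse} explains why the order-reversal in Theorem~\ref{thm:lin} is not an obstruction after closing the braid. The involution \(\mathfrak{F}\) on \(\HY(\beta)\) is then induced by the geometric automorphism \((X,g,Y)\mapsto(Y,g^{-1},X)\) of \(\und{\scXr_{2,fr}}\) (which lifts to \(\und{\widetilde{\Hilb}}_{1,n}^{free}\) since the stability condition is symmetric in \(X\) and \(\Ad_g Y\) up to reflection), pushed through \(\KN_{\sigma,\tau}\), \(\CE_{\frn}^T\), and the hypercohomology with coefficients in \(\Lambda^\bullet\calB\). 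Compatibility with \(\mathfrak{F}(x_i)=y_i\), \(\mathfrak{F}(y_i)=-x_i\), \(\mathfrak{F}(q)=tq^{-1}\) on \(R(\beta)\) is then a direct consequence of the \(\Tqt\)-weights assigned to the diagonal entries \(X_{ii}, Y_{ii}\) of \(\frb\) and the sign produced by \(g\mapsto g^{-1}\).

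The main obstacle is the stabilization step: tracking precisely how the framed trace, the Chevalley-Eilenberg complex, and the tautological bundle \(\calB\) behave under the embedding \(\Br_n\hookrightarrow\Br_{n+1}\) with an added crossing, and confirming that the resulting grading shift matches \(\mathrm{q}^{\mathrm{wr}(\beta)+n}\). A secondary but nontrivial check is the equivariance of all the homotopies constructed in Lemma~\ref{lem:lin} under the residual \(B\)-action after framing, needed for the Fourier involution to descend compatibly to \(\HY(\beta)\) as an \(R(\beta)\)-module rather than merely as a complex of vector spaces.
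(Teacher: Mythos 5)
Your overall strategy coincides with the paper's: Markov I via the trace property of Proposition~\ref{prop:conj} (transported to the framed setting), Markov II via the pushforward computation along the $\PP^{n-1}$-fibration $\pi:\Hilb^{free}_{1,n}\to\cc_x\ti\cc_y\ti\Hilb^{free}_{1,n-1}$ borrowed from \cite{OblomkovRozansky16}, and the Fourier symmetry via Theorem~\ref{thm:lin}. Two points in the Fourier part need repair. First, the appeal to Theorem~\ref{thm:obj} is circular: in the paper that theorem is itself one of the consequences of the chain (Theorem~\ref{thm:FTd} combined with Proposition~\ref{prop:I}) that proves the second half of Theorem~\ref{thm:main2}. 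Second, iterating Theorem~\ref{thm:lin} does \emph{not} produce a homotopy $\FT(\und{\calcr}_\beta)\sim\und{\calcr}_\beta$: since $\FT$ is an anti-homomorphism for $\stty$ and fixes the generators (because $\und{\calcr}_+$ is the Koszul factorization on the pair $\tilde{x}_0,\tilde{y}_0$, which $\FT$ swaps), what one actually obtains is $\FT(\und{\calcr}_\beta)\sim\und{\calcr}_{\overline{\beta}}$ and hence $\FT(\mathcal{T}r(\beta))=\mathcal{T}r(\overline{\beta})$. Returning to $\mathcal{T}r(\beta)$ is not achieved by ``closing the braid'' alone; it requires composing with the separate orientation-reversal anti-equivalence $\mathfrak{I}:\und{\MF}_{\sigma,\tau}\to\und{\MF}_{\tau,\sigma}$ of Proposition~\ref{prop:I}, and the involution asserted in the theorem is $\FTT=\FT\circ\mathfrak{I}$. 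This composition, not the closure, is the source of the sign in $\FTT(y_i)=-x_i$; your parenthetical reference to \eqref{eq:reverse} gestures at this but must be promoted to an actual step of the argument.

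On the stabilization step, your sketch points in the right direction but omits the two ingredients that make it work. The precise statement is the pair \eqref{eq:Mark2}: the positive stabilization preserves $\mathcal{T}r_k$ while the negative one sends $\mathcal{T}r_k$ to $\mathcal{T}r_{k-1}$, and this $a$-degree shift comes from $\pi_*(\calO_n(-n+1)\oti\Lambda^k\calB_n)=\Lambda^{k-1}\calB_{n-1}[n]$ together with the vanishing $\pi_*(\calO_n(m)\oti\Lambda^k\calB_n)=0$ for $m\in[-n+2,-1]$; your description of ``the known one-crossing trace'' does not capture this asymmetry between the two signs of the crossing. More importantly, one needs the structural claim \eqref{dia:big}: the restricted curved complex $i_\Delta^*(\und{\calcr}_{\beta\cdot\sigma(1)^{\pm1}})$ is assembled from copies of $\pi^*(\und{\calcr}_\beta\oti\mathrm{K}[x_{11}])\oti\Lambda^\bullet V$ with all backward (Koszul) arrows killed by $j_e^*$, so that contracting the $\pi_*$-acyclic middle terms creates no correction differentials and only the extreme term survives. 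Without this analysis the reduction of the $(n+1)$-strand trace to the $n$-strand trace is not justified, and it is exactly here---not in the writhe bookkeeping---that the paper's proof (following Theorem~13.3 of \cite{OblomkovRozansky16}) does its real work.
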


The  theorem follows  from more general statement. Using the projection \(\chi\)
we define:
\[\mathcal{T}r_k(\beta)=\chi_*(\und{\mathbb{S}}_\beta\oti\Lambda^k\calB)\in D^{per}(\frh^\delta\ti\frh^\delta), \quad
  \dot{\beta}=\delta.\]
Respectively, we denote by \(\mathcal{T}r\) the direct sum
\[\mathcal{T}r(\beta)=\oplus_k\mathcal{T}r_k(\beta).\]
Previously defined trace \(\mathcal{T}r_0\) could restricted to the stable part of the corresponding spaces to define:
\[\mathcal{T}r_0: \uoMF_{\tau,\sigma}^{st}\to D^{per}(\frh^\delta\ti\frh^\delta),\quad \delta=\tau^{-1}\sigma.\]
This definition is consistent with the other construction of the trace since
\begin{equation}\label{eq:tr-tr}
  \mathcal{T}r_0(\beta)=\mathcal{T}r_0(\und{\calC}_\beta).
\end{equation}

The  first part of the theorem~\ref{thm:main2} follows from the theorem bellow the second part is discussed in the next section.
\begin{theorem}
  For any \(\beta\in \Br_n\) the direct sum
  \[\mathcal{E}(\beta)=\mathrm{q}^{\mathrm{wr}(\beta)+n}\cdot \mathcal{T}r(\beta)\]
  is a link invariant of the closure \(L(\beta)\).
  \end{theorem}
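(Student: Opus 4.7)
The plan is to establish the two Markov moves for $\mathcal{E}(\beta)$: invariance under conjugation $\beta \mapsto \alpha\beta\alpha^{-1}$ (Markov I), and invariance under stabilization $\beta \mapsto \beta\sigma_n^{\pm 1}$ (Markov II). The writhe $\mathrm{wr}(\beta)$ and strand count $n$ change in a controlled way under these moves, and the normalization $\mathrm{q}^{\mathrm{wr}(\beta)+n}$ is designed to cancel the shifts coming from the trace computation.

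For Markov I, I would first upgrade Proposition~\ref{prop:conj} from the trace $\mathcal{T}r_0$ to the full trace $\mathcal{T}r = \bigoplus_k \mathcal{T}r_k$ in the stable/framed setting. The key observation is that the swap involution $\mathrm{sw}_z$ used in the proof of Proposition~\ref{prop:conj} descends to the stable locus $\und{\widetilde{\Hilb}}_{1,n}(\delta)$ and lifts naturally to the tautological bundle $\mathcal{B}$, since $\mathcal{B}$ is defined intrinsically in terms of the pair $(X,Y)$ and transforms equivariantly. Consequently the isomorphism
\[
\mathcal{T}r(\calF \star \calG) \cong \mathcal{T}r(\tau\cdot\calG \star \calF)
\]
propagates to each $\mathcal{T}r_k$ by tensoring with $\Lambda^k\mathcal{B}$ before taking $\chi_*$. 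Applied to the images $\und{\Phi^{br}}$ of the braid generators and using equation~\eqref{eq:tr-tr}, this yields conjugation invariance of $\mathcal{T}r(\beta)$. Since $\mathrm{wr}$ and $n$ are preserved by conjugation, Markov I follows.

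For Markov II, I need to show $\mathcal{E}(\beta\sigma_n^{\pm 1}) \cong \mathcal{E}(\beta)$ for $\beta \in \Br_n \hookrightarrow \Br_{n+1}$, noting that $\mathrm{wr}$ shifts by $\pm 1$ and the strand count by $+1$. Using the induction functor formalism of Section~\ref{sec:induction-functors} and the iterated convolution formula~\eqref{eq:long-prod}, I would decompose the trace of $\beta\sigma_n^{\pm 1}$ by splitting off the last pair of strands. Concretely, the matrix factorization $\und{\calcr}_{\pm}^{(n)}$ is obtained via $\overline{\ind}_{n-1,n+1}$ applied to $\und{\calcr}_\pm$ on two strands, so the convolution $\und{\calcr}_\beta \stry \und{\calcr}_\pm^{(n)}$ localizes the $g_{n,n+1}$-dependence to the last factor. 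Pushing forward along the $\chi$-projection in the last-strand variables first, I expect the Koszul-type structure of $\und{\calcr}_\pm$ (extending $\mathrm{K}[\tilde{x}]$ with appropriate twists) to produce precisely the grading shift that cancels against the change in $\mathrm{q}^{\mathrm{wr}+n}$. The tautological bundle $\mathcal{B}$ restricts nicely under this partial trace, reducing to $\mathcal{B}_n \oplus \mathcal{O}$ up to the shift, which accounts for the identity on the closed-off strand.

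The main obstacle will be the Markov II computation: verifying in detail that the partial trace along the last strand of $\und{\calcr}_\pm^{(n)}$ produces the identity object up to exactly the grading shift $\mathrm{q}^{\mp 1 - 1}$ that compensates $\mathrm{q}^{\mathrm{wr}+n} \to \mathrm{q}^{\mathrm{wr}\pm 1 + n + 1}$, and that this identification is compatible with the $\Lambda^k\mathcal{B}$ factors. This is a concrete matrix-factorization calculation closely parallel to the stabilization argument in~\cite{OblomkovRozansky16}, but the presence of the full $Y$-variables in the monodromic setting makes the explicit Koszul simplification more delicate; one must check that the extra generators annihilated by $\Wr_{\sigma,\tau}$ on the framed Hilbert space truncate cleanly after applying $\KN_{\sigma,\tau}$. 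Once Markov II is established, the second assertion of Theorem~\ref{thm:main2} (the Fourier involution on $\HY(\beta)$) is treated in the subsequent section and follows from Theorem~\ref{thm:lin} together with the equivariance of $j_e$, $\KN_{\sigma,\tau}$ and $\chi$ under the involution $(X,Y) \mapsto (Y,X)$ twisted by $\FTT$.
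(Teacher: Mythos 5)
Your overall strategy---reducing to the two Markov moves, handling conjugation invariance by upgrading Proposition~\ref{prop:conj} to the full trace $\mathcal{T}r=\oplus_k\mathcal{T}r_k$ via \eqref{eq:tr-tr}, and handling stabilization by a partial trace over the added strand parallel to \cite{OblomkovRozansky16}---is the same as the paper's, and the Markov I part is fine. The gap is in Markov II, which you flag as the main obstacle but then describe with an incorrect mechanism. The tautological bundle does \emph{not} restrict as $\calB_n\oplus\calO$ under the partial trace: the correct statement is that the forgetful map $\pi:\Hilb^{free}_{1,n}\to\cc_x\times\cc_y\times\Hilb^{free}_{1,n-1}$ is a $\PP^{n-1}$-fibration and $\calB_n$ is an \emph{extension} of the line bundle $\calO_n(-1)$ (restricting to $\calO_{\PP^{n-1}}(-1)$ on fibers) by $\pi^*(\calB_{n-1})$. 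This is not cosmetic: the whole computation rests on the vanishing of the cohomology of $\calO_n(m)\otimes\Lambda^k\calB_n$ along the fibers for the intermediate twists $m\in[-n+2,-1]$, which kills all but one term of the complex \eqref{dia:big} --- the leftmost (untwisted) term for a positive crossing, giving $\pi_*(\Lambda^k\calB_n)=\Lambda^k\calB_{n-1}$, and the rightmost term, twisted by $\calO_n(-n+1)$, for a negative crossing, giving $\pi_*(\calO_n(-n+1)\otimes\Lambda^k\calB_n)=\Lambda^{k-1}\calB_{n-1}[n]$. The drop $k\to k-1$ for negative stabilization, i.e.\ the second equation of \eqref{eq:Mark2}, is precisely what makes the normalization work in the $a$-direction, and it cannot be seen from a ``$\calB_n\oplus\calO$'' picture, which would give the same answer for both signs of the crossing.

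A second ingredient you need but do not supply: to conclude that contracting the $\pi_*$-acyclic terms of \eqref{dia:big} creates no new correction differentials, one must know that after applying $j_e^*$ the curved complex $i_\Delta^*(\und{\calcr}_{\beta\cdot\sigma(1)^{\pm1}})$ has only arrows going in one direction --- the Koszul (dotted, ``backward'') arrows in the off-diagonal group coordinates die under the pull-back. This structural statement about the matrix factorization of a stabilized braid is the technical heart of the corresponding argument in \cite{OblomkovRozansky16} and of its monodromic analogue here; without it the contraction step of your partial trace is not justified, since a zig-zag of surviving arrows through acyclic terms would otherwise produce correction terms spoiling the identification with $\und{\calcr}_\beta$. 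With these two points supplied, your outline becomes the paper's proof.
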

\begin{proof}
  We need to check that the trace satisfies the Markov move conditions. The first Markov move is about
  conjugacy invariance and it follows immediately from \eqref{eq:tr-tr} and \ref{prop:conj}.
The second Markov move is equivalent to the pair of equations
  \begin{equation}
    \label{eq:Mark2}
    \mathcal{T}r_k(\beta\cdot \sigma(1))=\mathcal{T}r_k(\beta),\quad \mathcal{T}r_k(\beta\cdot\sigma(1))=\mathcal{T}r_{k-1}(\beta),
  \end{equation}
  where \(\beta\in\Br_{n-1}\) is a braid on the strands \(2,\dots,n\).

  These equations are implied by the same argument as theorem 13.3 of \cite{OblomkovRozansky16}. Below we outline the
  steps of the proof. The reader may consult \cite{Oblomkov18a} for a detailed account.

  The  images of the braids \(\beta\) and \(\beta\cdot \sigma(1)\)  in the symmetric groups are \(\delta'\in S_{n-1}\) and
  \(\delta=\delta'\cdot t_{12}\in S_{n}\).
  Now we can use the nested nature of the scheme \(\Hilb_{1,n}\) to define the projection map:
\[\pi: \Hilb^{free}_{1,n}\to \cc_x\ti\cc_y\times \Hilb^{free}_{1,n-1},\]
where the first two components of the map \(\pi\) are \(x_{11},y_{11}\) and the last component is just forgetting of the first
row and the first column of the matrices \(X,Y\) and the first component of the vector \(v\). Let us also fix notation for the line bundles on
 \(\Hilb_n^{free}\): we denote by \(\calO_k(-1)\) the line bundle induced from the twisted trivial bundle \(\calO\otimes\chi_k\).
It is
shown in proposition 13.1 of \cite{OblomkovRozansky16} that  the fibers of the map \(\pi\) are projective spaces \(\mathbb{P}^{n-1}\) and
  \begin{equation}
  \calB_n/\pi^*(\calB_{n-1})=\calO_n(-1),
  \quad \calO_n(-1)|_{\pi^{-1}(z)}=\calO_{\mathbb{P}^{n-1}}(-1).
  \end{equation}

We can combine the last proposition with the observation that the total homology \(H^*(\mathbb{P}^{n-1},\calO(-l))\) vanish
if \(l\in (1,n-1)\) and is one-dimensional for \(l=0,n\) and obtain a conclusion of proposition 13.2 of \cite{OblomkovRozansky16}
  For any \(n\) we have:
  \begin{itemize}
  \item \(\pi_*(\Lambda^k\calB_n)=\Lambda^k\calB_{n-1}\)
  \item \(\pi_*(\calO_n(m)\oti \Lambda^k\calB_n)=0\) if \(m\in [-n+2,-1]\).
    \item \(\pi_*(\calO_n(-n+1)\oti \Lambda^k\calB_n)=\Lambda^{k-1}\calB_{n-1}[n]\)
  \end{itemize}



  The main technical component of the proof is a careful analysis of  matrix factorizations \(\bar{\calC}_{\beta\cdot \sigma(1)^{\pm 1}}\in\MF(\bar{\scX}_n^{st},\Wr)\), see proof of theorem 13.3 in \cite{OblomkovRozansky16}. The same argument is applicable for
  study of the curved complexes \(\und{\calcr}_{\beta\cdot \sigma(1)^{\pm 1}}\in\uoMF^{st}_{\tau,\sigma}\).
  To state the version of the result we need to work with the space \(\Hilb_{1,n}(\frh\ti \frh^{\sigma(1)})\),
  we denote the embedding map into \(\Hilb_{1,n}\) by \(i_\Delta\). The image \(\pi(\Hilb_{1,n}(\frh\ti \frh^{\sigma(1)}))\)
  is isomorphic to \(\cc_x\ti \Hilb_{1,n-1}\). Now the version of the result from \cite{OblomkovRozansky16} that we need
 show  that curved complex \(i^*_\Delta(\und{\calcr}_{\beta\cdot\sigma(1)^{\epsilon}})\) has form:
   \begin{equation}\label{dia:big}
     \begin{tikzcd}
              \calC'\ar[r,bend right]\ar[rrr,bend right]\ar[rrrrr,bend right]&
       \calC'\ot V\ar[l,dotted]\ar[r,bend right]\ar[rrr,bend right]&
       \calC'\ot\Lambda^2V\ar[l,dotted]\ar[r,bend right]\ar[rrr,bend right]&
       \calC'\ot\Lambda^3V\ar[l,dotted]\ar[r,bend right]&
      \calC'\ot\Lambda^4V\ar[l,dotted]\ar[r,bend right]&\cdots\ar[l,dotted]
  \end{tikzcd}
\end{equation}
  where \(\calC'=\pi^*(\und{\calcr}_{\beta}\oti \mathrm{K}[x_{11}])\), \(V=\CC^{n-2}\),
 the dotted arrows are the differentials of the Koszul complex for the ideal \(I=(g_{13},\dots,g_{1n})\) where \(g_{ij}\) are the coordinates on the
  group inside the product \(\und{\scXr}_n=\frb_n\ti \GL_n\ti \frb_n\). Thus after the pull-back \(j_e^*\) the dotted arrows of the curved complex
  vanish and we only left with the arrows going from the left to right.

  Since the push-forward along \(\pi\) intertwines the Knorrer functor:
  \[\CE_{\frn_n}(\KN_{\tau,\sigma}(\calc))=\CE_{\frn_{n-1}}(\KN_{\tau',\sigma'}(\pi_*\circ i^*_\Delta(\calc)))^{T_{n-1}}.\]
  to finish proof of the theorem we need to compute
   \(\pi_*(i_\Delta^*\circ j^*_e(\und{\calcr}_{\beta\cdot\sigma_1^{\pm 1}})\ot \Lambda^k\calB_n)\) and  here we can apply the previous corollary.
  Thus if \(\epsilon=1\) then only the left extreme term of \(j_e^*\) of the complex \eqref{dia:big} survive the push-forward \(\pi_*\). Since
  the non-trivial arrows of \(j_e^*\) of \eqref{dia:big} all  are the solid arrows which are  going the left to the right, the contraction of the \(\pi_*\)-acyclic terms
  do not lead to appearance of new correction arrows thus conclude that
  \[\pi_*(i^*_\Delta\circ j^*_e(\und{\calcr}_{\beta\cdot\sigma_1})\ot \Lambda^k\calB_n)=j_e^*(\und{\calcr}_\beta\ot \Lambda^k\calB_{n-1}).\]

  If \(\epsilon=-1\) then only the right extreme term of \(j_e^*\) of the complex \eqref{dia:big} survive the push-forward \(\pi_*\). Hence the
  similar argument as before implies:
    \[\pi_*(i^*_\Delta\circ j^*_e(\und{\calcr}_{\beta\cdot\sigma_1^{-1}})\ot \Lambda^k\calB_n)=j_e^*(\und{\calcr}_\beta\ot \Lambda^{k-1}\calB_{n-1}).\]

\end{proof}

\subsection{Fourier transform for the trace}
\label{sec:four-transf-trace}

Let us use notation for the anti-involution inverting the direction of the braid:
\[\overline{\alpha\cdot\beta}=\beta\cdot \alpha,\quad \overline{\sigma(i)}=\sigma(i)\]

Let us also define an involution on the ring \(R(\beta)\) by
\[\FT(x_i)=y_i,\quad \FT(y_i)=x_i.\]
\begin{theorem}\label{thm:FTd}
  For any \(\beta\in \LBr_n\) we  have
  \[\FT(\mathcal{T}r(\beta))=\mathcal{T}r(\overline{\beta})\]
\end{theorem}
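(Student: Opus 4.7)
The plan is to lift the statement to the category of matrix factorizations, show that \(\FT\) is an anti-involution with respect to \(\stty\), verify this on generators, and then transport the conclusion through the trace functor.

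First I would establish the lifted identity \(\FT(\und{\calcr}_\beta)\sim \und{\calcr}_{\overline{\beta}}\) on braid generators by direct computation. For the elementary positive crossing \(\und{\calcr}_+ = \mathrm{K}^{\Wr_{\tau,1}}(\tilde{x}_0)\), the defining element is \(\tilde{x}_0 = (x_{11}-x_{22})a_{11} + x_{12}a_{21}\) and its companion factor is \(\tilde{y}_0 = ((y_{11}-y_{22})a_{22} - y_{12}a_{21})/\det g\). Using the explicit entries \((g^{-1})_{11}=a_{22}/\det g\) and \((g^{-1})_{21}=-a_{21}/\det g\), one checks that the substitution \((X,g,Y)\mapsto(Y,g^{-1},X)\) exchanges \(\tilde{x}_0\leftrightarrow \tilde{y}_0\). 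Hence \(\FT\) sends the Koszul factorization to its dual factorization, and since \(\tilde{x}_0\cdot\tilde{y}_0=\Wr_{\tau,1}\) the uniqueness of Koszul extensions (Lemmas 3.1, 3.2 of \cite{OblomkovRozansky16}) gives \(\FT(\und{\calcr}_+)\sim \und{\calcr}_+\) up to the appropriate cohomological shift. The analogous statement for \(\und{\calcr}_-\) follows from the character-twist definition of the inverse, and compatibility with the induction functors of Section~\ref{sec:induction-functors} promotes this to \(\FT(\und{\calcr}_\pm^{(i)})\sim \und{\calcr}_\pm^{(i)}\) for all \(i\); this matches the braid identity \(\overline{\sigma(i)^{\eps}}=\sigma(i)^{\eps}\).

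Next I would extend to arbitrary braids using Theorem~\ref{thm:lin}. Applying \(\FT\) to both sides of that theorem and using \(\FT^2=\Id\) yields the anti-homomorphism property
\[
  \FT(\und{\calcr}_\alpha\stty \und{\calcr}_\beta)\sim \FT(\und{\calcr}_\beta)\stty \FT(\und{\calcr}_\alpha).
\]
Iterating this on a presentation \(\beta=\sigma^{\eps_1}(i_1)\cdots\sigma^{\eps_l}(i_l)\) and applying the generator case at each step gives
\[
  \FT(\und{\calcr}_\beta)\sim \und{\calcr}_{\sigma^{\eps_l}(i_l)}\stty\cdots\stty \und{\calcr}_{\sigma^{\eps_1}(i_1)} = \und{\calcr}_{\overline{\beta}}.
\]

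Finally, I would show that the trace construction intertwines the two Fourier involutions, i.e.\ \(\FT(\mathcal{T}r(\calF))=\mathcal{T}r(\FT(\calF))\) where the left-hand side uses the coordinate swap on \(\frh\times\frh\) implementing \(\FT(x_i)=y_i\). The involution \((X,g,Y)\mapsto(Y,g^{-1},X)\) restricts, under the embedding \(j_e\) into the framed space and the Knorrer reduction to \(\und{\widetilde{\Hilb}}_{1,n}\), to the \(B\)-equivariant swap \((X,Y,v)\mapsto(Y,X,v)\); this preserves the tautological bundle \(\calB\) (hence each \(\Lambda^k\calB\)) and covers the transposition of the two factors of \(\frh\times\frh\), which is exactly the ring Fourier involution. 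Combined with Step~2 this yields the theorem. The main obstacle will be the compatibility of \(\FT\) with the framed stability condition \(\cc\langle X,(\Ad_g Y)_+\rangle v = V\), which is not literally fixed by the involution: one must produce an equivariant homotopy identifying its Fourier image with the original locus, in the same spirit as the comparison between \(\sttx\) and \(\stty\) in the proof of Theorem~\ref{thm:lin}. The \(B^2\)-equivariant structures and character twists must then be tracked carefully through the Knorrer periodicity, but no new phenomenon should arise beyond the techniques already deployed in Section~\ref{sec:short-convolutions}.
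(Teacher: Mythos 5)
Your proposal is correct and follows the same route as the paper: verify that \(\FT\) fixes the generator \(\und{\calcr}_\pm\) by the explicit swap of \(\tilde{x}_0\) and \(\tilde{y}_0\) under \((X,g,Y)\mapsto(Y,g^{-1},X)\), and then invoke Theorem~\ref{thm:lin} for the anti-homomorphism property to handle an arbitrary braid word. The paper's own proof is in fact terser than yours --- it leaves the compatibility of \(\FT\) with the trace functor (including the framed stability locus and the bundle \(\calB\)) implicit, whereas you correctly flag it as a step requiring the homotopy techniques of Theorem~\ref{thm:lin}.
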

\begin{proof}
  The statement follows from the theorem~\ref{thm:lin} and the fact the properties of \(\und{\calcr}_+\).
  Indeed, \(\und{\calcr}_+\in \MF_{\tau,1}\) is the Koszul matrix factorization for the pair \(\tilde{x_0}\)
  and \(\tilde{y}_0\). The Fourier transform \(\FT\) switches \(\tilde{x_0}\)
  and \(\tilde{y}_0\) hence:
  \[\FT(\und{\calcr}_+)=\und{\calcr}_+\]
  and the statement follows.
\end{proof}

Next we discuss the easy involution on our categories that manifests itself in the equation \eqref{eq:reverse}.
We define an involution on the big space \(\und{\scX_2}\) by:
\[\mathfrak{I}(X,g_1,Y_1,g_2,Y_2)=(X,g_2,-Y_2,g_1,-Y_1).\]

This involution induces the isomorphism of categories:
\begin{proposition}\label{prop:I}
  The involution \(\mathfrak{I}\) induces the isomerism's of categories:
  \[\mathfrak{I}: \und{\MF}_{\sigma,\tau}\to \und{\MF}_{\tau,\sigma}\]
  such that
  \[\mathfrak{I}(\calF\star\calG)=\mathfrak{I}(\calG)\star\mathfrak{I}(\calF).\]
\end{proposition}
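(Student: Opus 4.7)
The plan is to verify the three ingredients of $\mathfrak{I}$ as a map of matrix factorization categories: preservation of the subvariety $\underline{\scX_2}$, behavior of the potential, and equivariance. First, the defining equation $\dia(Y_1)=\dia(Y_2)$ is symmetric and invariant under an overall sign change, so $\mathfrak{I}$ sends $\underline{\scX_2}$ into itself. Second, a direct computation from the formula \eqref{eq:pot1} gives
\[
\mathfrak{I}^*W_{\tau,\sigma}(X,g_1,Y_1,g_2,Y_2)=\Tr\bigl(X(\Ad_{g_2}(\tau\cdot(-Y_2))-\Ad_{g_1}(\sigma\cdot(-Y_1)))\bigr)=W_{\sigma,\tau}(X,g_1,Y_1,g_2,Y_2),
\]
so the pull back of the potential on the target matches the one on the source. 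Third, since $\mathfrak{I}$ exchanges the two copies of $G\times\frb$, it becomes $G\times B^2$-equivariant once the action is twisted by the involution $(b_1,b_2)\mapsto(b_2,b_1)$ on $B^2$; the $\Tqt$-weights are preserved since $-1$ has trivial torus weight. This already gives the first claim that $\mathfrak{I}$ is an equivalence $\underline{\MF}_{\sigma,\tau}\to\underline{\MF}_{\tau,\sigma}$.

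For the anti-homomorphism property I would extend $\mathfrak{I}$ to the triple space $\underline{\scX_3}=\frg\times(G\times\frb)^3$ by
\[
\mathfrak{I}_3(X,g_1,Y_1,g_2,Y_2,g_3,Y_3)=(X,g_3,-Y_3,g_2,-Y_2,g_1,-Y_1),
\]
which fixes the middle factor (in particular fixes $\frn^{(2)}$) and swaps the outer two. A short check gives the key compatibilities with the three projection maps in \eqref{eq:big-con}:
\[
\pi_{12}\circ\mathfrak{I}_3=\mathfrak{I}\circ\pi_{23},\quad \pi_{23}\circ\mathfrak{I}_3=\mathfrak{I}\circ\pi_{12},\quad \pi_{13}\circ\mathfrak{I}_3=\mathfrak{I}\circ\pi_{13},
\]
and the corresponding identity of potentials $\mathfrak{I}_3^*(\pi_{12}^*W_{\sigma,\tau}+\pi_{23}^*W_{\tau,\rho})=\pi_{23}^*W_{\tau,\sigma}+\pi_{12}^*W_{\rho,\tau}$. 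Together with base change along the square formed by $\pi_{13}$ and $\mathfrak{I},\mathfrak{I}_3$, and the graded symmetry of the underlying tensor product, this yields
\[
\mathfrak{I}^*(\calF\star\calG)=\pi_{13*}\bigl(\CE_{\frn^{(2)}}(\pi_{23}^*\mathfrak{I}(\calF)\otimes\pi_{12}^*\mathfrak{I}(\calG))^T\bigr)=\mathfrak{I}(\calG)\star\mathfrak{I}(\calF).
\]

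The step I expect to require the most care is the bookkeeping for the $B^3$-equivariant structure and the Chevalley--Eilenberg construction $\CE_{\frn^{(2)}}(-)^T$ under $\mathfrak{I}_3^*$: one must twist the $B^3$-action by the transposition $b_1\leftrightarrow b_3$, while leaving the middle Borel and its nilpotent $\frn^{(2)}$ untouched. Because $\mathfrak{I}_3$ acts trivially on the middle factor, the Chevalley--Eilenberg part and the $T^{(2)}$-invariants pass through $\mathfrak{I}_3^*$ without modification, and the outer swap is an inner automorphism of $B^3$, so the equivariant verification reduces to the two-factor check already performed. With these checks in hand, the two displayed identities above combine to give the stated anti-homomorphism property.
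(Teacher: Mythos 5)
Your verification is correct, and it is the natural argument: the paper states Proposition~\ref{prop:I} without proof (introducing $\mathfrak{I}$ as "the easy involution"), so there is nothing in the text to compare against, but the three checks you perform — invariance of the locus $\dia(Y_1)=\dia(Y_2)$, the sign cancellation $\mathfrak{I}^*W_{\tau,\sigma}=W_{\sigma,\tau}$ coming from the swap combined with the negation of the $Y$'s, and equivariance after twisting $B^2$ by the factor swap — together with the lift $\mathfrak{I}_3$ to $\und{\scX_3}$ and base change along $\pi_{13}\circ\mathfrak{I}_3=\mathfrak{I}\circ\pi_{13}$, are exactly what is needed. One small imprecision: $\mathfrak{I}_3$ does not fix the middle factor but sends $(g_2,Y_2)$ to $(g_2,-Y_2)$; since negation commutes with the adjoint $B^{(2)}$-action and with the torus, the $\CE_{\frn^{(2)}}(-)^{T^{(2)}}$ construction still passes through $\mathfrak{I}_3^*$ as you claim, so this does not affect the argument.
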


The  corresponding involution on the ring \(R(\beta)\) is given by:
\[\mathfrak{I}(x_i)=x_i,\quad \mathfrak{I}(y_i)=-y_i.\]
The Fourier transform \(\FTT\) is the composition of these two involutions:
\[\FTT=\FT\circ \mathfrak{I}.\]

Thus combining theorem \ref{thm:FTd} and proposition \ref{prop:I} we derive that for any
\(\beta\in\LBr_n\) we have:
\[\FTT(\mathcal{T}r(\beta))=\mathcal{T}r(\beta),\quad \HY(L(\beta))=\mathfrak{F}(\HY(L(\beta)))\]
and Theorems \ref{thm:symm}, \ref{thm:main2} and \ref{thm:obj} follow.

\subsection{Relation with HOMFLYPT homology}
\label{sec:relat-with-homflypt}

In our previous papers we worked with homology theory \(\mathrm{H}(\beta)\) which categorifies the HOMFLYPT
polynomial. In this section we explain how  the old homology theory can be obtained from the new
homology theory \(\HY(\beta)\).

Let us denote by \(R(\beta)_x\) the ring \(\cc[x_1,\dots,x_\ell]\) where \(\ell\) is number of connected components
of the closure \(L(\beta)\). The ring \(R(\beta)_x\) is a \(R(\beta)\)-module and in the next section we show
\begin{theorem}\label{thm:Hb}
  For any \(\beta\in \LBr_n\) we have the relation between the homology of
  the closure of the braid:
  \[\HY(\beta)\Ltimes_{R(\beta)}R(\beta)_x=\mathrm{H}(\beta).\]
\end{theorem}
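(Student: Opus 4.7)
The plan is to trace through both homology constructions in parallel and verify that the derived specialization at $y_i=0$ of the new theory reproduces the old one. By definition
\[
\HY(\beta)=\bigoplus_k\mathbb{H}\bigl(\und{\mathbb{S}}_\beta\oti \Lambda^k\calB\bigr),\qquad \und{\mathbb{S}}_\beta=\CE_{\frn}\bigl(\KN_{\sigma,\tau}(\forg(\und{\Phi^{br}}(\beta)))\bigr)^T,
\]
which lives over $\und{\widetilde{\Hilb}}_{1,n}(\delta)\subset \frb\ti G\ti\frb\ti V$. The old homology $\HH(\beta)$ of \cite{OblomkovRozansky16} is built analogously but from the smaller category $\MF_{G\ti B^2}(\scX,W)$, in which the $Y$-matrices lie in $\frn$, together with the framed Hilbert scheme $\wt{\Hilb}^{free}_{1,n}\subset\frb\ti \frn\ti V$. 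Geometrically, the specialization $R(\beta)\to R(\beta)_x$ annihilates the ideal $(y_1,\dots,y_\ell)$, so it corresponds to the closed embedding $\frh^\delta\ti\{0\}\hookrightarrow \frh^\delta\ti\frh^\delta$.

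Next I would use the comparison functor $\underline{i}^*:\uMF_{\bullet,\bullet}\to\MF$ from section~2.2, which intertwines the two convolution products and is by construction the pull-back along the embedding $\scX\hookrightarrow \ucx$ cutting out $\dia(Y)=0$. Using the inductive description of $\und{\calcr}_\beta$ from section~3 (via the elementary Koszul matrix factorization $\mathrm{K}^{\Wr_{\sigma,\tau}}[\tilde{x}_0]$, the induction functor $\overline{\ind}_{k,k+1}$ and the uniqueness lemmas~3.1--3.2 of \cite{OblomkovRozansky16}), and comparing with the parallel construction of $\bar{\calC}_\beta$ in \cite{OblomkovRozansky16}, one checks term by term that $\underline{i}^*(\und{\calcr}_\beta)\sim\bar{\calC}_\beta$.

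Third, I would propagate this identity to the trace by base change. The diagram
\[
\begin{tikzcd}
\wt{\Hilb}^{free}_{1,n}\arrow[r,hook]\arrow[d]& \und{\widetilde{\Hilb}}_{1,n}(\delta)\arrow[d,"\chi"]\\
\frh^\delta\ti\{0\}\arrow[r,hook]& \frh^\delta\ti\frh^\delta
\end{tikzcd}
\]
is cartesian, the horizontal arrows being cut out by $\dia(Y)=0$ and the vertical maps being the diagonal projections. Replacing $R(\beta)_x$ by its Koszul resolution in $(y_1,\dots,y_\ell)$ and pushing through $\chi_*\bigl(-\oti\Lambda^k\calB\bigr)$, derived base change -- together with the preceding step and the compatibility of the tautological bundle $\calB$ with the pull-back -- identifies $\HY(\beta)\Ltimes_{R(\beta)}R(\beta)_x$ with the complex computing $\HH(\beta)$. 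The normalizing shift $\mathrm{q}^{\mathrm{wr}(\beta)+n}$ is the same on both sides.

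The main obstacle is the second step: while $\underline{i}^*(\und{\calcr}_\pm)\sim\bar{\calC}_\pm$ is immediate, verifying compatibility of $\underline{i}^*$ with the $\stry$-convolution (as opposed to the $\bar{\star}$-convolution of \cite{OblomkovRozansky16}) requires checking that the equivariant Kn\"orrer functor $\Phi$ of section~2.2 commutes with $\underline{i}^*$ up to the extra Koszul piece coming from $\dia(Y)$. This should follow from a direct comparison of the projections $\bar{\pi}_{ij,y,\rho}$ and $\bar{\pi}_{ij}$: on the subvariety $\dia(Y)=0$ the extra $\Delta(Y)$-terms appearing in $\bar{\pi}_{ij,y,\rho}$ vanish, so the two convolutions agree on the restricted matrix factorizations.
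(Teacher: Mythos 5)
Your proposal is correct and follows essentially the same route as the paper: Section~5 develops the dichromatic restriction functors $i^*[\bar c,\cdot]$, proves their compatibility with the convolution (Proposition~\ref{prop:lbr}, Lemma~\ref{lem:lin1}) and with the trace via base change, and Theorem~\ref{thm:Hb} is exactly the specialization to the empty coloring $c=\emptyset$, i.e.\ the restriction to $\dia(Y)=0$ that you describe. Your resolution of the ``main obstacle'' --- that on this locus the projections $\bar{\pi}_{ij,y,\rho}$ and $\bar{\pi}_{ij}$ literally agree --- matches the paper's homotopy $U=\exp(-\sum_{i\in\sigma(c)}u_i)$ degenerating to the identity when $c=\emptyset$.
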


The  relation between the homology theories is especially simple in the case when \(L(\beta)\) is a knot, as it is
stated in theorem~\ref{thm:free}.
\begin{proof}[Proof of theorem~\ref{thm:free}]
  From the construction of the trace \(\mathcal{T}r(\beta)\), \(\beta\Br_n\) \(K=L(\beta)\) we see that the variables \(x,y\) in
  \(R(K)=\cc[x,y]\) are the sums of the diagonal elements \(x=\sum_{i=1}^n x_{ii}\), \(y=\sum_{i=1}^n y_{ii}\). Since these elements
  are not changed by a conjugation, the statement follows.
\end{proof}

Thus we have shown \eqref{eq:fd-hom} and the theorem~\ref{thm:main} follows.

\section{Two-colored  link invariants}
\label{sec:relation-with-other}

It is very likely that that all known triply-graded categorifications of the knot homology coincide or differ by
some trivial factor. However the case of links is more interesting, it is known that there are several link
invariants that are equal HOMFLYPT polynomial on the knots but differ from HOMFLYPT on the general links.
Thus we do not expect any kind of uniqueness of HOMFLYPT homology of links.

In this section we discuss the invariants of the links colored with two colors.  If link is a knot the
homology discussed here match with homology from the previous sections and homology from \cite{OblomkovRozansky16}.
For multi-component links we expect existence of the spectral sequence connecting these homology
with the dualizable link homology.

\subsection{Invariants of dichromatic links}
\label{sec:invar-dichr-links}

Since we will work with the links colored with two colors, we are lead to a definition of the groupoid
of dichromatic braids \(\Br_n^{\dic}\). An element of the groupoid is a triple \(s,\beta,t\), \(s,t\subset \{1,\dots,n\}\),
\(\beta\in \Br_n\) and \(\dot{\beta}(s)=t\). It is convenient to use notation \(\und{\beta}=(s,\beta,t)\)
and \(s(\und{\beta})=s, t(\und{\beta})=t\). The braids \(\und{\alpha}\cdot\und{\beta}\) is defined if \(t(\und{\alpha})=
s(\und{\beta})\).

Now we would like to construct an analogue of the homomorphisms \(\Phi^{\lbr}\) and \(\bar{\Phi}^{\lbr}\) for the dichromatic braids.
We define \(\frb[S]\), \(S\subset \{1,\dots,n\}\) to be a subspace of the algebra \(\frb\) defined by vanishing of \(i\)-th diagonal
entry if \(i\notin S\).
For given \(c,c'\subset \{1,\dots,n\}\) that we introduce  space spaces:
\[\scX_2[c,c']=\frg\ti G\ti \frb[c]\ti G\ti \frb[c'],\quad \overline{\scX}_2[c,c]=\frb[c]\ti G\ti\frb[c'].\]
 Respectively, we define the corresponding categories of matrix factorizations:
\[\MF[\sigma,c,\tau]=\MF_{G\ti B^2}(\scX_2[\sigma^{-1}(c),\tau^{-1}(c)],W_{\sigma,\tau}),\quad
  \overline{\MF}[\sigma,c,\tau]=\MF_{B^2}(\scXr_2[\bar{c},\tau^{-1}(c)],\Wr_{\sigma,\tau}).\]
where  \(W_{\sigma,\tau}\) \(\Wr_{\sigma,\tau}\) are given by the formulas \eqref{eq:pot1} and \eqref{eq:pot2}.

We have the relation between the potentials:
\begin{multline*}
W^\circ_{\sigma,\tau}(X,g,Y_1,Y_2)=\Wr(X',g,Y'_2)+\sum_{i\in  c}X_{ii}(Y_{1,\sigma(i),\sigma(i)}-\Ad_g(Y_2)_{\tau(i),\tau(i)})\\+\Tr(X_{--}((Y_1)_{++}+\Ad_g(Y_2)_{++})),\end{multline*}
where \(X',Y'_2\) are projections of \(X,Y'\) on \(\frb[\bar{c}]\) and
\(\frb[\tau(c)]\) respectively. Thus
just like in previous cases there is a Knorrer functors:
\[\begin{tikzcd}
\overline{\MF}[\sigma,c,\tau]\arrow[rr,shift left=1,"\Phi"]&& \MF[\sigma,c,\tau]\arrow[ll,shift left=1,"\Psi"]
\end{tikzcd}
\]
such that \(\Psi\circ\Phi=1\). We call the category \(\overline{\MF}[\sigma,c,\tau]\) a {\it short version} of
the category \(\MF[\sigma,c,\tau]\).

The  convolution space for the big category  in this setting is \(\scX_3[c]=\frg\ti (G\ti\frb[c])^3\) and
the  monoidal structure \(\star\) on the categories \(\MF[\bullet,\bullet,\bullet]\)
 is defined by the formulas
 \eqref{eq:big-con}:
 \[\star: \MF[\sigma,c,\tau]\ti \MF[\tau,c,\rho]\to \MF[\sigma,c,\rho].\]

 We can define the convolution structure  \(\bar{\star}\) on the category \(\overline{\MF}[\bullet,\bullet,\bullet]\)
 using Knorrer functors but we can also define the convolution in short category explicitly. To define the convolution
 few notations are needed. For  a subset \(c\subset \{1,\dots,n\}\) we generalize the upper-triangular truncation by
 \[X_{+c}=X_{++}+\Delta[c](X),\]
 where \(\Delta[c](X)\) is the diagonal matrix consisting of \(i\)-th, \(i\in c\) diagonal entries of \(c\).

 The modified projection maps \(\overline{\scX_3}[\rho,c]=\frb[\bar{c}]\ti G^2\ti\frb[\rho^{-1}(c)]\)
 to \(\overline{\scX_2}[\bar{c},\sigma^{-1}(c)]\), \(\overline{\scX_2}[\bar{c},\rho^{-1}(c)]\) and \(\overline{\scX_2}[\bar{c},\rho(c)]\)
 \[\bar{\pi}_{12}[\sigma,\rho,c](X,g_{12},g_{23},Y)=(X,g_{12},\Ad_{g_{23}}(\rho\cdot Y)_{\sigma(c)+}),\]
 \[\bar{\pi}_{23}[\sigma,\rho,c](X,g_{12},g_{23},Y)=(\Ad_{g_{12}}^{-1}(X)_{\bar{c}+},g_{23},Y),\]
 \[\bar{\pi}_{13}[\sigma,\rho,c](X,g_{12},g_{23},Y)=(X,g_{12}g_{23},Y).\]

 Thus we define the convolution in the short category by
 \[\calF\bar{\star}\calG=\bar{\pi}_{13*}[\sigma,\rho,c](\CE_{\frn^{(2)}}(\bar{\pi}_{12}^*[\sigma,\rho,c](\calF)\otimes_B \bar{\pi}_{23}^*[\sigma,\rho,c](\calG))^{T^{(2)}})\in\overline{\MF}[\sigma,c,\rho],\]
 for \(\calF\in \overline{\MF}[\sigma,c,\tau],\calG\in\overline{\MF}[\tau,c,\rho].\)

Many of the categories \(\overline{\MF}[\bullet,\bullet,\bullet]\) are isomorphic
 since for any \(\alpha\in S_n\) there is a natural functor:
 \begin{equation}\label{eq:shift}
   \alpha: \overline{\MF}[\sigma,c,\tau]\to \overline{\MF}[\sigma\alpha,\alpha^{-1}(c),\tau\alpha].\end{equation}


 The  the subset \(\Br_{n,k}^{\dic}\subset \Br_n^{\dic}\) of dichromatic braids \(\und{\beta}\) with \(|s(\und{\beta})|=k\) is
 a sub-algebroid. For any \(c\), \(|c|=k\) there is natural homomorphism the algebroids:
 \[\pi_c: \LBr_k\to \Br^{\dic}_{n,k},\]
 defined by \(\pi_c((s,\beta,t))=(s(c),\beta, t(c))\). Below we discuss the counter part of
 this homomorphism in convolution algebras of matrix factorizations.

 The inclusion map \(i[c,c']:\scXr_2[c,c']\to \und{\scXr_2}\) provides
 us the convolution algebra homomorphism. We will not need full
 generality of the intertwining statement, for our purposes it is
 enough to show the analog of lemma~\ref{lem:lin} in this setting.

 \begin{proposition}\label{prop:lbr}
   We have the commuting square of homomorphisms:
   \[
     \begin{tikzcd}
       \LBr_n\arrow[d,"\pi_c"]\arrow[r,"\Phi"]&(\uoMF_{\bullet,\bullet},\stry)\arrow[d,"i^*"] \\
       \Br^{\dic}_{n,k}\arrow[r,"\Phi"]&( \overline{\MF}[\bullet,c,\bullet],\star)
     \end{tikzcd},
   \]
   where \(i\) is an abbreviation for \(i[\bar{c},\bullet]\).
 \end{proposition}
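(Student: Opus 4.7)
The plan is to reduce the proposition to checking compatibility on the elementary generators of $\LBr_n$ and then to propagate this through the convolution and induction structures. Since the groupoid homomorphism $\bar{\Phi}^{\lbr}:\LBr_n\to\uoMF_{\bullet,\bullet}$ is generated, by theorem~\ref{thm:conv}, from the two-strand matrix factorizations $\und{\calcr}_\pm$ via the induction functor $\overline{\ind_k}$ and the convolution $\stry$, it suffices to show that the functor $i^*$ induced by the inclusion $i=i[\bar{c},\bullet]:\scXr_2[\bar{c},\bullet]\hookrightarrow\und{\scXr_2}$ is both monoidal (with respect to $\stry$ on the source and $\bar{\star}$ on the target) and sends the generators $\und{\calcr}_\pm$ to the analogous generators in the dichromatic category $\overline{\MF}[\bullet,c,\bullet]$.

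First I would verify the generator-level statement. By construction $\und{\calcr}_+$ is the Koszul matrix factorization attached to the pair $\tilde{x}_0,\tilde{y}_0$ with potential $\Wr_{1,\tau}=\tilde{x}_0\tilde{y}_0$. The pullback $i^*$ forces the diagonal entries of $X$ indexed by $\bar{c}$ (and of $Y$ indexed by $\sigma^{-1}(c)$, $\tau^{-1}(c)$ on the appropriate sides) to vanish, restricting the potential exactly to the dichromatic potential on $\scXr_2[\bar{c},\tau^{-1}(c)]$ described just before proposition~\ref{prop:lbr}. The Koszul generators $\tilde{x}_0,\tilde{y}_0$ restrict to the analogous linear functions on the dichromatic space, so $i^*(\und{\calcr}_\pm)$ is, up to the evident character twist, the two-strand generator of $\overline{\MF}[\bullet,c,\bullet]$. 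The same argument works for the inverses $\und{\calcr}_-$ since they differ only by a character twist that is preserved by $i^*$.

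Next I would check that $i^*$ intertwines the convolutions. This is the bulk of the argument. The convolution space $\overline{\scX_3}[\rho,c]=\frb[\bar{c}]\ti G^2\ti\frb[\rho^{-1}(c)]$ embeds into $\und{\scXr_3}$, and the dichromatic projections $\bar{\pi}_{ij}[\sigma,\rho,c]$ are by construction the restrictions of $\bar{\pi}_{ij,y,\rho}$, since setting diagonal entries of $X$ and $Y$ indexed by the complement of $c$ to zero is compatible with the explicit formulas for both sets of maps (the truncations $(\cdot)_{++}$ and $(\cdot)_{\bar{c}+}$, $(\cdot)_{\sigma(c)+}$ agree on the restricted loci). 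Thus by base change one has a natural morphism from $i^*$ of the $\stry$-convolution to the $\bar{\star}$-convolution of $i^*$ applied to the factors. The key technical point, which requires the same linearity hypothesis exploited in lemma~\ref{lem:lin}, is that on the elementary generators $\und{\calcr}_\pm$ all differentials are linear in the $\frb$-factors, so the equivariant Chevalley-Eilenberg and $B$-twisting corrections behave compatibly with $i^*$. The monoidality then follows by the same argument used to establish the homotopy \eqref{eq:Four} and to pass from $\stty$ to $\sttx$; the main obstacle is precisely this careful tracking of the $B^3$-equivariant enrichments and of the potential terms that disappear under restriction, ensuring that no spurious correction terms are introduced by $i^*$.

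Finally I would combine the two ingredients with the compatibility of the induction functors. Because $\overline{\ind_k}$ is built from $\bar{i}_k$ and $\bar{p}_k$, which restrict naturally to the dichromatic spaces once one specifies $c$, the pullback $i^*$ intertwines $\overline{\ind_k}$ on the top with the corresponding dichromatic induction on the bottom. Iterating over the generators ${}_{\tau_i}\sigma^{\pm 1}(i)_1$ of $\LBr_n$ and applying both the monoidality of $i^*$ and the compatibility with induction, one obtains
\[
i^*(\bar{\Phi}^{\lbr}(\beta))\cong\Phi^{\dic}(\pi_c(\beta)),
\]
which is the commutativity of the square. The principal difficulty, as above, is in carrying out the $B^3$-equivariant enrichment of $i^*$ in a way that matches the unique extension lemmas (Lemmas 3.1, 3.2 of \cite{OblomkovRozansky16}) used to characterize the generators $\und{\calcr}_\pm$ after restriction; once those are in place the rest is formal.
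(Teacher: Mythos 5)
Your overall architecture matches the paper's: the paper defines the bottom arrow $\Phi^{\dic}$ \emph{by} the square (so nothing needs to be checked on generators of the target category), and the entire content of the proposition is that $i^*=i[\bar c,\bullet]^*$ is a homomorphism of convolution algebras; this is established exactly as you propose, by an explicit homotopy for matrix factorizations with $\frb$-linear differentials (Lemma~\ref{lem:lin1}) combined with the reduction of an arbitrary braid to elementary generators via the iterated convolution formula, as in the proof of Theorem~\ref{thm:lin}. One imprecision is worth flagging: the dichromatic projections are \emph{not} literally the restrictions of $\bar{\pi}_{ij,y,\rho}$, and base change alone does not give the comparison. For instance $\bar{\pi}_{12,y,\rho}$ carries the diagonal term $\dia(Y)$, whereas $\bar{\pi}_{12}[\sigma,\rho,c]$ carries $\Delta[\sigma(c)]\bigl(\Ad_{g_{23}}(\rho\cdot Y)\bigr)$, and these disagree even on the restricted locus. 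That discrepancy in the pulled-back coordinate systems --- not the equivariant Chevalley--Eilenberg corrections, which are simply assumed to vanish --- is precisely what the homotopy $U=\exp\bigl(-\sum_{i\in\sigma(c)}u_i\bigr)$ of Lemma~\ref{lem:lin1} repairs, and it is here that the $\frb$-linearity of the differentials of $\und{\calcr}_\pm$ is genuinely used (so that the $u_i$ square to zero and anticommute). Since you do invoke that homotopy in the end, the argument closes; just relocate the difficulty from ``equivariant enrichment'' to ``mismatch of the diagonal parts of the two coordinate systems.''
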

 \begin{proof}
   We only need to show that \(i^*\) is a homomorphism of the convolution algebras, the bottom arrow of the
   square is defined by commutativity of the square.  We denote by \(i[\sigma,c]\)
 the natural embedding of \(\scXr_3[\rho,c]\) into \(\und{\scXr_3}\) then lemma below and the argument of the proof of \ref{thm:lin}  implies the statement.
  \end{proof}

  \begin{lemma}\label{lem:lin1}
    Let us assume that \(\calF=(M',D',\partial')\in \uoMF_{\sigma,\tau}\),
    \(\calG=(M'',D'',\partial'')\in \uoMF_{\tau,\rho}\) and that the differentials
    \(D',\partial',D'',\partial''\) are linear along the factors \(\frb\) in
    \(\und{\scXr_2}\). Then there is a  homotopy
    \[i^*[\rho,c](\bar{\pi}_{12,y,\rho}^*(\calF)
   \oti_B\bar{\pi}_{23,y,\rho}^*(\calG))\sim \bar{\pi}_{12}^*[\sigma,\rho,c](i^*[\bar{c},\sigma^{-1}(c)](\calF))
   \oti_B\bar{\pi}_{23}^*[\sigma,\rho,c](i^*[\bar{c},\rho^{-1}(c)](\calG)).\]
\end{lemma}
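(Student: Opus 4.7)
My plan is to mimic the proof of Lemma~\ref{lem:lin} by constructing an explicit exponential homotopy. The underlying modules on both sides of the claimed equivalence agree canonically as pulled-back bundles on $\scXr_3[\rho,c]$, so the content of the lemma is an intertwining between two differentials. These come from two a priori different pullbacks of $D'$ and $D''$: on the LHS one uses $\bar\pi_{12,y,\rho}$ and $\bar\pi_{23,y,\rho}$ composed with $i[\rho,c]$, while on the RHS one uses $\bar\pi_{12}[\sigma,\rho,c]$ and $\bar\pi_{23}[\sigma,\rho,c]$ on the already-restricted factorizations. The first step is to verify that, once restricted to $\scXr_3[\rho,c]$, these two sets of maps differ only in their treatment of the diagonal entries indexed by a specific discrepancy set $S$ determined by $c$, $\bar c$ and the permutations $\sigma,\rho$.

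For each index $i\in S$, following \eqref{eq:def-u} I would set
\[
u_i = \partial_{Y'_{ii}} D' \,\oti\, \partial_{X''_{ii}} D'',
\]
using the same coordinate notation as in the proof of Lemma~\ref{lem:lin}. The hypothesis that $D',\partial',D'',\partial''$ are linear along the $\frb$-factors forces the second $\frb$-derivatives of $D'$ and $D''$ to vanish, so the $u_i$ satisfy \eqref{eq:rel-u}: each $u_i$ squares to zero and distinct $u_i$'s anticommute. Hence $U = \exp\bigl(-\sum_{i\in S} u_i\bigr)$ is a well-defined invertible operator on the tensor product. Using the derivative identities \eqref{eq:derW} --- which express $[\partial_{Y'_{ii}} D', D']$ and $[\partial_{X''_{ii}} D'', D'']$ as differences of diagonal entries of $X$ and of its $\Ad_g$-conjugates --- conjugation by $U$ replaces the ``full-diagonal'' terms appearing in the LHS differential by the ``truncated-diagonal'' terms appearing in the RHS differential, and the equivariant correction pieces $\partial',\partial''$ are absorbed by an identical $\frb$-linearity argument.

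The main obstacle is the combinatorial bookkeeping: the permutations $\sigma,\tau,\rho$ transport the color $c$ to different index subsets $\bar c$, $\sigma^{-1}(c)$, $\rho^{-1}(c)$ on the three $\frb$-factors involved in the two projections, so one must carefully determine which diagonal indices $i$ correspond to a genuine discrepancy between the two pullback prescriptions, and check that the same-index pairing $\partial_{Y'_{ii}}D'\oti\partial_{X''_{ii}}D''$ in $u_i$ matches these discrepancies consistently across both the 12- and 23-factors. Once this combinatorial setup is fixed, the algebraic heart of the homotopy is exactly that of Lemma~\ref{lem:lin}, and no new ingredients are required beyond the restriction to the dichromatic subvariety.
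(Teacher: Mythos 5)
Your proposal follows essentially the same route as the paper's proof: the paper also constructs the homotopy as $U=\exp(-\sum u_i)$ with $u_i=\partial_{Y'_{ii}}D'\oti\partial_{X''_{ii}}D''$, derives $u_i^2=0$ and anticommutativity from $\frb$-linearity, and conjugates to intertwine the two pullback differentials. The only detail you leave open --- the identification of the discrepancy set $S$ --- is resolved in the paper by taking $S=\sigma(c)$ (after normalizing $\tau=1$ via \eqref{eq:shift}) and observing that $\sigma^{-1}\cdot X'_{ii}=0$ for $i\in\sigma(c)$ while $(\Ad_{g_{12}}^{-1}X')_{ii}$ is killed by $i^*[\rho,c]$.
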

\begin{proof}
  We can write an explicit formula for the homotopy. To simplify notations, we
  assume that all equivariant correction differentials vanish. Also we  set \(\tau=1\),
  because of  \eqref{eq:shift} this assumption is not restrictive.

  We also identify \(\calF\), \(\calG\) with the pull-backs
  \(  \bar{\pi}_{12,y,\rho}^*(\calF)\),\(\bar{\pi}_{23,y,\rho}^*(\calG)\) and we use notation \(\tilde{\calF}\) and
  \(\tilde{\calG}\) for the pull-backs \(\bar{\pi}_{12}^*[\sigma,\rho,c](i^*[\bar{c},\sigma^{-1}(c)](\calF))\) and \(\bar{\pi}_{23}^*[\sigma,\rho,c](i^*[\bar{c},\rho^{-1}(c)](\calG)).\)
    The convolution spaces \(\und{\scXr_3}\) and \(\scXr_3[\rho,c]\) have  standard coordinate
  systems \(X,g_{12},g_{23},Y\) and two other coordinate systems pulled back along the
  maps \(\bar{\pi}_{12,y,\rho}\), \(\bar{\pi}_{23,y,\rho}\)
  and \(\bar{\pi}_{12}[\sigma,\rho,c]\), \(\bar{\pi}_{23}[\sigma,\rho,c]\).
  We denote the last two coordinate system by \((X',g_{12},g_{23},Y')\) and \(X'',g_{12},g_{23},Y''\).

  By taking the derivatives
  of the potentials \(\Wr_{\sigma,1}\),
   \(\Wr_{1,\rho}\) we get:
   \begin{equation}\label{eq:diffs2}
   [\partial_{Y'_{ii}}D',D']=\sigma^{-1}\cdot X'_{ii}-(\Ad_{g_{12}}^{-1}X')_{ii},\quad
     [\partial_{X''_{ii}}D'',D'']=Y''_{ii}-(\Ad_{g_{23}}\rho\cdot Y'')_{ii}.\end{equation}
   Since all differentials and the potentials are \(\frb\)-linear by taking the second
   derivatives of the potentials along \(\frb\) we prove that operators
   \[u_i=\partial_{Y'_{ii}}D'\oti\partial_{X''_{ii}}D'',\]
   satisfy relations \(u_i^2=0\), \(u_iu_j+u_ju_i=0.\)
   Hence we can define the homotopy map by
   \[U=\exp(-\sum_{i\in\sigma(c)}^n u_i).\]
   Finally, let us observe that in  \eqref{eq:diffs2} \(\sigma^{-1}\cdot X'_{ii}=0\) for \(i\in\sigma(c)\) and
   \((\Ad_{g_{12}}^{-1}X')_{ii}\), \(i\in \sigma(c)\) is sent to zero by the pull-back \(i^*[\rho,c]\).
  Thus   combining all previous observations we arrive to
   \[U(D'\otimes I+I\oti D'') U^{-1}=\tilde{D'}\oti I+I\oti\tilde{D''},\]
   where \(\tilde{D'}\) and \(\tilde{D''}\) are the differentials of the pull-backs
   \(  \tilde{\calF}\),\(\tilde{\calG}\).
\end{proof}






 \subsection{Traces}
\label{sec:traces}

There is natural trace functor on the categories \(\overline{\MF}[\sigma,c,\tau]\). Similar to the previous construction
we define the embedding and the pull-back:
\[j_e[c',c'']:\frb[c']\ti \frb[c'']\to \scXr_2[c',c''],\quad j_e^*[c',c'']: \MF[\sigma,c,\tau]\to D^{per}_B(\frb[\bar{c}]\ti \frb[\tau^{-1}(c)]).\]
The set of diagonal elements of  the product  \( \frb[\bar{c}]\ti \frb[\tau^{-1}(c)]\) naturally isomorphic
to \(\frh\). Thus by taking the \(B\)-invariants and tensoring with the exterior powers of the  tautological bundle we arrive to
the functor:
\[\mathcal{T}r'_k:\MF[\sigma,c,\tau]\to D^{per}(\frh), \quad \mathcal{T}r'_k(\calF)=\CE_{\frn}(j_e^*(\calF)\oti \Lambda^k\calB)^T.\]

The  dichromatic braid \(\und{\beta}\) is called closable if \(s(\und{\beta})=t(\und{\beta})\). For  a closable braid \(\und{\beta}\) both sets
\(s(\beta)\) and \(t(\beta)\) are unions of the orbits of the action of \(\und{\dot{\beta}}\). Let us assume that \(\und{\beta}=\pi_c({}_\tau\beta_\sigma)\) then
 the identification between \(\frh\)
 and \(\frh[\bar{c}]\ti\frh[\tau^{-1}(c)]\) naturally extends to the identification between corresponding \(\dot{\beta}\)-invariant pieces of the spaces.
 We denote by \(\pi[\sigma,c,\tau]\) the corresponding  linear projection \(\frh^\delta\to \frh\) and for the triples \(\sigma,c,\tau\) such that \(\sigma(c)=\tau(c)\) we
 define:
 \[\mathcal{T}r_k:\MF[\sigma,c,\tau]\to D^{per}(\frh^\delta), \quad \mathcal{T}r_k=\pi_*[\sigma,c,\tau]\circ\mathcal{T}r'_k.\]
 where \(\delta=\tau^{-1}\sigma\).

 This last trace is compatible with the previously defined trace via pull-back morphism. Let \(\und{j}[\sigma,c,\tau]\)
 be the embedding of \(\frh^\delta=\frh[\bar{c}]^\delta\ti\frb[\tau(c)]^\delta\) inside \(\frh^\delta\ti \frh^\delta\), then we have
 \begin{proposition}
   For \(\calF\in \uoMF_{\sigma,\tau}\) and \(c\) such that \(\sigma(c)=\tau(c)\) we have the following relation between the traces:
   \[\und{j}^*[\sigma,c,\tau]\circ\mathcal{T}r_k(\calF)=\mathcal{T}r_k(i^*[\bar{c},\tau^{-1}(c)](\calF)).\]
 \end{proposition}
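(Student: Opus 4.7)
The plan is to reduce the identity to a base-change statement along a natural Cartesian square and then verify compatibility with each of the functors used to define $\mathcal{T}r_k$. On the full side the trace factors as $j_e^*$, the Knörrer periodicity $\overline{\KN}_{\sigma,\tau}$, the functor $\CE_{\frn}(-\otimes\Lambda^k\calB)^T$, and the projection $\frb^\delta\times\frb^\delta \to \frh^\delta\times\frh^\delta$; on the dichromatic side the same chain of operations is applied but relative to $\scXr_2[\bar c,\tau^{-1}(c)]$ and lands in $\frh^\delta$. Thus it suffices to show that the pullback $\und j^*[\sigma,c,\tau]$ intertwines the corresponding functors on the two sides.

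First I would record that $j_e$, $i[\bar c,\tau^{-1}(c)]$ and their dichromatic analogues fit into a Cartesian square, whose lower horizontal arrow is the inclusion $\frb[\bar c]\times\frb[\tau^{-1}(c)]\hookrightarrow\frb\times\frb$ cutting out the dichromatic diagonal. Flat base change then gives that restricting $j_e^*\calF$ to the dichromatic locus equals $j_e^*\circ i^*(\calF)$. Next, the Knörrer functor on the full side eliminates quadratic pairs $X_{ii}(Y_{1,ii}-Y_{2,ii})$ for every $i$, while on the dichromatic side only the indices $i\in\bar c$ contribute such pairs; for $i\in c$ the relevant diagonals are killed by the restriction, so the two Knörrer reductions agree canonically. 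The functors $\CE_{\frn}(-\otimes\Lambda^k\calB)^T$ are $B$-equivariant and compatible with flat pullback, hence commute with the restriction as well. Finally, the hypothesis $\sigma(c)=\tau(c)$ ensures that $\delta=\tau^{-1}\sigma$ preserves the partition $\{1,\dots,n\}=c\sqcup\bar c$, so $\frh^\delta$ sits inside $\frh^\delta\times\frh^\delta$ as the image of $\und j[\sigma,c,\tau]$, and the projection $\frh^\delta\times\frh^\delta\to\frh^\delta\times\frh^\delta$ restricts to the projection $\pi[\sigma,c,\tau]$ on the dichromatic side.

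The main obstacle I anticipate is controlling the equivariant correction terms $\partial$ of the curved complexes through each of these base-change steps, exactly in the spirit of Lemma~\ref{lem:lin1}. The strict differentials commute with pullback on the nose, but the $B$-equivariant corrections may require an explicit homotopy of the form $U=\exp(-\sum_{i\in c} u_i)$ built from partial derivatives of the differential of $\calF$ with respect to the diagonal variables that get killed by the restriction; since those variables vanish on the dichromatic locus and the potential $\Wr_{\sigma,\tau}$ restricts as required, the operators $u_i$ satisfy the same square-zero and anticommutation relations as in \eqref{eq:diffs2}, so the homotopy is well-defined and reduces the equivariant corrections on the two sides to a common canonical form. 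Assembling the four compatibilities then yields the stated identity.
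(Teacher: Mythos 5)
Your proposal is correct and follows essentially the same route as the paper: the paper organizes exactly these compatibilities into a single commutative diagram whose two traversals from $\und{\scXr_2}$ to $\frh^\delta$ give the two sides, with base change applied to the square containing the pushforwards $\pi_\delta$ and $\pi[\sigma,c,\tau]$, and your observation that the Knörrer reduction of the diagonal quadratic terms for $i\in c$ is replaced by the vanishing of those variables on the dichromatic locus is precisely what the paper encodes in the maps $j_{KN}$ and $\pi_\delta$. The only superfluous element is your anticipated homotopy $U=\exp(-\sum u_i)$: unlike in Lemma~\ref{lem:lin1}, here the relevant maps commute on the nose, so the correction differentials simply restrict and no homotopy is needed.
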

 \begin{proof}
   We claim that both sides of the equations are two ways to travel from lower right  corner of
   the commuting diagram to the upper left corner:
   \[
     \begin{tikzcd}
       \frh^\delta\arrow{d}{\und{j}[\sigma,c,\tau]}&\arrow{l}{\pi[\sigma,c,\tau]}\frh[\bar{c}]\ti\frh[\tau^{-1}(c)]\arrow{r}\arrow[d,dashed]&\frb[\bar{c}]\ti \frb[\tau^{-1}(c)]\arrow{rr}{j_e[\bar{c},\tau^{-1}(c)]}&&\scXr_2[\bar{c},\tau^{-1}(c)]\arrow{d}{i[\bar{c},\tau^{-1}(c)]}\\
       \frh^\delta\ti\frh^\delta&\frh^\delta\ti\frh\arrow{r}{j_{KN}}\arrow{l}{\pi_\delta}&\frh\ti\frh\arrow{r}&\frb\ti\frb\arrow{r}{j_e}&\und{\scXr_2}
            \end{tikzcd}.
   \]
To be more precise to get both sides of the equation we need to pull-back along the labeled
 arrows, with exception of the arrows with labels  \(\pi[\sigma,c,\tau]\) and \(\pi_\delta\), where we use push-forward, and apply the averaging functor \(\CE_\frn(\bullet)^T\) to in the reverse direction along the labels
 unmarked arrows.

 Maps \(j_{KN}\) is defined as an embedding of \(\frh^\delta\ti \frh\) into \(\frh\ti\frh\) defined with the help of identification
 between \(\frh[\bar{c}]^\delta\ti \frh[\tau^{-1}(c)]^\delta\) and \(\frh^\delta\) and \(\frh[c]\ti \frh[\tau^{-1}(\bar{c})]\).  We use the same
 identifications to define the projection \(\pi_\delta\). The  dotted arrow is given by the projection of
 \(\frh=\frh[\bar{c}]\ti\frh[\tau^{-1}(c)]\) onto \(\frh^\delta=\frh^\delta\ti 0\subset \frh^\delta\ti\frh\).

 The Knorrer functor from the definition of the trace is the composition
 of the functors \(\pi_{\delta*}\circ j_{KN}^*\). Thus indeed the RHS of the equation is obtained by tracing the
 diagram the way we described.
The fact that the LHS is obtain this way is immediate from the construction of the trace.

Thus to prove the equation we use the base change for the left commuting square of the diagram and commuting property
of right commuting square of the diagram.
 \end{proof}

\subsection{Relations between the invariants}
\label{sec:relat-betw-invar}

Using \eqref{eq:shift} we can define the monoidal structure on the union of categories
\(\overline{\MF}[\sigma,c]:=\overline{\MF}[\sigma,c,1]\) by
\[\calF\tilde{\star}\calG:=\calF\stry (\tau^{-1}\cdot \calG),\quad \calF\in\overline{\MF}[\sigma,c],\quad\calG\in \overline{\MF}[\tau,c'],\]
here \(c=\tau(c')\). Let us also point out that
\[\overline{\MF}[\sigma,c]=\MF_B(\frb[\bar{c}]\ti G\ti \frb[c],W)\]
does not depend on \(\sigma\) but this index participate in the definition of the monoidal structure.
As usual we use notation \(\overline{\MF}[\bullet,\bullet]\) for the union of these categories.

The  proposition~\ref{prop:lbr} implies that we have
\begin{corollary}
  There is a groupoid homomorphism:
  \[\Phi^{\dic}: \Br_n^{\dic}\to \MF[\bullet,\bullet]\]
  such that if \(\pi_c({}_1\beta_s)=\und{\beta}\) then
  \[\Phi^{\dic}(\und{\beta})=i^*[\bar{c},s(c)]\circ \Phi^{\lbr}({}_1\beta_s).\]
\end{corollary}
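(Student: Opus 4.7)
The plan is to define \(\Phi^{\dic}\) object\-wise by the displayed formula and then upgrade it to a groupoid homomorphism using Proposition \ref{prop:lbr}. First, I would observe that every dichromatic braid \(\und{\beta}=(s,\beta,t) \in \Br_{n,k}^{\dic}\) admits a labeled lift of the form \({}_1\beta_s \in \LBr_n\) by normalizing the left label to \(1\); taking \(c=t(\und{\beta})\) one checks that \(\pi_c({}_1\beta_s)=\und{\beta}\). The assignment
\[
\Phi^{\dic}(\und{\beta}) := i^*[\bar c,s(c)]\circ \Phi^{\lbr}({}_1\beta_s)
\]
then lands in \(\overline{\MF}[\bullet,c,\bullet]\), and via the shift functors \eqref{eq:shift} it may be regarded as an object of the unified category \(\overline{\MF}[\bullet,\bullet]\) carrying the monoidal structure \(\tilde{\star}\).

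Second, I would verify compatibility with composition. For a composable pair \(\und{\alpha}\cdot \und{\beta}\) in \(\Br_n^{\dic}\), lifting each factor to a labeled braid with trivial left label produces, after the relabeling used in Section \ref{sec:spec-unlab-braids} to pass from \(\stry\) to \(\tilde{\star}\), a composable labeled pair in \(\LBr_n\) whose product is a lift of \(\und{\alpha}\cdot\und{\beta}\). Proposition \ref{prop:lbr} then applies directly: its commuting square asserts that \(i^*\) intertwines the convolutions \(\stry\) and \(\star\), so
\[
\Phi^{\dic}(\und{\alpha})\star \Phi^{\dic}(\und{\beta}) \;=\; i^*\bigl(\Phi^{\lbr}({}_1\alpha_{s_1})\stry \Phi^{\lbr}({}_1\beta_{s_2})\bigr) \;=\; \Phi^{\dic}(\und{\alpha}\cdot \und{\beta}).
\]
Writing this out on the elementary generators reduces the check to the two-strand case, where it follows from the construction of \(\und{\calcr}_\pm\).

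The main obstacle is verifying that the formula is independent of the choice of \(c\): if \(\und{\beta}\) can be lifted as \(\pi_c({}_1\beta_s) = \pi_{c'}({}_1\beta_{s'})\) via two different colorings, the resulting objects a priori live in different subcategories \(\overline{\MF}[\bullet,c,\bullet]\) and \(\overline{\MF}[\bullet,c',\bullet]\), and must agree after applying the shift functor \eqref{eq:shift}. Since \(i^*[\bar c,s(c)]\) is natural with respect to the \(S_n\)-action permuting the diagonal coordinates on \(\frb\) (which is ultimately the reason Lemma \ref{lem:lin1} is formulated for arbitrary \(\sigma,\tau,\rho\)), this compatibility reduces to the same \(S_n\)-naturality already built into \(\LBr_n\). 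Modulo this bookkeeping, no new technical input beyond Proposition \ref{prop:lbr} is required, and the groupoid homomorphism \(\Phi^{\dic}\) is well defined.
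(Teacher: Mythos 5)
Your proposal is correct and follows the paper's (essentially one-line) argument: the corollary is deduced from Proposition~\ref{prop:lbr}, whose commuting square says that \(i^*\) intertwines \(\stry\) with \(\star\), combined with the fact that \(\Phi^{\lbr}\) is already a groupoid homomorphism. The only remark is that your third paragraph addresses a vacuous concern: normalizing the left label to \(1\) forces \(s=\dot{\beta}^{-1}\), so the lift \({}_1\beta_s\) is unique and \(c\) is determined by \(\und{\beta}\) as its color set, hence no independence-of-choice check is actually needed.
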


Using homomorphism from the corollary we define the trace on \(\Br_n^{\dic}\) by
\[\mathcal{T}r:\Br_{n;\ell}^{\dic}\to D^{per}(\cc^{\ell}),\quad \mathcal{T}r(\und{\beta}):=
  \mathcal{T}r(\Phi^{\lbr}({}_1\beta_s)),\]
where \(\pi_c({}_1\beta_s)=\und{\beta}\) and \(\Br_{n;k,\ell}^{\dic}\) is the set of braids
\(\und{\beta}\) with \(\ell\) connected components. The  trace is defined only for closable braids.

For  a closable braid \(\und{\beta}\in \Br_{n;\ell}\) we define the ring \(R(\und{\beta})=\cc[x_1,\dots,x_k,y_1,\dots,y_{\ell-k}]\)
where \(k\) is the number of \(\und{\dot{\beta}}\)-orbits on the set \(s(\und{\beta})\). The ring
\(R(\und{\beta})\) is a module over the ring \(R(\beta)=\cc[x_1,\dots,x_\ell,y_1,\dots,y_\ell]\) and
can summarize the results of this section in

\begin{theorem}
  For any closable \(\und{\beta}\in \Br_n^{\dic}\) the element
  \[\mathcal{T}r(\und{\beta})\in D^{per}(\Spec(R(\und{\beta})))\]
  is an isotopy invariant of the closure and
 \[\mathrm{T}r(\und{\beta})=\mathrm{T}r(\beta)\Ltimes_{R(\beta)}R(\und{\beta}).\]
\end{theorem}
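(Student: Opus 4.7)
The plan is to reduce both halves of the theorem to material already established in the paper: the monoidal comparison of Proposition~\ref{prop:lbr} (corollary formula $\Phi^{\dic}(\und{\beta})=i^*[\bar{c},s(c)]\circ\Phi^{\lbr}({}_1\beta_s)$), the trace-compatibility proposition saying $\und{j}^*[\sigma,c,\tau]\circ\mathcal{T}r_k(\calF)=\mathcal{T}r_k(i^*[\bar{c},\tau^{-1}(c)](\calF))$, and the Markov invariance of $\mathcal{T}r(\beta)$ proved in the previous section. I would tackle the tensor product formula first, since isotopy invariance then follows from it almost formally.

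\smallskip

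For the formula $\mathcal{T}r(\und{\beta})=\mathcal{T}r(\beta)\Ltimes_{R(\beta)}R(\und{\beta})$, fix a lift $\pi_c({}_1\beta_s)=\und{\beta}$ and unfold the definition on the dichromatic side to obtain, by the previous corollary, that $\Phi^{\dic}(\und{\beta})$ is the pull-back $i^*[\bar{c},s(c)]\Phi^{\lbr}({}_1\beta_s)$. Applying the trace-compatibility proposition then yields $\mathcal{T}r(\und{\beta})=\und{j}^*[\sigma,c,\tau]\,\mathcal{T}r(\beta)$ inside $D^{\mathrm{per}}(\frh^\delta)$. The final identification I need is that the closed embedding $\und{j}[\sigma,c,\tau]\colon\frh[\bar c]^\delta\times\frh[\tau^{-1}(c)]^\delta\hookrightarrow\frh^\delta\times\frh^\delta$ realises $\Spec R(\und{\beta})$ as the locus where the $x$-coordinates indexed by $\und{\dot\beta}$-orbits inside $c$ and the $y$-coordinates indexed by orbits outside $c$ vanish. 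Since this locus is cut out by a regular sequence of linear coordinates, the structure-sheaf pull-back $\und{j}^*$ is computed by the Koszul complex on these coordinates, and this Koszul complex is precisely a free $R(\beta)$-resolution of $R(\und{\beta})$. Hence $\und{j}^*\mathcal{T}r(\beta)=\mathcal{T}r(\beta)\Ltimes_{R(\beta)}R(\und{\beta})$, which gives the claimed formula.

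\smallskip

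For isotopy invariance I would use the formula just proved. Two dichromatic braid presentations $\und{\beta},\und{\beta}'$ of the same coloured link yield, after forgetting the colouring, braid presentations of the same uncoloured link; by the theorem of the previous subsection their uncoloured traces $\mathcal{T}r(\beta),\mathcal{T}r(\beta')$ are canonically identified in $D^{\mathrm{per}}$ of the (common) ring $R(\beta)=R(\beta')$ attached to the underlying link. The colouring determines a module $R(\und{\beta})$ over this ring that depends only on the colouring of components, not on the braid presentation. Applying $-\Ltimes_{R(\beta)}R(\und{\beta})$ transports the identification to the dichromatic traces, giving isotopy invariance. Conjugation in $\Br_n^{\dic}$ lifts to conjugation in $\LBr_n$, so Markov~I is automatic; Markov~II requires more care, because one must stabilise at a strand of the chosen colour, but the corresponding stabilisation in $\LBr_n$ is already handled by the proof for the uncoloured trace, and the pull-back $\und{j}^*$ commutes with the stabilisation arguments because the added strand contributes its own coordinate pair that is truncated identically on both sides.

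\smallskip

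The main obstacle is the naturality needed in the second paragraph: the identification $R(\beta)\cong R(\beta')$ has to be compatible with the colouring-induced quotient $R(\und{\beta})$, and the quasi-isomorphism $\mathcal{T}r(\beta)\simeq\mathcal{T}r(\beta')$ constructed via Markov moves must be shown to respect this identification, so that the derived tensor product produces a well-defined isomorphism of dichromatic traces rather than only an abstract quasi-isomorphism. Concretely, one must verify that the isomorphisms built from the Markov~II analysis in the proof of the preceding theorem (the Koszul contraction arguments involving $\pi_*$ and $\calO_n(-1)$) preserve the diagonal coordinates $x_{ii},y_{ii}$ that organise the $R(\beta)$-module structure, and hence descend through the Koszul resolution realising $\und{j}^*$. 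Once this compatibility is in place, both conclusions of the theorem follow.
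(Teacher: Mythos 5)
Your proposal is correct and follows essentially the same route the paper intends: the paper states this theorem as a summary of the section, with the proof implicitly being exactly your combination of Proposition~\ref{prop:lbr}, the trace-compatibility proposition for $\und{j}^*[\sigma,c,\tau]$, the identification of the derived pull-back along the linear coordinate subspace with $-\Ltimes_{R(\beta)}R(\und{\beta})$, and the Markov invariance of the uncoloured trace. Your final paragraph spells out the naturality checks the paper leaves tacit, which is a useful addition rather than a deviation.
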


The groupoid \(\Br_{n,0}^{\dic}\) consisting of the braid \(\und{\beta}\) with \(s(\und{\beta})=\emptyset\) is
equal to the braid group and the module of derived global sections of the trace is exactly the homology
from \cite{OblomkovRozansky16}:
\[R\Gamma(\mathcal{T}r(\und{\beta}))=\mathrm{H}(\beta).\]

Hence the theorem \ref{thm:Hb} follows.

 \section{Further directions and conjectures}
\label{sec:conjs}

\subsection{Relation with y-fied homology}
\label{sec:relation-with-y}

In the paper \cite{GorskyHogancamp17} the authors construct an isotopy invariant of the closure of a braid
\(L(\beta)\), they use notation \(\mathrm{HY}(\beta)\) for the invariant.
The  invariant \(\mathrm{HY}(\beta)\) has many features of the homology \(\HY(\beta)\). In particular
\(\mathrm{HY}(\beta)\) is naturally a module over the ring
\(\cc[x_1,\dots,x_\ell,y_1,\dots,y_\ell]\) where \(\ell\) is the number of connected components of the
closure.

The  authors of \cite{GorskyHogancamp17} also conjecture that their invariant is symmetric with respect
to the switch of \(x\) and \(y\)-variables. So it is natural to put forward

\begin{conjecture}
  \[\mathrm{HY}(\beta)=\HY(\beta).\]
\end{conjecture}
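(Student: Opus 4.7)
The plan is to establish the equivalence by constructing a comparison functor between the two underlying categorifications and then matching the resulting trace constructions. Both $\mathrm{HY}(\beta)$ and $\HY(\beta)$ are triply-graded modules over the same ring $R(\beta)=\cc[x_1,y_1,\dots,x_\ell,y_\ell]$, both carry an involution exchanging $x_i\leftrightarrow y_i$, and by Theorem~\ref{thm:Hb} (respectively by the $y_i\to 0$ specialization of \cite{GorskyHogancamp17}) both recover the HOMFLY-PT homology $\mathrm{H}(\beta)$ of \cite{OblomkovRozansky16}. The strategy is to upgrade the (conjectural, but well-understood in many cases) identification at $y=0$ to a full identification by deforming the comparison functor along the $y$-direction.

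First I would construct a monoidal functor $\Xi$ from the $y$-fied Soergel bimodule category used in \cite{GorskyHogancamp17} to the short monoidal category $(\uoMF_{\bullet,\bullet},\stty)$ of Section~\ref{sec:short-category}. The construction at $y=0$ should follow the lines of \cite{OblomkovRozansky16} (Soergel bimodule $\leftrightarrow$ matrix factorization over $\scX$); the new ingredient is to incorporate the additional $y$-variables as the diagonal entries of the second $\frb$-factor in $\uscXr=\frb\ti G\ti\frb$. The second step is to verify that $\Xi$ sends the $y$-fied Rouquier complex of the positive generator $\sigma(i)$ to our Koszul matrix factorization $\und{\calcr}_+^{(i)}$. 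This identification is already half-visible at the level of differentials: the factorization $\Wr_{1,\tau}=\tilde x_0\tilde y_0$ exhibited in Section~\ref{sec:two-strand-case} exactly mirrors the $x$- and $y$-differentials in the $y$-fied Rouquier complex, and Theorem~\ref{thm:lin} (Fourier self-duality of $\und{\calcr}_+$) matches the $x\leftrightarrow y$ symmetry predicted in \cite{GorskyHogancamp17}.

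The third and most delicate step is to compare the trace functors. The invariant of \cite{GorskyHogancamp17} is produced by a $y$-deformed Hochschild homology applied to the Rouquier complex of $\beta$, while our $\HY(\beta)$ is the derived global sections over $\und{\widetilde{\Hilb}}_{1,n}(\delta)$ of the trace $\mathcal{T}r(\beta)$ constructed in Section~\ref{sec:framed-category-link}. The plan is to show that under $\Xi$ the Hochschild trace is intertwined with the push-forward $\chi_*$ along the projection $\und{\widetilde{\Hilb}}_{1,n}(\delta)\to\frh^\delta\ti\frh^\delta$, twisted by the appropriate exterior powers of the tautological bundle $\calB$. A useful intermediate statement, which could then be checked independently, is that the Fourier involution of Theorem~\ref{thm:FTd} on our side corresponds to the conjectural $x\leftrightarrow y$ symmetry of \cite{GorskyHogancamp17} on their side; this would pin down a two-sided equivariance that rigidifies the comparison functor.

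The main obstacle is, unsurprisingly, the step inherited from the undeformed setting: constructing $\Xi$ and proving that it is monoidal already encodes the long-standing conjecture that the two HOMFLY-PT homology theories \cite{KhovanovRozansky08b} and \cite{OblomkovRozansky16} agree. Granted that input, the $y$-deformation is in some ways easier, because both sides satisfy strong rigidity properties: on the matrix factorization side, Lemmas~3.1 and 3.2 of \cite{OblomkovRozansky16} yield uniqueness of the lift of a Koszul complex to a matrix factorization for a given potential, while on the Gorsky-Hogancamp side the $y$-deformation is characterized by a universal property. Thus I expect that, conditional on the $y=0$ case, the deformation can be propagated by a formal argument using these uniqueness statements together with the matching of Fourier involutions established in Section~\ref{sec:four-transf-trace}.
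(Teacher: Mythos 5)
Note first that the statement you are proving is stated in the paper only as a conjecture; the paper offers no proof, so your proposal cannot be compared against one. Judged on its own terms, what you have written is a research program rather than a proof, and it contains a genuine gap that you yourself flag: the construction of the monoidal comparison functor $\Xi$ from the $y$-ified Soergel bimodule category of \cite{GorskyHogancamp17} to $(\uoMF_{\bullet,\bullet},\stty)$ already presupposes, at $y=0$, the equivalence of the triply-graded homology theories of \cite{KhovanovRozansky08b} and \cite{OblomkovRozansky16}. The paper explicitly states in the introduction that this equivalence ``has not been proven yet.'' A proof that is conditional on an open conjecture is not a proof, so the first and second steps of your plan cannot currently be carried out.

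Beyond that, the claim that the $y$-deformation ``can be propagated by a formal argument'' is too optimistic even granting the $y=0$ input. The uniqueness statements you invoke (Lemmas 3.1 and 3.2 of \cite{OblomkovRozansky16}) give uniqueness of the lift of a Koszul complex to a matrix factorization only when the underlying sequence is regular and the potential lies in the corresponding ideal; they rigidify individual objects such as $\und{\calcr}_{\pm}^{(i)}$, not the trace functor. Your third step --- matching the $y$-deformed Hochschild homology of \cite{GorskyHogancamp17} with the push-forward $\chi_*(\und{\mathbb{S}}_\beta\oti\Lambda^k\calB)$ along $\und{\widetilde{\Hilb}}_{1,n}(\delta)\to\frh^\delta\ti\frh^\delta$ --- is a substantive geometric comparison of two trace constructions, not a formal consequence of object-level rigidity, and the matching of the two Fourier involutions cannot ``rigidify'' the comparison since the $x\leftrightarrow y$ symmetry on the Gorsky--Hogancamp side is itself only conjectural (as the paper notes). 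So the proposal identifies the right structural parallels but does not close any of the three gaps it would need to close.
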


\subsection{Flag Hilbert schemes and pure braids.}
\label{sec:flag-hilbert-schemes}

The  pure braid group \(\PBr_n\) embeds inside \(\LBr_n\) as subset of elements \(\beta\) with
the property \(s(\beta)=t(\beta)=1\). In particular, we have homomorphism:
\[\Phi^{pbr}:\PBr_n\to \uoMF_{1,1}.\]

The  category \(\uoMF_{1,1}\) is very close to the category dg category \(\mathrm{DG}(FHilb_n)\) of the flag Hilbert scheme.
Indeed, in this case the potential is equal:
\[\Wr_{1,1}(X,g,Y)=\Tr(X(Y-\Ad_gY)).\]
In particular, if \(g\) is in the first formal neighborhood of \(1\) then the potential becomes
a standard potential for dg structure of \(FHilb\). This observation together with some explicit computations
from \cite{OblomkovRozansky18} motivate us to propose
\begin{conjecture}
  There is an \(A_\infty\)-deformation \((\mathcal{A}_\infty(FHilb_n),\tilde{\otimes})\) of
  \((\mathrm{DG}(FHilb_n),\otimes)\) such that
  \[(\mathcal{A}_\infty(FHilb_n),\tilde{\otimes})\simeq (\uoMF_{1,1},\stry).\]
\end{conjecture}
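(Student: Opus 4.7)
The plan is to build the equivalence by a Koszul-type duality argument localized along the diagonal $\{g=1\}\subset G$. Writing $g=\exp(\xi)$ with $\xi\in\frg$ in a formal neighborhood of the identity, the potential expands as
\[
\Wr_{1,1}(X,g,Y)=-\Tr(X[\xi,Y])-\tfrac{1}{2}\Tr(X[\xi,[\xi,Y]])-\cdots,
\]
whose leading piece (quadratic in the triple $\xi,X,Y$) is precisely the Chevalley--Eilenberg / Koszul differential computing $\mathrm{DG}(\mathrm{FHilb}_n)$ as a dg-category of complexes of coherent sheaves on the flag Hilbert scheme. I would first produce a canonical monoidal functor
\[
L:(\mathrm{DG}(\mathrm{FHilb}_n),\otimes)\to(\uoMF_{1,1},\stry)
\]
by sending the structure sheaf to the Koszul unit $\und{\calcr}_\parallel$, sending the tautological bundle $\calB$ and its exterior powers to their counterparts inside $\uoMF_{1,1}$ (these are already visible on the matrix-factorization side via the construction of $\HY$), and extending using the generation of $\mathrm{DG}(\mathrm{FHilb}_n)$ by such objects.

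Next, I would upgrade $L$ to an equivalence by computing the higher $A_\infty$ products through homological perturbation. The higher-order terms in the Taylor expansion of $\Wr_{1,1}$ above contribute curvature corrections to the endomorphism dg-algebra of $\und{\calcr}_\parallel$, and I would identify these with explicit operations $m_k$ built from the iterated Lie brackets $\ad_\xi^{k-1}$ acting on the Koszul resolution of the diagonal. These operations then define the $A_\infty$-deformation $\tilde{\otimes}$ on the flag Hilbert-scheme side. The equivalence would then be checked by Morita reduction to a generating collection of line/vector bundles (twisted units $\und{\calcr}_\parallel\langle\chi',\chi''\rangle$ and their exterior tensor products with $\calB$), where one can compute morphism spaces on both sides explicitly and match them using the unit property established earlier and the Knörrer functor $\Phi$ relating the short and long categories.

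Monoidal compatibility should follow from the matching on generators: the convolution $\stry$ restricted to the subcategory generated by the units and pure-braid matrix factorizations is controlled by the three-strand computations of Section~3 and by the associativity already proved for $\stry$, which I would compare term-by-term with the derived tensor product on $\mathrm{FHilb}_n$ using base change along the diagonal embedding and along the projections $\bar{\pi}_{ij,y,\rho}$.

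The main obstacle I anticipate is globalizing the equivalence away from the formal neighborhood of $g=1$. The expansion in $\xi=\log g$ only makes sense formally, so one must either prove a completion theorem showing that $\uoMF_{1,1}$ is determined by its formal behavior along $\{g=1\}$ (using that the potential $\Wr_{1,1}$ forces supports to concentrate there, at least after passing to singular support), or argue via the braid-group action of Theorem~3.1 that any object is a finite convolution of elementary generators whose behavior is already captured on the formal side. Controlling the curving terms so that the $m_k$ satisfy the $A_\infty$-relations, and checking signs consistently across the Knörrer reduction $\Phi\circ\Psi=1$, will be the bulk of the bookkeeping once this conceptual issue is resolved.
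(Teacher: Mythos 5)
The statement you are trying to prove is stated in the paper only as a \emph{conjecture}: the authors explicitly say that the observation about the potential $\Wr_{1,1}(X,g,Y)=\Tr(X(Y-\Ad_gY))$ restricting, on the first formal neighborhood of $g=1$, to the standard potential for the dg-structure on $\mathrm{FHilb}_n$, together with computations elsewhere, merely \emph{motivates} the conjecture. No proof is given in the paper, so there is no argument to compare yours against; the question is whether your proposal actually closes the conjecture, and it does not.

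The central gap is the one you yourself flag at the end: everything in your construction lives in the formal neighborhood of $g=1$ (the expansion $g=\exp(\xi)$, the identification of the quadratic part of the potential with the Koszul/Chevalley--Eilenberg differential, the higher products $m_k$ built from $\ad_\xi^{k-1}$), whereas $\uoMF_{1,1}$ is a category of $B^2$-equivariant matrix factorizations on all of $\frb\ti G\ti\frb$. You offer two possible remedies --- a completion/support theorem concentrating objects along $\{g=1\}$, or generation by braid-group elements --- but neither is carried out, and neither is obviously true: the braid generators $\und{\calcr}_\pm$ are Koszul factorizations supported on loci like $\{\tilde{x}_0=0\}$ that are not contained in $\{g=1\}$, so generation by elementary braids does not by itself reduce the problem to the formal neighborhood. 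Beyond that, the existence of the functor $L$ is asserted rather than constructed (you would need to check that the assignment on generators is compatible with all morphisms and with the convolution $\stry$, not just with objects), and the claim that the perturbation-theoretic $m_k$ satisfy the $A_\infty$-relations and assemble into a monoidal deformation $\tilde{\otimes}$ is exactly the content of the conjecture, not a consequence of the setup. What you have is a reasonable strategy sketch consistent with the heuristic the authors give, but it is not a proof, and the statement should be regarded as open.
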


\subsection{Quantization}
\label{sec:quantization}

The ring \(R(\beta)\) has a natural symplectic structure \(\omega\). In particular,
\(R(\beta)\) has a natural quantization \(D(\beta)\) as ring of differential operators on \(\cc^\ell\).
The invariants of two-colored links as we explain is a restriction of the invariant
\(\mathcal{E}(L(\beta))\) to the Lagrangian subvariety inside \(\Spec(R(\beta))\).

In theory
nothing prevents us from restricting \(\mathcal{E}(L(\beta))\) to any subvariety
inside \(\Spec(R(\beta))\) but seems to us that only restriction to
the Lagrangian (or isotropic)  subvariety descends to the level of braids, not just closures
of the braids. That motivates us to put forward somewhat speculative

\begin{conjecture}
  There is a complex of filtred \(D(\beta)\)-modules \(\tilde{\mathcal{E}}(L(\beta))\)
  such that the associated graded module recovers our invariant:
  \[\mathrm{gr}(\tilde{\mathcal{E}}(L(\beta)))=\mathcal{E}(L(\beta)).\]
\end{conjecture}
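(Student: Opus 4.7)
The plan is to lift the entire construction of $\mathcal{E}(L(\beta))$ from a filtered analogue of the categories $\uoMF_{\sigma,\tau}$. The target symplectic structure is $\omega=\sum_{i=1}^{\ell}dx_i\wedge dy_i$ with $x_i,y_i$ realized as sums of diagonal entries of $X,Y$ over the $i$-th $\dot{\beta}$-orbit. Under the standard order filtration on $D(\beta)$, one has $\gr D(\beta)=R(\beta)$, and the Fourier transform $\FTT$ of Section~\ref{sec:fourier-morphisms} quantizes to the Fourier transform for the Weyl algebra $D(\beta)$ (swapping positions with momenta), which is a strong consistency check that a filtered $D$-module lift is natural.

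First, I would construct filtered analogues $\uoMF^{\Fil}_{\sigma,\tau}$ of the short categories whose objects are curved complexes over the Rees algebra of $\cc[\uscXr]$ with respect to a suitable filtration along the $\frb\times\frb$ factors. The point is that the potential $\Wr_{\sigma,\tau}=\Tr(X(\sigma\cdot Y-\Ad_g(\tau\cdot Y)))$ is bilinear in the $X$- and $Y$-variables, so its Rees deformation can be interpreted as a central element in a Weyl-type quantization of $\cc[\uscXr]$; the Knörrer reduction from the long to the short category should be compatible with this quantization. Then I would lift the generating matrix factorizations $\und{\calcr}_\pm$ of Section~\ref{sec:braid-group-gener} to objects in $\uoMF^{\Fil}_{\bullet,\bullet}$ by writing down the Rees-Koszul complexes for $\tilde{x}_0$ and $\tilde{y}_0$; this is manageable because $\und{\calcr}_\pm^+$ are honest regular Koszul complexes.

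The main verifications would be that (i) the short convolution $\stry$ preserves this filtered structure, using that the maps $\bar{\pi}_{ij,y,\rho}$ are $\frb$-linear; (ii) the braid relations of Theorem~\ref{thm:conv} hold at the filtered level, by re-running the proof in the Rees setting (using that the key intermediate complexes $\und{\calcr}_{12}^+$ and $\und{\calcr}_{212}^+$ are built from regular sequences); and (iii) the trace $\mathcal{T}r$ of Section~\ref{sec:framed-category-link} together with the twist by $\Lambda^k\calB$ pushes the filtered object forward to a complex of filtered $D(\beta)$-modules, with associated graded recovering $\mathcal{E}(L(\beta))$. The Markov move invariance would again need to be re-derived in the filtered setting, but the cohomology computations on the fibers $\PP^{n-1}$ of the projection $\pi$ are insensitive to deformation by the filtration parameter and should carry over.

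The main obstacle will be step (ii): showing that the homotopies used in Theorem~\ref{thm:lin} and in the Lemma~\ref{lem:lin} (where one defines $u_i=\partial_{Y'_{ii}}D'\otimes \partial_{X''_{ii}}D''$) can be promoted to the quantized setting. In the commutative case the operators $u_i$ anticommute for free, but after quantization one picks up $[\partial_{X''_{ii}},\partial_{Y'_{ii}}]$-type correction terms, and these must either vanish or be absorbed into a higher homotopy. A secondary obstacle is canonicality: different choices of filtration on $\uoMF_{\sigma,\tau}$ could a priori give non-isomorphic lifts, so one must identify a preferred filtration (probably the one induced by the $\Cst$-weight on the $Y$-variables, which is the source of the second factor of $\Tqt$). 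I expect that once this filtration is fixed, the rest of the argument follows by carefully tracking Rees constructions through the proofs of Sections~\ref{sec:braid-group-action}--\ref{sec:link-invariant}.
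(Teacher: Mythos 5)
This statement is one of the speculative conjectures in the final section of the paper; the paper offers no proof of it (indeed it is explicitly introduced as ``somewhat speculative''), so there is no argument of ours to compare yours against. What you have written is a research program rather than a proof, and you say as much yourself: the decisive step is left open. Concretely, the gap is your step (ii). The entire braid-group action and the Fourier symmetry of $\mathcal{E}(L(\beta))$ rest on the explicit homotopies of Lemma~\ref{lem:lin}, where the operators $u_i=\partial_{Y'_{ii}}D'\otimes\partial_{X''_{ii}}D''$ square to zero and anticommute \emph{because} the underlying ring is commutative and the differentials are $\frb$-linear. In a Rees/Weyl quantization the relations $u_i^2=0$ and $u_iu_j+u_ju_i=0$ acquire corrections of order the filtration parameter, so $U=\exp(-\sum u_i)$ is no longer an intertwiner on the nose; without a concrete replacement (higher homotopies, an $A_\infty$-correction scheme, or a different normal ordering) the braid relations, and hence the very existence of the filtered lift $\tilde{\mathcal{E}}(L(\beta))$, are not established. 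Until that is resolved, nothing in the proposal produces the complex of filtered $D(\beta)$-modules whose existence is asserted.

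A secondary issue is that the conjecture asks for more than a Rees construction: the filtration on $\tilde{\mathcal{E}}(L(\beta))$ must be a \emph{good} filtration of genuine $D(\beta)$-modules so that $\gr$ lands back in $R(\beta)$-modules, and your candidate filtration (by $\Cst$-weight on the $Y$-variables) is the one already present in the $\Tqt$-equivariant structure of $\mathcal{E}(L(\beta))$; you would need to explain why quantizing along it produces an honest action of differential operators rather than merely a one-parameter deformation of the module structure. The paper's own heuristic for why such a lift should exist is different in flavor: it points to the torus-link case, where $\mathcal{E}(T_{n,k})$ is expected to arise from a complex of constructible sheaves on the Hitchin base via Riemann--Hilbert, which would supply the $D$-module structure by general theory rather than by re-running the matrix-factorization construction over a noncommutative base. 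Your bottom-up approach is reasonable and, if the homotopy problem in (ii) can be solved, would give a more explicit answer; but as written it does not close the conjecture.
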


In the case of torus links \(T_{n,k}\) the triply-graded homology have another interpretation in terms of
homology the homogeneous Hitchin fiber \cite{OblomkovYun16}. The homology of the homogeneous Hitchin fiber is  a fiber of
the complex of constructible sheaves on the Hitchin base. We expect that the complex of the D-modules \(\tilde{\mathcal{E}}(T_{n,k})\)
is the Riemann-Hilbert transform of the mentioned complex of constructible sheaves on the torus-fixed part of Hitchin base.


%
\end{document}